\tikzstyle{nodo}=[circle,draw,fill,inner sep=0pt,minimum size=%
\tikzstyle{infinito}=[circle,inner sep=0pt,minimum size=0mm]
\newcommand\R{{\mathbb R}}
\newcommand\N{{\mathbb N}}
\newcommand\T{\mathcal T}
\newcommand\dx{{\,dx}}
\newcommand{\E}{\mathbb{E}}
\newcommand{\V}{\mathbb{V}}
\newcommand\G{\mathcal G}
\newcommand\K{\mathcal K}
\newcommand\HH{\mathcal H}
\newcommand\NN{\mathcal N}
\newcommand\vv{\textsc{v}}
\theoremstyle{definition}
\theoremstyle{plain}
\newtheorem{theorem}{Theorem}[section]
\newtheorem{proposition}[theorem]{Proposition}
\newtheorem{lemma}[theorem]{Lemma}
\newtheorem{corollary}[theorem]{Corollary}
\theoremstyle{remark}
\newtheorem{remark}[theorem]{Remark}
\newtheorem*{remark*}{Remark}
\theoremstyle{definition}
\newtheorem{definition}[theorem]{Definition}
\newcommand\blfootnote[1]{%
	\begingroup
	\renewcommand\thefootnote{}\footnote{#1}%
	\addtocounter{footnote}{-1}%
	\endgroup
}
 \numberwithin{equation}{section}
\date{}
\title{Normalized solutions of $L^2$-supercritical NLS equations \\ on noncompact metric graphs}
\author{Simone Dovetta$^{1,*}$, Louis Jeanjean$^{2,\dagger}$, Enrico Serra$^{1,\ddagger}$ 
	\\ \ \\{\small$^1$Politecnico di Torino, Dipartimento di Scienze
		Matematiche ``G.L. Lagrange'' } \\ {\small
		Corso Duca degli Abruzzi, 24, 10129 Torino, Italy} \\ \ \\
		{\small$^2$Universit\'e  Marie et Louis Pasteur, CNRS, LmB (UMR 6623), F-25000 Besan\c{c}on, France}
		}
\begin{document}

\maketitle

\begin{abstract}
We consider the existence of normalized solutions to nonlinear Schr\"odinger equations on noncompact metric graphs in the $L^2$ supercritical regime. For sufficiently small prescribed mass ($L^2$ norm), we prove existence of  positive solutions on two classes of graphs: periodic graphs and noncompact graphs with finitely many edges and suitable topological assumptions. Our approach is based on mountain pass techniques.  
A key point to overcome the serious lack of compactness is to show that all solutions with small mass have positive energy. To complement our analysis, we prove that this is no longer true, in general, for large masses. To the best of our knowledge, these are the first results with an $L^2$ supercritical nonlinearity extended on the whole graph and unraveling the role of topology in the existence of solutions.
\end{abstract}

\blfootnote{$^*$simone.dovetta@polito.it}  \blfootnote{$^\dagger$louis.jeanjean@univ-fcomte.fr}  \blfootnote{$^\ddagger$enrico.serra@polito.it}

{\small \noindent \text{Statements and Declarations:} the authors have no relevant financial or non-financial interests to disclose.} \\
{\small \noindent \text{Data availability:} data sharing is not applicable to this article as no datasets were generated or analyzed during the current study.}

\section{Introduction}

\noindent Throughout the paper we consider metric graphs $\G=(\V_\G, \E_\G)$ such that
\begin{itemize}
	\item $\G$ is connected and has at most countably many edges;
	\item $\text{deg}(\vv)<\infty$ for every $\vv\in\V_\G$, where $\text{deg}(\vv)$ is the total number of edges incident at the vertex $\vv$;
	\item $e_0:=\displaystyle\inf_{e\in\E_\G}|e|>0$, where $|e|$ is the length of the edge $e$.
\end{itemize}
For simplicity, if a graph $\G$ satisfies these properties, we say that $\G$ is in the class  $\mathbf{G}$, and we write $\G\in\mathbf{G}$. The notion of metric graph is detailed in~\cite{BK}.
A graph in $\mathbf{G}$ is noncompact if at least one of its edges is unbounded (that is, the graph has at least one half-line) or if the total number of edges in the graph is infinite. It is evident that the set of all noncompact graphs in $\mathbf{G}$ is extremely wide and varied, as it contains structures that may exhibit sensibly different behaviours, making thus impossible to handle them all at once. For this reason, in the present paper we will focus on two specific families of noncompact graphs: graphs with {\em finitely many edges} (and thus with at least one half-line, see e.g. Figure \ref{fig:half}) and {\em periodic graphs} (see e.g. Figure \ref{fig:per}), i.e. graphs with infinitely many edges, whose length is uniformly bounded from above, arranged in a periodic pattern (for a rigorous definition of periodic graphs see e.g. \cite[Definition 4.1.1]{BK}). 

\begin{figure}[t]
	\centering
	\includegraphics[width=0.5\textwidth]{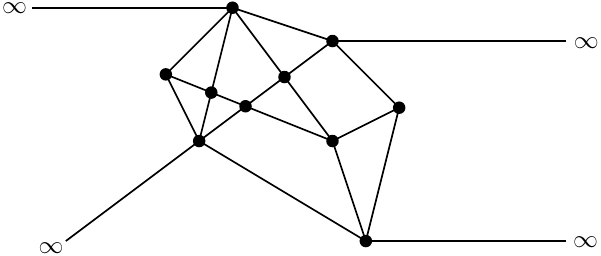}
	\caption{A noncompact graph in $\mathbf{G}$ with finitely many vertices and edges.}
	\label{fig:half}
\end{figure}

\begin{figure}[t]
	\centering
	\begin{tikzpicture}[xscale= 0.4,yscale=0.4]
		\draw (-17,3)--(-5,3); \draw[dashed] (-18,3)--(-17,3); \draw[dashed] (-5,3)--(-4,3); 
		\draw (-17,6)--(-5,6); \draw[dashed] (-18,6)--(-17,6); \draw[dashed] (-5,6)--(-4,6); 
		
		\node at (-17,3) [nodo] {}; \node at (-17,6) [nodo] {};
		\node at (-14,3) [nodo] {}; \node at (-14,6) [nodo] {};
		\node at (-11,3) [nodo] {}; \node at (-11,6) [nodo] {};
		\node at (-8,3) [nodo] {}; \node at (-8,6) [nodo] {};
		\node at (-5,3) [nodo] {}; \node at (-5,6) [nodo] {};
		
		\draw (-17,3)--(-17,6);
		\draw (-14,3)--(-14,6);
		\draw (-11,3)--(-11,6);
		\draw (-8,3)--(-8,6);
		\draw (-5,3)--(-5,6);

		\draw[step=2,thin] (0,0) grid (8,8);
		\foreach \x in {0,2,...,8} \foreach \y in {0,2,...,8} \node at (\x,\y) [nodo] {};
		\foreach \x in {0,2,...,8}
		{\draw[dashed,thin] (\x,8.2)--(\x,9.2) (\x,-0.2)--(\x,-1.2) (-1.2,\x)--(-0.2,\x)  (8.2,\x)--(9.2,\x); }
		\end{tikzpicture}
	\caption{Examples of periodic graphs.}
	\label{fig:per}
\end{figure}
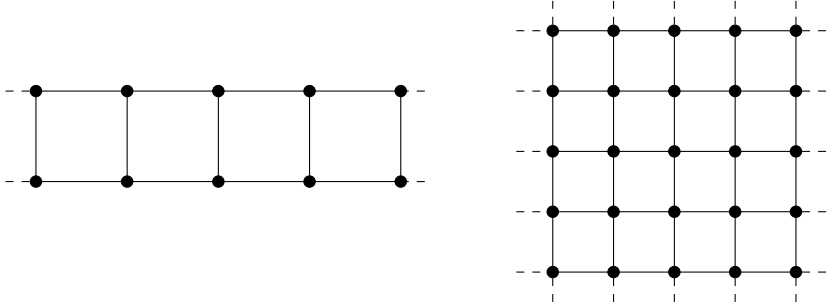

Given a graph $\G\in\mathbf{G}$ and $p>2$, we look for solutions $(u, \lambda)$ of the problem
\begin{equation}
	\label{nlsG}
	\begin{cases}
		u''+u^{p-1}=\lambda u & \text{on every }e\in\E_\G\\
		u\in H^1(\G),\quad u>0 &\text{on }\G\\
		\sum_{e\succ \vv}\frac{du}{dx_e}(\vv)=0 & \text{for every }\vv\in\V_\G\,,
	\end{cases}
\end{equation}
where $e\succ\vv$ indicates that the edge $e$ is incident at the vertex $\vv$, and $\frac{du}{dx_e}(\vv)$ denotes the derivative of $u$ at $\vv$ along the edge $e$ identified with the interval $[0,|e|]$ in such a way that the vertex $\vv$ corresponds to $0$.  

Our focus here is devoted to {\em normalized} solutions of \eqref{nlsG}, i.e. solutions $u$ in the mass constrained space
\[
H_\mu^1(\G):=\left\{v\in H^1(\G)\,:\,\|v\|_{L^2(\G)}^2=\mu\right\},
\]
in the $L^2$-{\em supercritical} regime $p>6$.  To this end, we look for critical points of the energy functional $E:H^1(\G)\to\R$
\[
E(u,\G):=\frac12\|u'\|_{L^2(\G)}^2-\frac1p\|u\|_{L^p(\G)}^p
\]
constrained to $H_\mu^1(\G)$. It is well-known that these critical points
satisfy \eqref{nlsG} (except possibly for the sign condition), for a suitable $\lambda$ that arises as a Lagrange multiplier associated with the mass constraint.

Recently, much effort has been devoted to establish the
existence of normalized solutions of nonlinear Schr\"odinger equations \eqref{nlsG} on metric graphs in the
$L^2$\emph{-subcritical} (i.e., $p\in(2,6)$) or $L^2$\emph{-critical
  regimes} (i.e., $p=6$). In these two cases, the energy functional
$E(\cdot, \G)$ is bounded from below and coercive on the mass
constraint (one needs to require $\mu$ smaller than a threshold when $p=6$).  Various results of existence, non-existence and multiplicity have been obtained (see e.g. \cite{ADST,AST_CMP,AST,ACT,BMP,BDL1,BDL2,DDGS,D,DTjmpa,KMPX,KNP,NP,Pankov,PS,PSV}).

Conversely, in the $L^2$-supercritical regime on general metric graphs,
i.e. when $p>6$, the energy functional $E(\cdot, \G)$ is always
unbounded from below on $H_\mu^1(\G)$ and the approaches developed when $p \leq 6$ are not directly applicable anymore. As a matter of fact, the existence of normalized solutions in the case $p>6$ only started to be considered very recently in \cite{CJS, CGJT, BCJS_Non} when the graph is compact or when the nonlinearity is restricted to a compact subset of the graph.

The main existence results of the paper are stated in the next two theorems.
Here, by {\em pendant} we mean a bounded edge with a vertex of degree 1, whereas by a {\em signpost} we mean the union of a bounded edge and a loop, glued together at a common vertex. 
\begin{theorem}
	\label{thm:exHalf}
	Let $\G\in\mathbf{G}$ be a noncompact metric graph with finitely many edges and at least one pendant or one signpost. Then, for every $p>6$ there exists $\mu_{p,\G}>0$, depending on $p$ and $\G$, such that, for every $\mu\in(0,\mu_{p,\G}]$, problem \eqref{nlsG} admits a solution $u\in H_\mu^1(\G)$ satisfying $E(u,\G) >0$ and $\lambda >0$.
\end{theorem}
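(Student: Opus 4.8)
The plan is to build a Palais--Smale sequence at a mountain pass level on the constraint $H^1_\mu(\G)$ and then recover compactness using the positivity of the energy of all small-mass solutions. First I would set up the geometry: because $E(\cdot,\G)$ is unbounded from below along the rescaling $u\mapsto t^{1/2}u(t\,\cdot)$ for $p>6$, one checks that for $\mu$ small the constrained functional has a mountain pass structure. Concretely, there exist $0<\rho<R$ such that $\inf_{\|u'\|_2=\rho}E(u,\G)>0$ while $E$ takes negative values on $H^1_\mu(\G)$ outside a large ball; this uses the Gagliardo--Nirenberg inequality on noncompact graphs in $\mathbf{G}$, which holds with a constant depending only on $e_0$, so that $\frac1p\|u\|_p^p\le C\|u'\|_2^{(p-2)/2}\mu^{(p+2)/4}$, and for $\mu$ small the quadratic term dominates near $\rho$. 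Define $c_{p,\G}$ as the corresponding mountain pass level over paths in $H^1_\mu(\G)$ joining a small sphere to a point with negative energy. A minimax argument (working with the auxiliary functional or directly with the scaling flow, as in \cite{CJS,CGJT}) produces a Palais--Smale sequence $(u_n)$ for $E$ restricted to $H^1_\mu(\G)$ at level $c_{p,\G}>0$; one may take $u_n\ge0$ and, via the monotonicity trick, also bounded, with the associated Lagrange multipliers $\lambda_n$ convergent.

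The second and decisive step is compactness. Boundedness of $(u_n)$ in $H^1(\G)$ gives a weak limit $u$, which is a (possibly zero) solution of \eqref{nlsG} with some multiplier $\lambda\ge0$, and $\|u\|_2^2=\mu-m$ for some mass loss $m\in[0,\mu]$. The serious lack of compactness is that $\G$ is noncompact, so mass can escape to infinity along the half-line(s) or, on a periodic graph, drift along the periodicity direction; vanishing must also be excluded. Here is where the pendant/signpost hypothesis and the key positivity fact enter. The idea is: if $m>0$, the escaping part of $u_n$ behaves like a solution on $\R$ (a soliton) carrying mass $\le m$ and \emph{negative} energy, since on the whole line the ground state at any mass in the supercritical regime has negative energy; splitting the energy as $E(u_n,\G)\to E(u,\G)+(\text{energy at infinity})$ would then force $E(u,\G)>c_{p,\G}$, and iterating/combining this with the strict subadditivity of the levels contradicts $E(u_n,\G)\to c_{p,\G}$. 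To make ``energy at infinity is negative'' rigorous one invokes precisely the earlier result of the paper that every solution with small mass has positive energy: any nontrivial weak limit $u$ is such a solution, so $E(u,\G)>0$, while the bubble at infinity contributes strictly negatively — these two facts are incompatible with a strictly positive mountain pass level unless there is no loss of mass, i.e. $m=0$. Vanishing ($u\equiv0$ and $m=\mu$) is excluded the same way, since then $c_{p,\G}=\lim E(u_n,\G)$ would equal the energy of a soliton of mass $\le\mu$ on $\R$, which is negative, contradicting $c_{p,\G}>0$. Hence $u_n\to u$ strongly in $H^1(\G)$, $\|u\|_2^2=\mu$, $E(u,\G)=c_{p,\G}>0$, and $u$ solves \eqref{nlsG}.

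It remains to upgrade $u$ to a genuine solution of \eqref{nlsG}: $u\ge0$ by construction, $u\not\equiv0$ since its energy is positive, and $u>0$ on all of $\G$ follows from the strong maximum principle / Harnack inequality along each edge together with the Kirchhoff conditions (a nonnegative $H^1$ solution vanishing at a point would vanish identically on the connected graph). The multiplier $\lambda=\lim\lambda_n$ satisfies $\lambda>0$: testing the equation with $u$ gives $\lambda\mu=\|u'\|_2^2-\|u\|_p^p$, and combining with $E(u,\G)=\frac12\|u'\|_2^2-\frac1p\|u\|_p^p>0$ and the Nehari-type identity (the critical point lies on the Nehari manifold, so $\|u'\|_2^2=\|u\|_p^p$, whence $\lambda\mu=\|u'\|_2^2-\|u\|_p^p$ must be re-examined) — more carefully, from $\|u'\|_2^2-\|u\|_p^p=\lambda\mu$ and the Pohozaev/Nehari relations valid for solutions on graphs with the given vertex conditions one deduces $\lambda>0$ whenever $E(u,\G)>0$ and $p>6$. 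Finally, since the construction works for all $\mu$ below the threshold coming from the positivity-of-energy result (intersected with the threshold for the mountain pass geometry), we obtain $\mu_{p,\G}>0$ as claimed.

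\medskip

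\textbf{Main obstacle.} The crux is the compactness step: quantifying the energy and mass carried away to infinity and showing the escaping profile is (asymptotically) a soliton on a half-line or line with \emph{strictly negative} energy, so that it cannot coexist with a strictly positive minimax level. This is exactly the point where the hypothesis of a pendant or signpost — which pins down the topology near the noncompact ends and, through the paper's earlier positivity result, forces $E>0$ on all small-mass solutions — does the essential work; without it the mass could leak off with negative energy and the mountain pass value would not be attained.
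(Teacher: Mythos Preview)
The core of your compactness argument rests on a sign error: you assert that the escaping profile on $\R$ has \emph{negative} energy, ``since on the whole line the ground state at any mass in the supercritical regime has negative energy.'' This is false. For $p>6$ the soliton $\phi_{\mu,1}$ satisfies $E(\phi_{\mu,1},\R)=\theta_p\,\mu^{2\beta+1}>0$ (see \eqref{eq:Emu}); indeed $E(\phi_{\mu,1},\R)\to+\infty$ as $\mu\to0$ because $2\beta+1<0$. Consequently your scheme for excluding both dichotomy and vanishing collapses: in each case the intended contradiction with $c_{p,\G}>0$ never materialises, since every piece in the energy splitting is \emph{positive}. The same error reappears in your vanishing step (``the energy of a soliton of mass $\le\mu$ on $\R$, which is negative'').

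The mechanism in the paper is the opposite one. For dichotomy (Lemma~\ref{lem:dicHalf}), both the weak limit $u$ (via Proposition~\ref{asympt}/Remark~\ref{rem:estErho}) and each escaping bubble carry positive energy, and the point is that their \emph{sum} strictly exceeds $c_\rho(\R)$ --- a superadditivity coming precisely from $2\beta+1<0$ --- which contradicts the upper bound $c_\rho(\G)\le c_\rho(\R)+o(c_\rho(\R))$ of Proposition~\ref{prop:level}. Vanishing is excluded by a different inequality: if all the mass runs off along the half-lines one obtains $c_\rho(\G)\ge c_\rho(\R)$, whereas the pendant/signpost hypothesis is used exactly to secure the \emph{strict} bound $c_\rho(\G)<c_\rho(\R)$ (Proposition~\ref{prop:level}). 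Thus the topological assumption feeds into the level estimate, not into the positivity result, which holds for any noncompact graph with finitely many edges. Two further gaps: you do not explain why the limiting multiplier satisfies $\lambda\ge0$ --- in the paper this requires the approximate Morse index information of Theorem~\ref{thm:PS} combined with Lemma~\ref{lem:dim3} --- and your argument for $\lambda>0$ via a ``Nehari identity $\|u'\|_2^2=\|u\|_p^p$'' is incorrect (that identity does not hold on graphs); the paper obtains $\lambda>0$ directly from Lemma~\ref{Properties_L}(c), since a positive $H^1$ solution on a half-line forces $\lambda>0$.
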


\begin{theorem}
	\label{thm:exper}
	Let $\G\in\mathbf{G}$ be a periodic graph. Then, for every $p>6$ there exists $\mu_{p,\G}>0$, depending on $p$ and $\G$, such that, for every $\mu\in(0,\mu_{p,\G}]$, problem \eqref{nlsG} admits a  solution $u\in H_\mu^1(\G)$ satisfying $E(u,\G) >0$ and $\lambda >0$.
\end{theorem}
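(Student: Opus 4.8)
The plan is to obtain $u$ as a mountain pass critical point of the energy $E(\cdot,\G)$ constrained to $H_\mu^1(\G)$. Since $E(\cdot,\G)$ is unbounded from below on $H_\mu^1(\G)$ when $p>6$ and the embedding $H^1(\G)\hookrightarrow L^p(\G)$ is not compact, the whole difficulty lies in producing a \emph{bounded} Palais--Smale sequence and in recovering compactness, and the latter is handled by combining the periodicity of $\G$ with an a priori positivity estimate. Throughout I would work with nonnegative functions.

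\textbf{Step 1: a priori positivity.} First I would prove the estimate announced in the abstract: there is $\mu_{p,\G}>0$ such that every solution $(u,\lambda)$ of \eqref{nlsG} with $\|u\|_{L^2(\G)}^2\le\mu_{p,\G}$ satisfies $\lambda>0$ and $E(u,\G)\ge\sigma\big(\|u\|_{L^2(\G)}^2\big)>0$, where $\sigma(m)$ is, up to constants, comparable to the level $c_m^{\R}$ of the analogous problem on $\R$ (the soliton energy at mass $m$), so that $\sigma$ is decreasing in $m$. This follows by testing \eqref{nlsG} with $u$, which --- the Kirchhoff conditions killing the vertex terms --- yields $\|u'\|_{L^2(\G)}^2+\lambda\mu=\|u\|_{L^p(\G)}^p$, and then combining the Gagliardo--Nirenberg inequality on $\G$ with the $L^\infty$ bound on solutions available in the small mass regime. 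I regard this step as the main obstacle; the rest is a by now fairly standard min--max and concentration--compactness scheme.

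\textbf{Step 2: mountain pass geometry and a bounded Palais--Smale sequence.} For $\mu$ small, Gagliardo--Nirenberg gives $E(u,\G)\ge\tfrac12 s^2-c\,\mu^{(p+2)/4}s^{(p-2)/2}$ with $s=\|u'\|_{L^2(\G)}$; as $(p-2)/2>2$, the right-hand side is bounded below by a positive constant on the sphere $\{s=\rho_\mu\}$ and tends to $0$ as $s\to0$. On the other hand, spreading the mass over arbitrarily many cells of $\G$ produces functions in $H_\mu^1(\G)$ with arbitrarily small gradient, while concentrating it on a small subinterval of a single edge produces functions with arbitrarily negative energy (here $p>6$ is essential). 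Hence $E(\cdot,\G)$ has a mountain pass geometry on $H_\mu^1(\G)$ and $c_\mu:=\inf_{\gamma}\max_t E(\gamma(t),\G)$ is bounded below by the positive sphere value. Next I would produce a bounded Palais--Smale sequence $(u_n)\subset H_\mu^1(\G)$ at level $c_\mu$: since the rescalings $u\mapsto\sqrt\theta\,u(\theta\cdot)$ do not act on $\G$, this requires adapting Jeanjean's monotonicity/Pohozaev argument, e.g.\ by localizing the scaling inside the edges, so that $(u_n)$ is bounded in $H^1(\G)$, $E(u_n,\G)\to c_\mu$, $(u_n)$ is Palais--Smale for $E(\cdot,\G)$ restricted to $H_\mu^1(\G)$, and the Lagrange multipliers $\lambda_n$ are bounded. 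Using a single rescaled soliton supported in one edge as a competitor path, I would also record $c_\mu\le c_\mu^{\R}+o(1)$ as $\mu\to0$.

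\textbf{Step 3: compactness via periodicity.} Since $c_\mu>0$, $(u_n)$ cannot vanish: otherwise a Lions-type lemma on metric graphs would force $\|u_n\|_{L^p(\G)}\to0$, hence $E(u_n,\G)\to0$. Translating by elements of the period group of $\G$ --- which leave $E(\cdot,\G)$, the constraint and \eqref{nlsG} invariant --- we may thus assume $u_n\rightharpoonup u\not\equiv0$ in $H^1(\G)$ along a subsequence, with $u$ a nonnegative solution of \eqref{nlsG} for $\lambda=\lim\lambda_n$. To see that $\|u\|_{L^2(\G)}^2=\mu$, note that by the Brezis--Lieb lemma on $\G$ one has $\mu=\|u\|_{L^2(\G)}^2+\lim_n\|u_n-u\|_{L^2(\G)}^2$ and $c_\mu=E(u,\G)+\lim_n E(u_n-u,\G)$, while $(u_n-u)$ is again a bounded Palais--Smale sequence; iterating the translation argument, whatever mass is lost reappears as finitely many nontrivial solutions of \eqref{nlsG} with masses $m_1,\dots,m_k\le\mu$ adding up to $\mu-\|u\|_{L^2(\G)}^2$. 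By Step 1 each of $u$ and these profiles has energy at least $\sigma(\cdot)$ at its own mass, so $c_\mu\ge\sigma\big(\|u\|_{L^2(\G)}^2\big)+\sum_j\sigma(m_j)$; since $\sigma$ is comparable to the strictly decreasing, strictly subadditive function $m\mapsto c_m^{\R}$ (strict subadditivity holding precisely because $p>6$), this is incompatible with $c_\mu\le c_\mu^{\R}+o(1)$ for $\mu$ small unless there is no loss of mass. Hence $u_n\to u$ in $L^2(\G)$, and testing the Palais--Smale condition with $u_n-u$ upgrades this to convergence in $H^1(\G)$; in particular $u\in H_\mu^1(\G)$ and $E(u,\G)=c_\mu>0$.

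\textbf{Step 4: conclusion.} The limit $u\in H_\mu^1(\G)$ solves $u''+u^{p-1}=\lambda u$ on every edge together with the Kirchhoff conditions, and $u\ge0$. By the strong maximum principle on each edge and connectedness of $\G$, in fact $u>0$: if $u(\vv)=0$ at some vertex, each incident edge would attain its minimum there, forcing all derivatives $\frac{du}{dx_e}(\vv)\ge0$; the Kirchhoff condition would then make them all vanish, and ODE uniqueness would give $u\equiv0$, a contradiction. Finally $E(u,\G)=c_\mu>0$ and, as $\mu\le\mu_{p,\G}$, $\lambda>0$ by Step 1. The same scheme proves Theorem~\ref{thm:exHalf}, the single-edge competitor in Step 2 being replaced by one living on the pendant or signpost (which also provides, through a reflected half-soliton, the room needed for the mountain pass path there).
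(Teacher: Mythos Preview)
Your outline matches the paper's architecture: mountain pass geometry (Lemma~\ref{lem:MPgeom}), the a priori lower bound on the energy of small-mass solutions (Proposition~\ref{asympt}, your Step~1), periodicity to prevent escape of mass, and a profile decomposition to exclude splitting (Lemma~\ref{lem:dicPer}, your Step~3). There is, however, a real gap in Step~3, at the point where you exclude vanishing.

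You write ``$\|u_n\|_{L^p(\G)}\to 0$, hence $E(u_n,\G)\to 0$''. This implication is unjustified: from $\|u_n\|_p\to 0$ one only obtains $E(u_n,\G)=\tfrac12\|u_n'\|_2^2+o(1)$, while the Palais--Smale identity $\|u_n'\|_2^2+\lambda_n\mu=\|u_n\|_p^p+o(1)$ then forces $\|u_n'\|_2^2\to 2c_\mu>0$ and $\lambda_n\to -2c_\mu/\mu<0$. Nothing you have established rules this out; your route to $\lambda\ge 0$ passes through Lemma~\ref{Properties_L} applied to a nontrivial weak limit, but in the vanishing scenario there is no such limit, so the argument is circular. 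The paper closes this gap by using not the plain monotonicity trick but the refined version of \cite{BCJS_TAMS} (Theorem~\ref{thm:PS} here): working with the family $E_\rho(u,\G)=\tfrac12\|u'\|_2^2-\tfrac{\rho}{p}\|u\|_p^p$, for almost every $\rho\in[1/2,1]$ one obtains a bounded Palais--Smale sequence carrying the approximate Morse index bound $\widetilde m_{\zeta_n}(u_n)\le 1$. This extra information, fed into Lemma~\ref{lem:dim3} together with the three-dimensional test space built from disjoint periodic translates of a single compactly supported low-frequency function (Lemma~\ref{lem:Perdim3}), yields $\liminf_n\lambda_n\ge 0$ \emph{a priori}, and it is precisely this that excludes vanishing. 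Your Step~2 gestures at Jeanjean's argument but does not recognise that on graphs one needs this Morse-index-carrying variant; ``localizing the scaling inside the edges'' would at best give boundedness, not the index control. With $\lambda_\rho\ge 0$ in hand for a.e.\ $\rho$, the paper then proceeds essentially as you describe to obtain a critical point $u_\rho$, and finally passes to the limit $\rho\to 1^-$ along the actual solutions $u_\rho$ (a second compactness step), rather than working directly at $\rho=1$.
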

Theorems \ref{thm:exHalf}--\ref{thm:exper}  are the first results on the existence of normalized solutions to \eqref{nlsG} when $p$ is $L^2$-supercritical, the graph $\G$ is noncompact, and the nonlinearity acts on the whole of $\G$ (whereas without the mass constraint some results can be found e.g. in \cite{CDPS,DDGST}). The only results for supercritical $p$ obtained so far \cite{BCJS_Non,CGJT,CJS} either require $\G$ to be compact or the nonlinearity to be localized on the compact core of the graph. The solutions found in those papers are critical points of mountain pass type for the energy $E(\cdot,\G)$ in the mass constrained space $H_\mu^1(\G)$, and the assumptions on the compactness of the graph or on the localization of the nonlinearity are essentially used to obtain a minimum of compactness needed to prove that Palais--Smale sequences converge. Such an issue, together with the very boundedness of these sequences, is one of the major problems when dealing with supercritical problems. We recall that arguments based on scaling, employed for example for the NLS equation on $\R^N$, are not available on metric graphs since these are not scale invariant.

The solutions found in Theorems \ref{thm:exHalf}--\ref{thm:exper} are again of mountain pass type for $E(\cdot,\G)$ in $H_\mu^1(\G)$, in the same spirit of \cite{BCJS_Non,CJS}. In our results, however, we manage to avoid all compactness assumptions on $\G$ and on the nonlinearity, replacing them with the requirement that the prescribed mass of the solutions we seek be small and exploiting specific topological features of the graphs under exam.

Let us give some elements of our strategy.  As in \cite{BCJS_Non,CJS}, we shall obtain the solutions of Theorems \ref{thm:exHalf}--\ref{thm:exper} as a limit of a sequence of solutions of approximating problems.  To this aim, we first consider the family of functionals $E_\rho:H^1(\G)\to\R$  given by
\[
E_\rho(u,\G):=\frac12\|u'\|_{L^2(\G)}^2-\frac\rho p\|u\|_{L^p(\G)}^p\, \quad \text{where} \quad \rho \in [1/2,1].
\]
The functionals $E_\rho(\cdot,\G)$ have a mountain pass geometry and adapting \cite[Theorem 1.10]{BCJS_TAMS} to our context we show that, for almost every $\rho \in [1/2,1]$, $E_\rho(\cdot,\G)$ admits a {\it bounded} Palais-Smale sequence at the mountain pass level with certain Morse index-type properties.
We prove that this sequence converges and that its limit is a critical point $u_{\rho}$ of $E_\rho(\cdot, \G)$ constrained to $H_\mu^1(\G)$. We then consider a sequence $\rho_n \to 1$ and show that the sequence of corresponding solutions $u_{\rho_n}$ converges to a critical point $u \in H_\mu^1(\G)$ of $E(\cdot, \G)$, which is the solution we are looking for in Theorems \ref{thm:exHalf}--\ref{thm:exper}. 

Whether it is to derive the existence of $u_{\rho}$, for almost every $\rho \in [1/2,1]$, or later in the analysis of the sequence $(u_{\rho_n})_n $, we are led to study the convergence of such Palais-Smale sequences. 
The noncompactness of the domain and the information on the Morse index guarantee that the corresponding Lagrange multipliers are nonnegative. Then, in order to recover the necessary compactness, we rule out the possible partial loss of mass at infinity proving that, on every noncompact graph, all solutions of \eqref{nlsG} have positive energy. This result, which is perhaps of independent interest, requires the small mass assumption and is given in Proposition \ref{asympt}. To conclude, we are  left to exclude that the whole mass runs away at infinity. This is accomplished either exploiting the invariance under translations on periodic graphs, or  suitable topological assumptions on graphs with finitely many edges. Noteworthy, for this final step to be completed, the previous conditions must be combined again with the small mass assumption.

We wish to notice that both the requirement of small mass and on the topology of the graph are far from arbitrary or technical, but reflect instead important features of the NLS equation on graphs. Indeed, it seems to us that existence results for mountain pass type solutions with the nonlinearity acting on the whole structure are likely not to hold on general noncompact graphs when even just one of our assumptions is dropped.

Consider first the topological assumptions we exploited: in Theorem \ref{thm:exHalf} the presence of a {\em pendant} or a {\em signpost}, and in Theorem \ref{thm:exper} the periodicity of $\G$. As already said, these kind of assumptions have been deeply investigated in the subcritical and critical cases (see e.g. \cite{ADST,AST_CMP,AST,ACT,BMP,BDL1,BDL2,DDGS,D,DTjmpa,KMPX,KNP,NP,Pankov}) to obtain existence or nonexistence of ground states and of more general solutions. This is, on the contrary, the first time that topological properties of the graphs are shown to play a role in the supercritical regime. Here they are crucial to provide suitable estimates on the mountain pass level needed to restore compactness.  The relevance of these conditions is highlighted by the fact that for nonperiodic graphs that do not satisfy the assumptions of Theorem  \ref{thm:exHalf} one cannot hope, generally, to prove existence of solutions as in the preceding results. In fact, in Proposition \ref{prop:H} below we show that for graphs satisfying the so--called assumption (H) (a topological condition on graphs with half-lines originally introduced in \cite{AST}) the mountain pass level we construct coincides with the level of the corresponding problem (with the same prescribed mass) on $\R$. However, a prototypical example of graphs fulfilling assumption (H) is the star graph (Figure \ref{fig:star}), for which it is well-known (see e.g. \cite{ACFN}) that, for {\em any} mass $\mu$, no solution of \eqref{nlsG} exists with energy equal to that of the mountain pass level on $\R$. We conjecture that the same is true for {\em all} graphs satisfying assumption (H) but, at the present time, we are not able to prove this statement in its full generality. If a proof of this result were available it would be extremely interesting, since it would reveal a deep property of the NLS equation on graphs: in \cite{AST_CMP,AST}, it is shown that in the subcritical and critical regimes, assumption (H) prevents the existence of ground states; the validity of our conjecture would then provide a similar statement for mountain pass solutions in  the supercritical case.

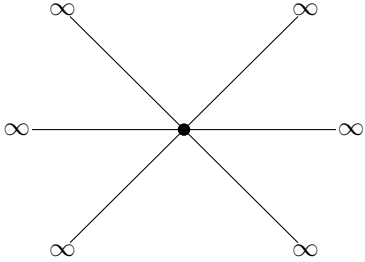
\begin{figure}
	\centering
	\begin{tikzpicture}
		\node at (0,0) [nodo] {};
		\draw (2,0)--(-2,0);
		\node at (2.2,0) [infinito] {$\footnotesize\infty$};
		\node at (-2.2,0) [infinito] {$\footnotesize\infty$};
		\draw (-1.5,-1.5)--(1.5,1.5); 
		\node at (-1.6,-1.6) [infinito] {$\footnotesize\infty$};
		\node at (1.6,1.6) [infinito] {$\footnotesize\infty$};
		\draw (-1.5,1.5)--(1.5,-1.5); 
		\node at (1.6,-1.6) [infinito] {$\footnotesize\infty$};
		\node at (-1.6,1.6) [infinito] {$\footnotesize\infty$};
	\end{tikzpicture}
	\caption{A star graph with $6$ half-lines glued together at their common origin.}
	\label{fig:star}
\end{figure}

The importance of the smallness assumption on the mass in Theorems \ref{thm:exHalf}--\ref{thm:exper} is analogous. As indicated, a key step in our argument is to prove that, for small masses, {\em all} solutions of \eqref{nlsG} have a nonnegative energy. On the contrary, the next two results show that, in general, for large values of $\mu$ there do exist solutions with {\em negative} energy. 
\begin{theorem}
	\label{thm:neg}
	Let $\G\in\mathbf{G}$ be a noncompact graph with finitely many edges, no pendant and exactly one half-line. Then, there exists $\delta>0$ such that, for every $p\in[6,6+\delta]$, there exists $\mu>0$, depending on $p$ and $\G$, for which problem \eqref{nlsG} admits a positive solution $u\in H_\mu^1(\G)$ with $E(u,\G)<0$.
\end{theorem}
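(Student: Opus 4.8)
The plan is to obtain the solution, for $p$ slightly above $6$, as a \emph{constrained local minimizer} of the energy on the mass sphere inside a ball for the $L^2$-norm of the derivative, exploiting the structure of the $L^2$-critical case $p=6$. Throughout, write $\G=\K\cup\HH$, where $\HH\cong[0,\infty)$ is the unique half-line, attached at the vertex $\vv_0$, and $\K$ is the compact core formed by the bounded edges (connected, with $|\K|>0$). Let $\phi$ be the positive even soliton of $-\phi''+\phi=\phi^5$ on $\R$, put $\mu_\R:=\|\phi\|_{L^2(\R)}^2$, and recall that $E(\phi,\R)=0$ when $p=6$, by the Pohozaev identity. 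We keep the exponent $p$ explicit in $E=E(\cdot,\G)$. The heuristic is that the constant function on a compact graph has negative energy, while a soliton-type profile on the half-line with small mass contributes little: one must glue these while retaining compactness.

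First I would fix a test configuration with small derivative and negative energy. For small $\sigma>0$ let $w_\sigma$ be equal on $\HH$ to the spread-out half-soliton $x\mapsto\sigma^{1/4}\phi(\sqrt\sigma\,x)$ and equal to the constant $\sigma^{1/4}\phi(0)$ on $\K$; using $E(\phi,\R)=0$ one computes $\|w_\sigma'\|_{L^2(\G)}^2=\tfrac14\mu_\R\sigma$, $\|w_\sigma\|_{L^2(\G)}^2=\tfrac12\mu_\R+\phi(0)^2|\K|\,\sigma^{1/2}$ and $E(w_\sigma,\G)=-\tfrac16\phi(0)^6|\K|\,\sigma^{3/2}<0$ at $p=6$, hence, by continuity in $p$ at the fixed function $w_\sigma$, also for $p$ slightly above $6$. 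I then fix $\sigma$ so small that $\mu:=\|w_\sigma\|_{L^2(\G)}^2\in(\mu_\R/2,\mu_\R)$, and keep this fixed positive $\mu$ (so Proposition~\ref{asympt} is not in conflict) for all $p\in[6,6+\delta]$. The second, analytic, ingredient is a refined Gagliardo–Nirenberg inequality on $\G$: since $\G$ has a half-line, the sharp constant in $\|v\|_{L^6(\G)}^6\le C\|v\|_{L^2(\G)}^4\|v'\|_{L^2(\G)}^2$ equals four times that on $\R$ (so the $L^2$-critical mass of $\G$ is $\mu_\R/2$), but it is approached only by functions spreading along the half-line, which have small $\|v'\|_{L^2(\G)}$; quantitatively, for each $\eta>0$ there is $R_\eta=R_\eta(\G)$ so that the inequality holds with constant $(1+\eta)C_\R$ once $\|v'\|^2\ge R_\eta$, and likewise for $L^p$ with $p$ near $6$. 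Inserting this into $E(v,\G)\ge\tfrac12\|v'\|^2-\tfrac{C_{p,\G}}{p}\|v\|_2^{(p+2)/2}\|v'\|^{(p-2)/2}$ and using $\mu<\mu_\R$ yields a positive barrier: there are $R_1=R_1(\G)$, $R_2(p)\to+\infty$ as $p\downarrow6$, and $\beta>0$ uniform for $p$ near $6$, with $E(v,\G)\ge\beta$ whenever $v\in H^1_\mu(\G)$ and $R_1\le\|v'\|^2\le R_2(p)$; shrinking $\delta$ makes $R_2(p)>R_1$. Taking $\sigma$ small also ensures $\|w_\sigma'\|^2\le R_1$ and, crucially, that at $p=6$ the whole region where $E<0$ on $H^1_\mu(\G)$ sits comfortably inside $\{\|v'\|^2<R_1\}$ — this is where the closeness of $\mu$ to the critical value $\mu_\R/2$ is exploited.

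Now fix $p\in[6,6+\delta]$ and set $m_p:=\inf\{E(v,\G):v\in H^1_\mu(\G),\ \|v'\|^2\le R_1\}\le E(w_\sigma,\G)<0<\beta$. Minimizing sequences are then bounded in $H^1(\G)$ and, by the barrier, eventually have $\|v_n'\|^2$ bounded away from $R_1$; vanishing is impossible since it would force $\liminf E(v_n,\G)\ge0$. The only remaining possibility is that mass $\nu\in(0,\mu)$ escapes to infinity along $\HH$; the escaping bump then lives essentially on $\R$ with mass $<\mu_\R$ and $\|v'\|^2\le R_1$, hence nonnegative energy (the $\R$-version of the barrier estimate, still valid for $p$ near $6$), while the part near $\K$ is precompact and converges strongly to some $u_*\in H^1_{\mu-\nu}(\G)$ with $E(u_*,\G)\le m_p<0$; in particular $\mu-\nu>\mu_\R/2$. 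This contradicts the strict subadditivity $m_p<m_{p,\mu-\nu}$, proved by rescaling a near-minimizer $v$ at mass $\mu-\nu$ — which, by the previous paragraph, has $\|v'\|^2$ safely below $R_1$, so that $(\mu/(\mu-\nu))^{1/2}v$ is still admissible — and observing that, since $p/2>1$ and $E(v,\G)<0$, this rescaling strictly decreases the energy. Hence minimizing sequences are precompact, $m_p$ is attained at some $u_p$ with $\|u_p'\|^2<R_1$, i.e.\ $u_p$ is an interior minimizer and therefore a critical point of $E$ constrained to $H^1_\mu(\G)$: it solves \eqref{nlsG} up to the sign, for a Lagrange multiplier $\lambda_p$, with $E(u_p,\G)=m_p<0$. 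Replacing $u_p$ by $|u_p|$ preserves the energy and the constraint, so we may assume $u_p\ge0$; then $-u_p''+\lambda_p u_p=u_p^{p-1}\ge0$ together with the strong maximum principle on the connected graph $\G$ gives $u_p>0$; finally $\lambda_p\le0$ would make $u_p$ positive and concave on $\HH$, impossible for $u_p\in L^2(\HH)$, so $\lambda_p>0$. This is the sought positive solution of negative energy, and the theorem follows.

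I expect the main obstacle to be the \emph{uniform-in-$p$} version of the critical theory: proving the refined Gagliardo–Nirenberg inequality and the resulting positive barrier on $\G$, and running the concentration-compactness argument — especially the strict subadditivity that forbids mass from leaking into the half-line — uniformly for $p$ in a right neighbourhood of $6$; determining how small $\mu$ and how large $R_1$ must be taken so that the negative-energy region stays trapped below the barrier is precisely what fixes $\delta$. An alternative route would construct $u_p$ directly for large Lagrange multiplier $\lambda$ via a Lyapunov–Schmidt/singular-perturbation argument, gluing the constant solution on the (rescaled, hence long-edged) core to a soliton tail on the half-line, where the energy scales like $-\lambda^{p/(p-2)}|\K|<0$; but the gluing and its nondegeneracy are themselves delicate.
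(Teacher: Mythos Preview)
Your route is genuinely different from the paper's. The paper does \emph{not} do constrained local minimization of the energy; instead it fixes the Lagrange multiplier $\overline\lambda$ associated with the $p=6$ energy ground state $\overline u\in H^1_{\mu_\R}(\G)$ (which exists by \cite{AST_CMP} and has $E(\overline u,\G)<0$) and minimizes the \emph{action} $J_{\overline\lambda,p}(v)=\tfrac12\|v'\|_2^2+\tfrac{\overline\lambda}2\|v\|_2^2-\tfrac1p\|v\|_p^p$ on the Nehari manifold $\NN_{\overline\lambda,p}(\G)$. At $p=6$, the identity between action and energy ground states from \cite{DST_MA} gives $\inf_{\NN_{\overline\lambda,6}(\G)}J_{\overline\lambda,6}=E(\overline u,\G)+\tfrac{\overline\lambda}2\mu_\R<\tfrac{\overline\lambda}2\mu_\R=\inf_{\NN_{\overline\lambda,6}(\R)}J_{\overline\lambda,6}$; this strict inequality persists for $p$ slightly above $6$ by continuity, and then the existence of a Nehari minimizer $u_p$ follows from \cite{DDGST}. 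A short strong-convergence argument (using that on the unique half-line $u_p$ is an explicit soliton tail) shows $u_p\to\overline u$ in $H^1(\G)$ as $p\downarrow6$, whence $E(u_p,\G)<0$ for $p$ near $6$. Compared with your scheme, the paper's proof is shorter because the compactness analysis is outsourced to \cite{DDGST} and the action/energy bridge to \cite{DST_MA}; your approach, by contrast, stays entirely on the energy side and even yields a solution at a \emph{fixed} mass $\mu$ for all $p\in[6,6+\delta]$, slightly more than the statement asks. Your central black box---the ``refined'' Gagliardo--Nirenberg inequality asserting that the $L^6$ GN ratio on $\G$ drops to $(1+\eta)K_\R$ once $\|v'\|_2^2/\|v\|_2^2$ is large---is equivalent to the finiteness of $\inf_{H^1_{\mu_\R}(\G)}E(\cdot,\G)$ at $p=6$, which is exactly what \cite{AST_CMP} proves for graphs with one half-line and no pendant, so your barrier can be made rigorous; note however that the threshold depends on $\|v'\|_2^2/\|v\|_2^2$ (not on $\|v'\|_2^2$ alone), which is precisely what makes your rescaled competitor $(\mu/(\mu-\nu))^{1/2}v$ admissible in the subadditivity step. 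Two small points to tighten: at $p>6$ the escaping piece on the half-line has energy $\ge -o_p(1)$ (not $\ge0$) uniformly on $\{\|v'\|_2^2\le R_1\}$, which still suffices since $m_p\le E(w_\sigma,\G)\le -c<0$ with $c$ independent of $p$; and the uniformity in $p$ of the barrier, which you flag, follows by interpolating $\|v\|_p^p\le\|v\|_6^6\|v\|_\infty^{p-6}$ and using the refined $L^6$ inequality.
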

\begin{theorem}
	\label{thm:prop_p}
	Let $\G\in\mathbf{G}$ be a noncompact graph with finitely many edges, at least one of which bounded, and such that every vertex of $\G$ is attached to an even number of half-lines (possibly zero). For every $p>6$ there exists $\overline{\mu} > 0$, depending on  $p$ and $\G$, such that, for all $\mu \ge \overline{\mu}$, problem \eqref{nlsG} admits a positive solution $u\in H_\mu^1(\G)$ with $E(u,\G)<0$.
\end{theorem}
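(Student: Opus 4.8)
The plan is to prove Theorem~\ref{thm:prop_p} by an \emph{explicit} construction, entirely bypassing the mountain-pass machinery used for Theorems~\ref{thm:exHalf}--\ref{thm:exper}. Let $\phi_1$ be the (even, positive, exponentially decaying) soliton solving $-\phi_1''+\phi_1=\phi_1^{p-1}$ on $\R$, and $\phi_\lambda(x):=\lambda^{1/(p-2)}\phi_1(\sqrt\lambda\,x)$ its rescaling, solving $-\phi_\lambda''+\lambda\phi_\lambda=\phi_\lambda^{p-1}$ on $\R$. Let $\K$ be the compact core of $\G$, i.e.\ the union of the bounded edges: it is nonempty by hypothesis, and since $\G$ is connected and a half-line is incident to exactly one vertex, every vertex of $\G$ lies on a bounded edge and $\K$ is connected. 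For $\lambda>0$ put $a:=\lambda^{1/(p-2)}$; since $\phi_\lambda$ decreases from $\phi_\lambda(0)=(p\lambda/2)^{1/(p-2)}>a$ to $0$ on $[0,+\infty)$, there is a unique $s^*=s^*(\lambda)>0$ with $\phi_\lambda(s^*)=a$. I would then define $u_\lambda$ on $\G$ by setting $u_\lambda\equiv a$ on $\K$ and, at each vertex $\vv$ carrying $m_\vv$ half-lines (an even number, by assumption), pairing the half-lines two by two and placing on each pair a copy of $\phi_\lambda$ translated by $s^*$ along the pair — the monotone tail on one half-line, the piece through the maximum on the other — so that this copy takes the value $a$ at $\vv$ and its two one-sided derivatives at $\vv$ are opposite.

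I would then verify that $u_\lambda$ is a genuine solution of \eqref{nlsG}. It is continuous at every vertex because each incident piece equals $a$ there; it solves the ODE on every edge — trivially on the bounded ones since $a^{p-1}=\lambda a$, and on the half-lines by construction of $\phi_\lambda$; it is positive and belongs to $H^1(\G)$ (the half-line pieces decay exponentially). Finally the Kirchhoff condition holds at every vertex: the core edges contribute $0$ because $u_\lambda$ is constant on $\K$, and each pair of half-lines contributes $0$ because the two one-sided derivatives cancel. This last cancellation is precisely where the hypothesis that \emph{every} vertex carries an \emph{even} number of half-lines is used, while the existence of a bounded edge is what makes the constant-on-$\K$ part available. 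The associated Lagrange multiplier is $\lambda>0$.

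What then remains is the elementary analysis of the mass and energy of $u_\lambda$ as $\lambda$ varies. Writing $N$ for the total number of half-lines of $\G$ (so there are $N/2$ threaded solitons) and $|\K|$ for the total length of the core, a direct computation using the scaling of $\phi_\lambda$ gives
\[
\|u_\lambda\|_{L^2(\G)}^2=|\K|\,\lambda^{\frac{2}{p-2}}+\frac N2\,\|\phi_1\|_{L^2(\R)}^2\,\lambda^{-\frac{p-6}{2(p-2)}},
\qquad
E(u_\lambda,\G)=\frac N2\,E(\phi_\lambda,\R)-\frac{|\K|}{p}\,\lambda^{\frac{p}{p-2}},
\]
while the Pohozaev identity on $\R$ yields $E(\phi_\lambda,\R)=\lambda^{\frac{p+2}{2(p-2)}}E(\phi_1,\R)$ with $E(\phi_1,\R)=\frac{p-6}{2(p+2)}\|\phi_1\|_{L^2(\R)}^2>0$ because $p>6$. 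Since $\frac{p}{p-2}>\frac{p+2}{2(p-2)}$, the core term eventually dominates, so $E(u_\lambda,\G)\to-\infty$ as $\lambda\to+\infty$, and in the same limit $\|u_\lambda\|_{L^2(\G)}^2\to+\infty$ and $\lambda\mapsto\|u_\lambda\|_{L^2(\G)}^2$ is strictly increasing for $\lambda$ large. Hence there exist $\lambda_*>0$ and $\overline\mu>0$ so that for every $\mu\ge\overline\mu$ the equation $\|u_\lambda\|_{L^2(\G)}^2=\mu$ has a solution $\lambda\ge\lambda_*$, and the corresponding $u_\lambda$ is the positive solution in $H^1_\mu(\G)$ with $E(u_\lambda,\G)<0$ claimed by the theorem. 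The only genuinely delicate point is the first one: identifying the ``constant core plus threaded solitons'' ansatz and checking, with the correct parametrization of each pair of half-lines, that the Kirchhoff conditions are satisfied; the rest is bookkeeping with the explicit profile $\phi_\lambda$.
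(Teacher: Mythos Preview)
Your proof is correct and follows essentially the same approach as the paper's own proof: both construct $u_\lambda$ as the constant $\lambda^{1/(p-2)}$ on the compact core together with translated copies of the soliton $\phi_\lambda$ threaded through each pair of half-lines, verify the Kirchhoff conditions via the pairing, and conclude by comparing the exponents $\frac{p}{p-2}>\frac{p+2}{2(p-2)}$ in the energy expansion. Your write-up is in fact a bit more careful than the paper's in two places: you explicitly argue that the map $\lambda\mapsto\|u_\lambda\|_2^2$ is continuous, strictly increasing for large $\lambda$, and tends to $+\infty$, so that \emph{every} $\mu\ge\overline\mu$ is attained (the paper leaves this implicit), and you note that connectedness forces every vertex to lie on a bounded edge.
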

As its proof shows, the solutions of Theorem \ref{thm:neg} are obtained as a perturbation of negative energy ground states of $E$ at the critical exponent $p=6$, that on graphs satisfying the hypotheses of Theorem \ref{thm:neg} are known to exist for a full interval of masses (see \cite[Theorem 3.3]{AST_CMP}). On the contrary, the result of Theorem \ref{thm:prop_p} is not perturbative and it was communicated to us by D. Galant \cite{DG} (to whom we are sincerely grateful). Since their energy is negative, such solutions cannot be found by our mountain pass approach, which is designed to work at strictly positive levels. 
Observe also that the simplest graph fulfilling Theorem \ref{thm:neg} is perhaps the {\em tadpole} graph (Figure \ref{fig:tad}), that contains a signpost, whereas the simplest one to which Theorem \ref{thm:prop_p} applies is the so-called  $\mathcal{T}$-graph (Figure \ref{fig:T}), that was extensively studied in \cite{ACT} and that contains a pendant. This shows that, without the small mass assumption, there is no chance to recover our mountain pass argument, even in presence of the topological features introduced above. 

We finally note that the advantage given by the small mass to control from below the energy of solutions was already observed and exploited in some related problems posed on $\R^N$ (see for example \cite{MRV}).

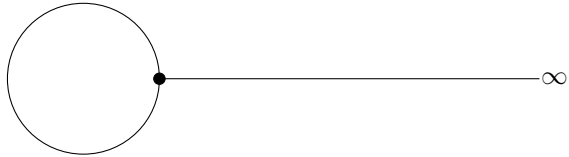
\begin{figure}[t]
	\centering
	\begin{tikzpicture}
		\node at (0,0) [nodo] {};
		\draw (-1,0) circle (1);
		\draw (0,0)--(5,0);
		\node at (5.2,0) [infinito] {$\footnotesize\infty$};
	\end{tikzpicture}
	\caption{The tadpole graph, i.e. a loop attached to a half-line.}
	\label{fig:tad}
\end{figure}
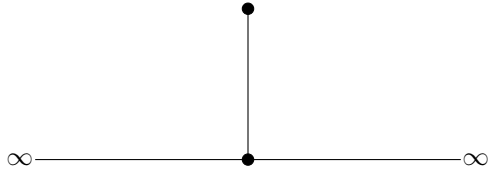
\begin{figure}[t]
	\centering
	\begin{tikzpicture}
				\node at (0,0) [infinito] {$\footnotesize\infty$};
				\draw (0.2,0)--(5.8,0);
				\node at (6,0) [infinito] {$\footnotesize\infty$};
				\node at (3,0) [nodo] {};
				\draw (3,0)--(3,2);
				\node at (3,2) [nodo] {};
			\end{tikzpicture}
	\caption{The $\mathcal{T}$-graph, i.e. a pendant attached to a full line.}
	\label{fig:T}
\end{figure}

\smallskip
The remainder of the paper is organized as follows. Section \ref{sec:prel} collects some preliminary results on the mountain pass approach we will use and on the NLS equation on the real line. Section \ref{sec:lowerE} provides a general lower bound on the energy of positive solutions of \eqref{nlsG} with small mass. In Section \ref{sec:apriori} we introduce the mountain pass problem and we derive some a priori estimates on the mountain pass level depending on the topology of the graphs. Finally, in Sections \ref{sec:exhalf}--\ref{sec:exper}--\ref{sec:thmneg} we  prove Theorem \ref{thm:exHalf}, Theorem \ref{thm:exper} and Theorems \ref{thm:neg}--\ref{thm:prop_p} respectively. 

\smallskip
{\bf Notation.} In what follows, for $u\in L^q(\G)$ we will write $\|u\|_q$ in place of $\|u\|_{L^q(\G)}$, omitting the explicit reference to the domain of integration when it coincides with the whole space. Conversely, the full notation for norms will always be used whenever the domain of integration is a proper subset of the full space.  \bigskip

\section{Preliminaries}
\label{sec:prel}

This section collects various preliminary results that will be largely used in the rest of the paper.

\subsection{A general existence result for bounded Palais-Smale sequences}

The main aim of this subsection is to rewrite in the context of the present paper the content of \cite[Theorem 1.10]{BCJS_TAMS}. To this end, for any $\rho>0$, let $E_\rho:H^1(\G)\to\R$ be given by
\[
E_\rho(u,\G):=\frac12\|u'\|_2^2-\frac\rho p\|u\|_p^p\,.
\]
\begin{remark}
Throughout the paper, the notation $E_\rho$ will be used when $\rho\neq1$ only. When $\rho=1$, we will always write $E$ as in the Introduction, avoiding the symbol $E_1$.
\end{remark}
We recall the following definition.
\begin{definition}
	Let $\G\in\mathbf{G}$, $p>6$, $\mu>0$ and $\rho>0$. For every $u\in H_\mu^1(\G)$ and $\theta \geq 0$, the quantity
	\[
	\widetilde{m}_\theta(u):=\sup\left\{\text{\normalfont dim}\,L: L \text{\normalfont \, subspace of $T_uH_\mu^1(\G)$ such that }D^2E_\rho(u,\G)[w,w]<-\theta\|w\|_{H^1(\G)}^2,\,\forall w\in L\setminus\{0\}\right\}
	\]
	is called the {\em approximate Morse index }of $u$ with respect to $\theta$, where $T_uH_\mu^1(\G)$ is the tangent space to $H_\mu^1(\G)$ at $u$ and 
	\[
	D^2E_\rho(u,\G)[w,w]:=E_\rho''(u,\G)[w,w]-\frac{E_\rho'(u,\G)[u]}{\|u\|_2^2}\| w\|_{L^2(\G)}\qquad\forall w\in T_uH_\mu^1(\G)\,.
	\]
	If $u$ is a critical point of $E_\rho$ constrained to $H_\mu^1(\G)$ and $\theta=0$, we say that $\widetilde{m}_0(u)$ is the {\em Morse index of $u$ as a constrained critical point}.
\end{definition}

The next theorem, that is no more than \cite[Theorem 1.10]{BCJS_TAMS} applied to the family of functionals $E_\rho$ above, provides a general existence result of bounded Palais-Smale sequences for these functionals constrained to $H_\mu^1(\G)$, with further information on their approximate Morse index.
\begin{theorem}
	\label{thm:PS}
	Let $\G\in\mathbf{G}$, $p>6$ and $\mu>0$ be given. Let $I\subset(0,+\infty)$ be a given interval and assume that there exist $A,B\subset H_\mu^1(\G)$ independent of $\rho$ for which
	\[
	c_\rho:=\inf_{\gamma\in\Gamma}\max_{t\in[0,1]}E_\rho(\gamma(t),\G)>\max\left\{\sup_{u\in A}E_\rho(u,\G),\sup_{u\in B}E_\rho(u,\G)\right\}\qquad\forall \rho\in I\,,
	\]
	where 
	\[
	\Gamma:=\left\{\gamma\in C\left([0,1],H_\mu^1(\G)\right)\,:\,\gamma(0)\in A,\gamma(1)\in B\right\}\,.
	\]
	Then, for almost every $\rho\in I$, there exist sequences $(u_n)_n\subset H_\mu^1(\G)$, such that, as $n\to+\infty$, $\zeta_n\to0^+$ and
	\begin{itemize}
		\item[(i)] $\displaystyle E_\rho(u_n,\G)\to c_\rho$;
		\item[(ii)] $\displaystyle E_\rho'(u_n,\G)-\frac{E_\rho'(u_n,\G)[u_n]}{\mu}u_n\to0$ in the dual of $H_\mu^1(\G)$;
		\item[(iii)] $(u_n)_n$ is bounded in $H_\mu^1(\G)$;
		\item[(iv)] $\displaystyle \widetilde{m}_{\zeta_n}(u_n)\leq 1$.
	\end{itemize}
\end{theorem}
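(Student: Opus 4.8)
The plan is to observe that the statement is a direct specialization of the abstract min--max result \cite[Theorem 1.10]{BCJS_TAMS}, so the whole proof reduces to casting the present setting into that framework and verifying, one by one, the hypotheses required there. First I would fix the decomposition $\mathcal A(u):=\frac12\|u'\|_2^2$ and $\mathcal B(u):=\frac1p\|u\|_p^p$, so that $E_\rho(u,\G)=\mathcal A(u)-\rho\,\mathcal B(u)$, and record the structural facts. (a) $H_\mu^1(\G)$ is a codimension-one $C^2$-submanifold of the Hilbert space $H^1(\G)$: the constraint map $u\mapsto\|u\|_2^2-\mu$ is smooth with nonvanishing differential $2u$ on it (since $\mu>0$ forces $u\not\equiv0$), so $T_uH_\mu^1(\G)=\{w\in H^1(\G):\langle u,w\rangle_{L^2}=0\}$ and the constrained second-order form $D^2E_\rho(u,\G)$, together with the approximate Morse index $\widetilde m_\theta(u)$ of the Definition above, coincide with the corresponding objects used in \cite{BCJS_TAMS}. (b) Since $\G\in\mathbf{G}$ has one-dimensional edges and uniformly positive edge lengths, $H^1(\G)\hookrightarrow L^q(\G)$ continuously for all $q\in[2,\infty]$; hence $\mathcal A$ and $\mathcal B$ are well defined and of class $C^2$ on $H^1(\G)$ (here $p>6>2$ ensures $C^2$-regularity of $t\mapsto|t|^p$), and $\mathcal B$ sends bounded sets to bounded sets. (c) $\mathcal B\ge0$ on $H^1(\G)$, so for each fixed $u$ the map $\rho\mapsto E_\rho(u,\G)$ is non-increasing: this is exactly the monotonicity that drives Struwe's monotonicity trick underlying \cite[Theorem 1.10]{BCJS_TAMS}.

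Next I would note that the min--max geometry required by \cite[Theorem 1.10]{BCJS_TAMS} is precisely the hypothesis made in the statement of the theorem: the endpoint sets $A,B\subset H_\mu^1(\G)$ and the class of admissible paths $\Gamma$ are independent of $\rho$, $\Gamma\ne\emptyset$, and the strict inequality $c_\rho>\max\{\sup_AE_\rho,\sup_BE_\rho\}$ holds for every $\rho\in I$. Moreover $\Gamma$ is generated by continuous paths, i.e. it is a one-dimensional min--max class in the sense of \cite{BCJS_TAMS}; this is what forces the approximate Morse index in the conclusion to be bounded by $1$ rather than by a larger integer. It remains only to check that $c_\rho$ is finite for each $\rho\in I$ — along any fixed $\gamma\in\Gamma$ one has $E_\rho(\gamma(t),\G)\le\mathcal A(\gamma(t))$, which is bounded on $[0,1]$ — and bounded below on compact subintervals of $I$ by the geometry, so that the monotone function $\rho\mapsto c_\rho$ is finite and therefore differentiable at almost every $\rho\in I$.

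With all the hypotheses in place, \cite[Theorem 1.10]{BCJS_TAMS} applies verbatim: for every $\rho\in I$ at which $\rho\mapsto c_\rho$ is differentiable (hence for a.e. $\rho\in I$) it yields a sequence $(u_n)_n\subset H_\mu^1(\G)$ satisfying (i) and (ii), together with an approximate-Morse-index estimate that, for a one-dimensional min--max class, reads exactly as (iv) for a suitable $\zeta_n\to0^+$; and the boundedness (iii) is produced by the same argument, since at a differentiability point the difference quotients of $c_\rho$ control $\|u_n\|_p^p$ along the sequence and then $\|u_n'\|_2^2=2E_\rho(u_n,\G)+\frac{2\rho}p\|u_n\|_p^p$, combined with the constraint $\|u_n\|_2^2=\mu$, gives a uniform $H^1(\G)$-bound.

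I do not expect a genuine obstacle in this argument: the only point requiring some care is the bookkeeping needed to identify the constrained differential geometry here — notably the fact that the manifold $H_\mu^1(\G)$, together with the $H^1(\G)$ inner product, does \emph{not} depend on $\rho$ — with the abstract setup of \cite{BCJS_TAMS}, and to check that the coercivity/regularity conditions on the pair $(\mathcal A,\mathcal B)$ required there are met by $\mathcal A(u)=\frac12\|u'\|_2^2$ and $\mathcal B(u)=\frac1p\|u\|_p^p$ on graphs in the class $\mathbf{G}$. Once these verifications are carried out, the conclusion is immediate.
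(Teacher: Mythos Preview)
Your proposal is correct and aligns with the paper's approach: the paper does not give a proof of this theorem at all, stating only that it ``is no more than \cite[Theorem 1.10]{BCJS\_TAMS} applied to the family of functionals $E_\rho$ above.'' Your careful verification of the abstract hypotheses (the $C^2$ codimension-one constraint, the decomposition $E_\rho=\mathcal A-\rho\mathcal B$ with $\mathcal B\ge0$, the monotonicity trick, and the one-dimensional min--max class yielding the Morse-index bound $1$) is exactly the bookkeeping the paper leaves implicit.
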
 
Theorem \ref{thm:PS} will be used as a key tool in the proof of Theorems \ref{thm:exHalf}--\ref{thm:exper}. The information on the Morse index will play a relevant role combined with the following lemma, that simply rephrases in our setting \cite[Lemma 2.5]{BCJS_Non}.
\begin{lemma}
	\label{lem:dim3}
	Let $\G\in\mathbf{G}$, $p>6,\mu>0$ and $\rho>0$. Assume that $(u_n)_n\subset H_\mu^1(\G)$, $(\lambda_n)_n\subset\R$ and $(\zeta_n)_n\subset\R^+$ are such that,  as $n\to+\infty$, $\zeta_n\to0^+$ and
	\begin{itemize}
		\item[(i)] if a subspace $W\subset H^1(\G)$ satisfies
		\[
		E_\rho''(u_n,\G)[w,w]+\lambda_n\|w\|_2^2<-\zeta_n\|w\|_{H^1(\G)}^2\qquad\forall w\in W\setminus\{0\}
		\]
		for sufficiently large $n$, then $\text{\normalfont dim}\,W\leq2$, and
		
		\item[(ii)] there exist $\lambda\in\R$, a subspace $Y\subset H^1(\G)$ with $\text{\normalfont dim}\,Y\geq3$, and $a>0$ such that, for every large $n$,
		\[
		E_\rho''(u_n,\G)[w,w]+\lambda\|w\|_2^2\leq-a\|w\|_{H^1(\G)}^2\qquad\forall w\in Y.
		\]
	\end{itemize}
	Then $\lambda_n>\lambda$ for every $n$ large enough.
\end{lemma}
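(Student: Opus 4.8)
The plan is to argue by contradiction, using the decomposition of the quadratic form $D^2E_\rho$ into a part on the tangent space and a correction term, and to play the hypotheses (i) and (ii) against each other on carefully chosen subspaces. Suppose, for contradiction, that along a subsequence $\lambda_n \le \lambda$. First I would recall the relation between the ``free'' second derivative $E_\rho''(u_n,\G)[w,w]+\lambda_n\|w\|_2^2$ appearing in the hypotheses and the constrained quadratic form $D^2E_\rho(u_n,\G)$ on $T_{u_n}H_\mu^1(\G)$ used in the definition of $\widetilde m_\theta$; since the Lagrange multipliers $\lambda_n$ are exactly $-E_\rho'(u_n,\G)[u_n]/\|u_n\|_2^2$ up to the normalization, on the tangent space the two forms agree. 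This is the bridge that lets hypothesis (i)---which is the abstract shadow of $\widetilde m_{\zeta_n}(u_n)\le 1$ together with one extra dimension coming from the constraint---be stated in the form given.

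Next, the core of the argument: take the subspace $Y$ from hypothesis (ii), with $\dim Y \ge 3$, on which $E_\rho''(u_n,\G)[w,w]+\lambda\|w\|_2^2 \le -a\|w\|_{H^1}^2$ for all large $n$. If $\lambda_n \le \lambda$ along a subsequence, then for every $w \in Y$,
\[
E_\rho''(u_n,\G)[w,w]+\lambda_n\|w\|_2^2 \le E_\rho''(u_n,\G)[w,w]+\lambda\|w\|_2^2 \le -a\|w\|_{H^1(\G)}^2,
\]
using $\lambda_n\|w\|_2^2 \le \lambda\|w\|_2^2$ (here one must be mindful of the sign of $\|w\|_2^2$, which is of course nonnegative, so the inequality goes the right way). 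Since $a>0$ and $\zeta_n\to 0^+$, we have $-a\|w\|_{H^1}^2 < -\zeta_n\|w\|_{H^1}^2$ for all $w\in Y\setminus\{0\}$ once $n$ is large enough. Thus $Y$ is an admissible subspace $W$ in hypothesis (i), forcing $\dim Y \le 2$, which contradicts $\dim Y \ge 3$. Hence $\lambda_n > \lambda$ for all large $n$.

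The only genuinely delicate point is the passage from the strict inequality $<-\zeta_n\|w\|_{H^1}^2$ required in hypothesis (i) to what we produce, namely $\le -a\|w\|_{H^1}^2$: one needs $n$ large enough that $\zeta_n < a$, after which $\le -a\|w\|_{H^1}^2 < -\zeta_n\|w\|_{H^1}^2$ strictly for $w\ne 0$, and this holds uniformly on $Y$ because $a$ does not depend on $w$. A secondary subtlety is that hypothesis (i) is a statement about subspaces $W$ for which the inequality holds \emph{for sufficiently large $n$}; since our $Y$ and our constant $a$ are fixed and independent of $n$, and the estimate in (ii) is assumed for every large $n$, the subspace $Y$ indeed satisfies the hypothesis of (i) for all large $n$ along the contradiction subsequence, so (i) applies and yields the desired dimension bound. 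I do not anticipate any analytic obstacle here; the lemma is purely a linear-algebra-plus-limits argument, and the real work (establishing that concrete Palais--Smale sequences satisfy hypotheses (i) and (ii)) is done elsewhere in the paper.
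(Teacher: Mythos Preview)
Your argument is correct and is precisely the direct contradiction argument one expects here; the paper does not supply its own proof of this lemma but simply records that it ``rephrases in our setting \cite[Lemma 2.5]{BCJS_Non}'', where the same reasoning is used. One small remark: your opening paragraph on the relation between the free second variation $E_\rho''(u_n,\G)[\cdot,\cdot]+\lambda_n\|\cdot\|_2^2$ and the constrained form $D^2E_\rho$ is useful context for understanding \emph{why} hypothesis~(i) arises from the approximate Morse index bound, but it is not needed for the proof of the lemma as stated, since both (i) and (ii) are already phrased in terms of the free form on all of $H^1(\G)$. Concerning the subsequence point you flag at the end: the cleanest resolution is to note that in the application hypothesis~(i) actually holds for each individual large $n$ (it encodes $\widetilde m_{\zeta_n}(u_n)\le 1$ pointwise in $n$), so it passes to subsequences without difficulty; with that reading your argument is complete.
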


\subsection{NLS equations on the real line}
For every $\mu,\rho>0$, it is well-known that there exists a unique solution $(\phi_{\mu,\rho},\lambda_{\mu,\rho})\in H_\mu^1(\R)\times\R^+$ of the problem
\begin{equation}
\begin{cases}
\label{eq:nlsR}
u''+\rho u^{p-1}=\lambda u & \text{on }\R\\
u>0,\quad u(0)=\max_{x\in\R}u(x)\,.
\end{cases}
\end{equation}
Setting 
\begin{equation}
\label{eq:alfabeta}
\alpha=\frac2{6-p}\,,\qquad\beta=\frac{p-2}{6-p}\,,
\end{equation}
this solution can be written explicitly as
\[
\phi_{\mu,\rho}(x)=\rho^{\frac{\alpha}{2}}\phi_{\mu,1}\left(\rho^\alpha x\right)\qquad\forall x\in\R,
\]
where
\[
\phi_{\mu,1}(x)=\mu^\alpha \phi_{1,1}(\mu^\beta x)\qquad\forall x\in\R
\]
and
\[
\phi_{1,1}(x)=C_p\text{\normalfont sech}^{\frac\alpha\beta}(c_p x)\,,
\]
with $C_p,c_p>0$ suitable constants depending only on $p$. In particular, the previous relations show that, for every  $p>6$ and $x\in\R$, as $\mu\to 0$ it holds
\begin{equation}
\label{eq:asphi}
\begin{split}
\phi_{\mu,\rho}(x)&\,= C_p'\rho^{\frac\alpha2}\mu^\alpha e^{-\frac{2c_p}{p-2}\rho^\alpha\mu^\beta x}+o\left(\mu^\alpha e^{-\frac{2c_p}{p-2}\rho^\alpha\mu^\beta x}\right)\\
\phi_{\mu,\rho}'(x)&\,=C_p''\rho^{\frac{3\alpha}2}\mu^{\alpha+\beta} e^{-\frac{2c_p}{p-2}\rho^\alpha\mu^\beta x}+o\left(\mu^{\alpha+\beta} e^{-\frac{2c_p}{p-2}\rho^\alpha\mu^\beta x}\right)
\end{split}
\end{equation}
for suitable constants $C_p',C_p''>0$ depending only on $p$. Furthermore, direct computations yield the identity
\begin{equation}
\label{eq:relphi1}
\|\phi_{\mu,\rho}'\|_{2}^2=\frac{p-2}{2p}\rho\|\phi_{\mu,\rho}\|_p^p\,,
\end{equation}
so that
\begin{equation}
\label{eq:Erhophi}
E_\rho(\phi_{\mu,\rho},\R)=\frac{p-6}{4p}\rho\|\phi_{\mu,\rho}\|_p^p\,.
\end{equation}
Recalling that $E(\phi_{\mu,1},\R)=\mu^{2\beta+1}E(\phi_{1,1},\R)>0$, it then follows that
\begin{equation}
\label{eq:Emu}
E_\rho(\phi_{\mu,\rho})=\theta_p\rho^{\frac4{6-p}}	\mu^{2\beta+1} \quad \mbox{with} \quad \theta_p:=\frac{p-6}{4p}\|\phi_{1,1}\|_p^p>0.
\end{equation}
In particular \eqref{eq:Emu} implies that $E_\rho(\phi_{\mu,\rho})$ is a strictly decreasing function of $\mu >0$.

\subsection{Gagliardo-Nirenberg inequalities on graphs}
For every noncompact graph $\G\in\mathbf{G}$, we recall here the following well-known Gagliardo-Nirenberg inequalities
\begin{equation}
\label{eq:GNp}
\|u\|_{p}^p\leq K_{p,\G}\|u\|_{2}^{\frac p2+1}\|u'\|_{2}^{\frac p2-1}\qquad\forall u\in H^1(\G),\,p>2\,,
\end{equation}
\[
\|u\|_{\infty}\leq \sqrt{2}\|u\|_{2}^{\frac12}\|u'\|_{2}^{\frac12}\qquad\forall u\in H^1(\G)\,,
\]
with $K_{p,\G}>0$ depending on $p$ and $\G$ only (for a proof of these inequalities see e.g. \cite[Section 2]{AST_JFA}).

\subsection{Properties of the Lagrange multiplier}
For $\G\in\mathbf{G}$, let 
\begin{equation}
\label{eq:lambdaG}
\lambda_\G:=\inf_{u\in H^1(\G)\setminus\{0\}}\frac{\|u'\|_{2}^2}{\|u\|_{2}^2}
\end{equation}
be the bottom of the spectrum of the operator $\displaystyle-\frac{d^2}{dx^2}$ on $\G$ endowed with Kirchhoff conditions at the vertices.
Introducing the shorthand notation
\[
S_\mu =\left\{ u\in H_\mu^1(\G) \mid u \text{ solves } \eqref{nlsG} \text{ for some } \lambda \in \R \right\},
\]
we have the following lemma.
\begin{lemma}
	\label{Properties_L}
	Let $\G\in\mathbf{G}$. There results
	\begin{itemize}
		\item[(a)] if $u \in S_\mu$, then $\lambda \geq  - \lambda_\G$;
		\item[(b)] if $\G$ is a graph with at least one half-line or a periodic graph,  then $\lambda_\G=0$; 
		\item[(c)] if $\G$ is a noncompact graph with finitely many edges and $u  \in S_{\mu}$ for some $\mu >0$, then $\lambda >0$; 
		\item[(d)] if $\G$ is a graph with at least one half-line or a periodic graph, then for any $m >0$ there exists $\mu_m  >0$ such that, if $u \in S_\mu$ with $\mu \in (0, \mu_m]$, then $\lambda \geq m$.
	\end{itemize}
\end{lemma}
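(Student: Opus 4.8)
The plan is to establish the four items essentially separately, the substance being in (d).

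\emph{For item (a)} I would use the ground‑state (Agmon‑type) substitution. Since $u>0$ and $u\in C^2$ on every edge, for $v\in C_c^\infty(\G)$ write $v=u\psi$, expand $\|v'\|_2^2$, integrate by parts edge by edge, and use the continuity of $u$ together with the Kirchhoff conditions to annihilate every vertex term (there being no contribution at infinity since $v$ is compactly supported); this gives
\[
\|v'\|_2^2-\int_\G u^{p-2}v^2\dx+\lambda\|v\|_2^2=\int_\G u^2|\psi'|^2\dx\ge 0 .
\]
By density the inequality persists for every $v\in H^1(\G)$, and since $u^{p-2}\ge0$ it yields $\|v'\|_2^2\ge-\lambda\|v\|_2^2$, i.e. $\lambda_\G\ge-\lambda$.

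\emph{For items (b) and (c)}, recall that $\lambda_\G\ge0$ always. Item (b) then amounts to producing test functions with vanishing Rayleigh quotient: on a half‑line, the functions equal to $1$ on $[0,N]$, affine from $1$ to $0$ on $[N,2N]$, and $0$ afterwards; on a periodic graph, the analogous functions built from the graph distance to a fixed vertex, using that periodic graphs have at most polynomial volume growth. In both cases $\|v_N'\|_2^2/\|v_N\|_2^2\to0$. For item (c), a noncompact graph with finitely many edges contains a half‑line $h$; writing $h=[0,\infty)$ with the attaching vertex $\vv$ at $0$ and multiplying \eqref{nlsG} by $u'$, integration on $h$ (using $u,u'\to0$ at infinity) yields the conserved‑quantity identity $\tfrac12u'(\vv)^2=\tfrac\lambda2u(\vv)^2-\tfrac1pu(\vv)^p$, whence $\lambda\ge\tfrac2pu(\vv)^{p-2}>0$ because $u(\vv)>0$.

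\emph{For item (d)} I would argue by contradiction: if the claim fails there are $m>0$, $\mu_n\to0^+$ and $u_n\in S_{\mu_n}$ with $0\le\lambda_n<m$ (nonnegativity by (a)--(b)). Testing \eqref{nlsG} with $u_n$ gives $\|u_n'\|_2^2=\|u_n\|_p^p-\lambda_n\mu_n\le\|u_n\|_p^p$, and since $\G$ is noncompact $\|u_n'\|_2>0$, so the Gagliardo--Nirenberg inequality \eqref{eq:GNp} turns this into $\|u_n'\|_2^{\,3-p/2}\le K_{p,\G}\mu_n^{p/4+1/2}$; as $p>6$ (hence $3-p/2<0$), this forces $\|u_n'\|_2\to+\infty$. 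The crux is then to show that $M_n:=\|u_n\|_\infty$ nevertheless stays bounded, for then $\|u_n\|_p^p\le M_n^{p-2}\mu_n\to0$ and therefore $\|u_n'\|_2^2=\|u_n\|_p^p-\lambda_n\mu_n\to0$, a contradiction. If $M_n$ were unbounded, pass to a subsequence with $M_n\to+\infty$ and let $x_n$ realize the maximum; then $u_n'(x_n)=0$, trivially at an interior point and, at a vertex, because a maximum makes all outgoing edge‑derivatives $\le0$ while Kirchhoff makes them sum to $0$. Along any edge issuing from $x_n$, $u_n$ is then the solution of the autonomous ODE $w''=\lambda_nw-w^{p-1}$ with datum $(M_n,0)$; since $\lambda_n$ is bounded and $M_n\to+\infty$, the rescaling $w(s)=M_n\widetilde w(M_n^{(p-2)/2}s)$ makes the linear term negligible and shows that $w$ decreases to $0$ within distance $\lesssim M_n^{1-p/2}\to0$. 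As every edge has length $\ge e_0>0$, for $n$ large this distance is less than the room available along a suitable incident edge, so $u_n$ vanishes inside an edge — contradicting $u_n>0$. Hence $M_n$ is bounded and the proof is complete.

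I expect the main obstacle to be precisely this uniform $L^\infty$ bound, where the noncompactness and the $L^2$‑supercriticality interact. Two ingredients make it work and require care: the fact that the global maximum annihilates the derivative even at a vertex — a topological consequence of the Kirchhoff condition — and the quantitative ``tall bumps of the autonomous equation are narrow'' estimate, which is exactly where the lower bound $e_0>0$ on edge lengths is used. The rest, including how this bound is played against the supercritical Gagliardo--Nirenberg inequality (the only place where $p>6$ enters), is routine.
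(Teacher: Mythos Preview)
Your proof is correct. The overall logic of (d) matches the paper's, but the individual arguments differ in several places, and it is worth recording the comparison. For (a), you use the ground-state (Agmon) substitution $v=u\psi$ to obtain $\|v'\|_2^2+\lambda\|v\|_2^2\ge\int_\G u^{p-2}v^2\ge0$ directly; the paper instead tests the equation against the first Dirichlet eigenfunction on an arbitrary bounded connected $\K\subset\G$, obtains $\lambda>-\lambda_1(\K)$, and lets $\K$ exhaust $\G$. For (c), you read off the quantitative bound $\lambda\ge\tfrac2p\,u(\vv)^{p-2}>0$ from the first integral on the half-line, while the paper only rules out $\lambda=0$ qualitatively (positive solutions on a half-line with $\lambda=0$ are restrictions of periodic functions, hence not in $L^2$). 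For (d), the contradiction scheme and the Gagliardo--Nirenberg step coincide, but the $L^\infty$ control is obtained differently: the paper isolates, as a separate lemma, the explicit a priori bound $\|u\|_\infty^{p-2}\le\tfrac p2\lambda+\tfrac{p\pi^2}{2e_0^2}$ via a phase-plane integral estimate on a monotone stretch of length $\ge e_0/2$, whereas your rescaling $w(s)=M_n\widetilde w(M_n^{(p-2)/2}s)$ compares with the limit ODE $\widetilde w''=-\widetilde w^{p-1}$ (whose solution from $(1,0)$ hits $0$ in finite time with nonzero slope) to show that tall bumps are narrower than any fixed edge. The paper's explicit bound is reused downstream (in the proof of Proposition~\ref{asympt}), which is what isolating it as a lemma buys; your rescaling is more conceptual but gives no reusable constant.
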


To prove Lemma \ref{Properties_L}(d), we shall need the next preliminary lemma.
\begin{lemma}
	\label{growth}
	Let $u \in S_\mu$. Then 
	\begin{equation}
	\label{apriori}
	\|u\|_{\infty} \le \left(\frac{p}2 \lambda + \frac{p\pi^2}{2e_0^2}\right)^{\frac1{p-2}}.
	\end{equation}
\end{lemma}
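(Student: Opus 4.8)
The plan is to combine an $L^\infty$ bound on $u$ (obtained by testing the equation at a maximum point) with a standard interior-type estimate coming from the hypothesis $e_0 = \inf_{e}|e| > 0$. First I would observe that since $u \in S_\mu \subset H^1(\G)$ is a positive solution of \eqref{nlsG}, it is continuous on $\G$, vanishes along every half-line, and (being bounded) attains its maximum $M := \|u\|_\infty$ at some point $x_0 \in \G$. Two cases arise: either $x_0$ is an interior point of some edge $e$, or $x_0$ is a vertex $\vv$.

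In the first case, at an interior maximum $x_0$ we have $u'(x_0)=0$ and $u''(x_0)\le 0$, so the equation $u'' + u^{p-1} = \lambda u$ gives $M^{p-1} \le \lambda M$, hence $M \le \lambda^{1/(p-2)} \le (\tfrac p2 \lambda + \tfrac{p\pi^2}{2e_0^2})^{1/(p-2)}$ (the constant only making the bound weaker), and we are done. The delicate case is when the maximum is attained only at a vertex $\vv$, where $u$ need not be $C^1$ and the one-sided second derivative information is not immediately available. Here I would use the Kirchhoff condition: along each edge $e \succ \vv$ (identified with $[0,|e|]$ so that $\vv \leftrightarrow 0$), write $u_e$ for the restriction; then $\sum_{e\succ\vv} u_e'(0) = 0$. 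Since $\vv$ is a global maximum, $u_e'(0) \le 0$ for every edge, which forced with the sum being zero gives $u_e'(0) = 0$ for all $e \succ \vv$. Thus on each edge incident to $\vv$, $u_e$ solves $u_e'' + u_e^{p-1} = \lambda u_e$ on $[0,|e|]$ with $u_e(0) = M$, $u_e'(0) = 0$ — i.e. $\vv$ behaves exactly like an interior maximum for each incident edge, and comparing with the ODE one again concludes $u_e''(0) \le 0$ (or directly that $M$ cannot be a strict maximum unless $M^{p-1}\le \lambda M$). This handles the case $\lambda \ge 0$ cleanly.

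The role of the extra term $\tfrac{p\pi^2}{2e_0^2}$, and the main obstacle, is the possibility that $\lambda < 0$ — which by Lemma~\ref{Properties_L}(a) can happen when $\G$ is not a graph with finitely many edges, since $\lambda \ge -\lambda_\G$ and $\lambda_\G$ need not be positive for, e.g., periodic graphs. Wait — actually on periodic graphs $\lambda_\G = 0$ by Lemma~\ref{Properties_L}(b), so $\lambda \ge 0$ there too; the term $\pi^2/e_0^2$ is genuinely needed to cover the borderline and to have a bound uniform in the structure. The way I would produce it is: on an edge $e$ of length $|e| \ge e_0$ containing or adjacent to the maximum point, consider the function $v = u_e$ on an interval of length $\ell \in [e_0/?, \,\cdot\,]$; multiply $u'' = (\lambda - u^{p-2})u$ and use that if $M^{p-2} > \tfrac p2\lambda + \tfrac{p\pi^2}{2e_0^2}$, then on a neighbourhood of $x_0$ of definite length the coefficient $u^{p-2} - \lambda$ stays bounded below by $\pi^2/\ell^2$ for a suitable $\ell \le e_0$, which by a Sturm comparison / eigenvalue argument (the first Dirichlet eigenvalue of $-d^2/dx^2$ on an interval of length $\ell$ is $\pi^2/\ell^2$) forces $u$ to vanish inside that neighbourhood — contradicting positivity or the maximum being at $x_0$. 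So the crux is choosing the comparison interval of length controlled by $e_0$ and running the oscillation estimate; the factor $p/2$ comes from the quantitative step where one replaces $u^{p-2}$ near the max by something comparable to $M^{p-2}$ on a set where $u \ge (1/2)^{1/(p-2)} M$ or similar. I expect the bookkeeping of these constants to be the only real work; conceptually it is an elementary ODE comparison combined with the Kirchhoff condition at the vertex.
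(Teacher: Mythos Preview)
Your first step contains a sign error that invalidates the entire approach. At an interior maximum $x_0$ you have $u''(x_0)\le 0$, but from $u'' = \lambda u - u^{p-1}$ this reads $\lambda M - M^{p-1}\le 0$, i.e.\ $M^{p-2}\ge \lambda$, which is a \emph{lower} bound on $M$, not an upper one. (The paper in fact uses exactly this lower bound elsewhere, in the proof of Proposition~\ref{asympt}.) The same remark applies at a vertex maximum once you have reduced to $u_e'(0)=0$: you again only obtain $M^{p-2}\ge\lambda$. So your ``easy case'' does not give the claimed inequality at all, for any sign of $\lambda$.

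Your fallback Sturm-type idea is morally pointed in the right direction but, as written, is circular: to say that $u^{p-2}-\lambda$ stays above $\pi^2/\ell^2$ on an interval of length $\ell$ you need a lower bound on $u$ there, and the size of the set $\{u\ge cM\}$ is precisely what is unknown. The factor $p/2$ does not arise from the mechanism you describe.

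The paper proceeds instead via the conserved first integral of the ODE (phase-plane analysis). On an edge containing the maximum point $q$, one finds a subinterval $[q,r]$ of length $a\ge e_0/2$ on which $u$ is monotone (this uses that if $M^{p-2}>p\lambda/2$ any interior local minimum would have to be negative). Writing the length $a$ as a time-map integral
\[
a=\int_{u(r)}^{M}\frac{du}{\sqrt{\tfrac{2}{p}(M^p-u^p)-\lambda(M^2-u^2)}}
\]
and using $1-t^p\ge 1-t^2$ for $t\in(0,1]$ yields $a\le \dfrac{\pi}{2}\big(\tfrac{2}{p}M^{p-2}-\lambda\big)^{-1/2}$, from which the bound with the explicit constants follows at once. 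The $\pi^2$ and the $p/2$ both come out of this elementary estimate, not from any comparison with $u\ge cM$.
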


\begin{proof}
	This can be checked by standard phase plane considerations as follows. Let $M := \|u\|_{\infty} >0$. If $M^{p-2} \le p\lambda/2$, there is nothing to prove, so we only deal with the case $M^{p-2} > p\lambda/2$. Let then $M$ be achieved in some edge $e$ at a point called $q$. Reversing, if necessary, the orientation of the coordinate along $e$, we can assume $u$ to be nonincreasing on an interval $[q,r]$ of length $a \ge e_0/2$. Indeed, if no such interval existed, then $u$ would attain a local minimum in the interior of $e$, which is impossible since $u>0$ on $\G$, while the condition  $M^{p-2} > p\lambda/2$ forces all local minima of $u$ to be strictly negative.
	
	Since $u$ solves \eqref{nlsG}, for every $x\in[q,r]$ there results
	\[
	\frac12 |u'(x)|^2 + \frac1p |u(x)|^p - \frac{\lambda}2 |u(x)|^2 =  \frac1p |u(q)|^p - \frac{\lambda}2 |u(q)|^2 =  \frac1p M^p - \frac{\lambda}2 M^2>0. 
	\]
	Then
	\[
	a = \int_q^r dx = \int_{u(r)}^M \frac{du}{\sqrt{\frac2p\left(M^p -u^p\right)- \lambda\left(M^2-u^2\right)}}
	\]
	or, setting $t = u/M$,
	\[
	a =  \int_{u(r)/M}^1 \frac{dt}{\sqrt{\frac2p M^{p-2}\left(1 -t^p\right)- \lambda\left(1-t^2\right)}}.
	\]
	Now, as $t \in (0,1]$, we have $1-t^p \ge 1-t^2$, so that 
	\[
	a \le  \frac1{\sqrt{\frac2p M^{p-2} -\lambda}}\int_{u(r)/M}^1 \frac{dt}{\sqrt{1-t^2}}\le \frac1{\sqrt{\frac2p M^{p-2} -\lambda}}\int_0^1 \frac{dt}{\sqrt{1-t^2}}
	= \frac{\pi}{2\sqrt{\frac2p M^{p-2}-\lambda}}\,,
	\]
	namely
	\[
	M^{p-2} \le \frac{p}2 \lambda + \frac{p\pi^2}{8a^2} \le \frac{p}2 \lambda + \frac{p\pi^2}{2e_0^2},
	\]
	since $a \ge e_0/2$.
\end{proof}

\begin{proof}[Proof of Lemma \ref{Properties_L}]
Point (a) is somehow standard, but we prove it for completeness. Let $\K$ be a given connected, bounded subset of $\G$, and let $\varphi$ solve
\begin{equation*}
	\begin{cases}
		-\varphi''=\lambda_1(\K) \varphi  & \text{on every edge of } \K\\
		\varphi >0 &\text{on }\K\\
		\sum_{e\succ \vv}\frac{d \varphi}{dx_e}(\vv)=0 & \text{for every vertex }\vv \text{ in } \K \backslash \partial \K\\
		\varphi(x) = 0 & \text{for }x \in \partial \K,
	\end{cases}
\end{equation*}
with $\lambda_1(\K)$  the first eigenvalue of $\displaystyle-\frac{d^2}{dx^2}$ on $\K$ with the above boundary conditions. Let $u \in H^1(\G)$ solve
\begin{equation*}
	\begin{cases}
		u'' + u^{p-1 }=\lambda u  & \text{on every } e\in\E_{\G}\\
		u >0 &\text{on }\G\\
		\sum_{e\succ \vv}\frac{d u}{dx_e}(\vv)=0 & \text{for every }\vv \in\V_\G,
	\end{cases}
\end{equation*}
for some $\lambda \in \R$. Multiplying the equation of $u$ by $\varphi$ and the equation of $\varphi$ by $u$  we get
$$- \int_{\K}u' \varphi ' \,dx+ \int_{\K} u^{p-1} \varphi\,dx = \lambda \int_{\K} u \varphi \quad\,dx \quad\text{and} \quad
-  \varphi' u_{| \partial \K} + \int_{\K}\varphi' u'\,dx = \lambda_1(\K) \int_{\K} u \varphi\,dx.$$
Note that $- \varphi' u_{| \partial \K} >0$, so that $ - \int_{\K}\varphi'u'\,dx > - \lambda_1(\K) \int_{\K}u \varphi\,dx.$ Coupling with the other equation yields
\[
\int_{\K} u^{p-1} \varphi\,dx  < (\lambda + \lambda_1(\K)) \int_{\K} u \varphi\,dx.
\]
Since the integrals are positive, $\lambda > - \lambda_1(\K)$. By the arbitrariness of $\K$, this gives $\lambda \geq - \lambda_{\G}.$
The proof of (b)  is evident for graphs with half-lines, whereas for periodic graphs we refer e.g. to \cite[Appendix A]{BDS}. To prove (c) it is enough to observe that, if $u\in S_\mu$, then it is strictly positive everywhere on $\G$ by definition. This is impossible if $\lambda=0$, because in this case all nonzero solutions on a half-line are restrictions of periodic functions (and thus not in $L^2(\G)$). 
To prove (d), let  $m>0$ be arbitrary and assume by contradiction that there exist sequences $\mu_n \to 0$ and $(u_n)_n \subset S_{\mu_n}$ solving \eqref{nlsG} with $\lambda_n$ such that $0 \leq \lambda_n <m$.
	  Since $u_n \in S_{\mu_n}$,
	\begin{equation}
	\label{interm}
	\|u'_n\|_{2}^2 \le  \|u'_n\|_{2}^2 +\lambda_n \|u_n\|_{2}^2 = \|u_n\|_{p}^p\,,
	\end{equation}
	which coupled with \eqref{eq:GNp} yields
	\[
	\|u'_n\|_{2}^2 \le  \|u_n\|_{p}^p \le  K_{p,\G}\mu_n^{\frac{p+2}{4}}\|u_n'\|_{2}^{\frac{p-2}{2}},
	\]
	namely
	\[
	1 \le K_{p,\G}\mu_n^{\frac{p+2}{4}}\|u_n'\|_{2}^{\frac{p-6}{2}}.
	\]
	Since $\mu_n \to 0$, this shows that $\|u_n'\|_{2} \to +\infty$, and then the same for $\|u_n\|_{p}$, by \eqref{interm}. Hence, by $\|u_n\|_{p}^p \le \|u_n\|_{\infty}^{p-2}\mu_n$, we see that $ \|u_n\|_{\infty} \to \infty$, and the contradiction is achieved  invoking \eqref{apriori}.
\end{proof}

\section{A general energy estimate for positive solutions}
\label{sec:lowerE}

The aim of this section is to prove an asymptotic estimate on the energy of small mass positive solutions of \eqref{nlsG}. 
The main result of the section is the following.
\begin{proposition}
	\label{asympt} Let $\G\in\mathbf{G}$ be either a noncompact graph with finitely many edges or a periodic graph. If $\G$ has no pendant, then
	\begin{equation}
	\label{ge1}
	\liminf_{\mu \to 0} \inf_{u \in S_\mu} \frac{E(u,\G)}{E(\phi_{\mu,1},\R)} \ge 1.
	\end{equation}
	If $\G\in{\mathbf{G}}$ has at least one pendant, then
	\begin{equation}
	\label{ge1pend}
	\liminf_{\mu \to 0} \inf_{u \in S_\mu} \frac{E(u,\G)}{E(\phi_{2\mu,1},\R^+)} \ge 1.
	\end{equation}
\end{proposition}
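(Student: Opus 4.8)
The plan is to argue through sequences: take arbitrary $\mu_n\to0^+$ and $u_n\in S_{\mu_n}$ with Lagrange multipliers $\lambda_n$, and show that, up to subsequences, $E(u_n,\G)\ge(1+o(1))E(\phi_{\mu_n,1},\R)$ when $\G$ has no pendant and $E(u_n,\G)\ge(1+o(1))E(\phi_{2\mu_n,1},\R^+)$ when it has one; since the $u_n$ are arbitrary, this gives \eqref{ge1} and \eqref{ge1pend}. First, Lemma~\ref{Properties_L}(c)--(d) yields $\lambda_n\to+\infty$. Setting $M_n:=\|u_n\|_\infty$, the chain $\lambda_n\mu_n\le\|u_n'\|_2^2+\lambda_n\mu_n=\|u_n\|_p^p\le M_n^{p-2}\mu_n$ together with Lemma~\ref{growth} gives $\lambda_n\le M_n^{p-2}\le\frac p2\lambda_n+\frac{p\pi^2}{2e_0^2}$, so $M_n\to+\infty$ and, passing to a subsequence, $\ell_n:=M_n^{p-2}/\lambda_n\to\ell\in[1,\frac p2]$. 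In particular the intrinsic scale $\lambda_n^{-1/2}\to0$ is eventually $\ll e_0$, so a concentration bump can meet at most one vertex.

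The first and main step is to show that necessarily $\ell=\frac p2$, i.e. $\nu_n:=\lambda_nM_n^{-(p-2)}\to\frac2p$. Let $q_n$ realize $M_n$. If $q_n$ lies on the closure of a half-line, the conserved quantity $\frac12u_n'^2+\frac1pu_n^p-\frac{\lambda_n}2u_n^2$ vanishes there (using $u_n\to0$ at infinity, and that at a vertex Kirchhoff forces the derivatives incident to the global maximum, each $\le0$, to vanish), forcing $M_n^{p-2}=\frac p2\lambda_n$. Otherwise $q_n$ is interior to a bounded edge, or a vertex all of whose edges are bounded, and $u_n$ has zero derivative at $q_n$ along the relevant edges, so there the conserved quantity equals $W_n:=\frac1pM_n^p-\frac{\lambda_n}2M_n^2=M_n^2\lambda_n(\frac{\ell_n}p-\frac12)$. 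If $\ell<\frac p2$, then $W_n<0$ for large $n$, so on a bounded edge $e$ incident to $q_n$ the function $u_n$ oscillates between two positive turning values, with rescaled period bounded and hence period $\asymp\lambda_n^{-1/2}$ in the original variable; a phase-plane estimate shows that each period carries $L^2$-mass $\asymp M_n^{(6-p)/2}$ while $e$ holds $\asymp|e|\lambda_n^{1/2}$ of them, so the mass of $u_n$ on $e$ is $\asymp|e|\lambda_n^{2/(p-2)}\to+\infty$, contradicting $\|u_n\|_2^2=\mu_n\to0$. Thus $\ell=\frac p2$. This is where the smallness of $\mu$ enters, and the period-versus-mass bookkeeping is the most delicate point.

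Next comes a blow-up. Set $v_n(y):=M_n^{-1}u_n$ with $y$ the arclength from $q_n$ dilated by $M_n^{(p-2)/2}$, so $v_n''+v_n^{p-1}=\nu_nv_n$ on the rescaled graph, $v_n(0)=\max v_n=1$, $\nu_n\to\frac2p$, Kirchhoff conditions at the rescaled vertices, and rescaled conserved quantities $M_n^{-p}W_n\to0$. Using $\|v_n\|_\infty=1$ and the conserved quantity to bound $v_n'$, one gets (up to a subsequence) $v_n\to v_\infty$ in $C^2_{\mathrm{loc}}$ of a limit graph which, because $e_0>0$ lets only one vertex survive the dilation, is $\R$, a half-line $\R^+$ (possible only if $q_n$ approaches a pendant tip), or a star of $k\ge2$ half-lines; $v_\infty\ge0$ solves $v''+v^{p-1}=\frac2pv$ with vanishing conserved quantity on each edge, hence decays at every end, and since $\frac2p$ is precisely the multiplier whose soliton has peak $1$, $v_\infty$ is identified explicitly (a soliton on $\R$, half a soliton on $\R^+$, a configuration of soliton halves on a star). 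Undoing the scaling, this bump carries $L^2$-mass $m_n\to0$ and an energy which, by a computation based on \eqref{eq:alfabeta}, \eqref{eq:Emu} and the evenness of the soliton, equals $(2/k)^{2\beta}E(\phi_{m_n,1},\R)(1+o(1))$, where $k$ is the degree of the concentration vertex ($k=2$ for an interior point). Since $2\beta=\frac{2(p-2)}{6-p}<0$, the factor $(2/k)^{2\beta}$ is $\ge1$ for every $k\ge2$, and only in the pendant case $k=1$ does it become $2^{2\beta}$, i.e. $E(\phi_{m_n,1},\R)$ is replaced by $E(\phi_{2m_n,1},\R^+)$; this is exactly where supercriticality, through the sign of $\beta$, makes the comparison go the right way.

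Finally, a concentration-compactness argument closes the proof. Solutions on half-lines decay exponentially at rate $\sqrt{\lambda_n}$, and $\G$ has finitely many edges (for a periodic graph one first uses translation invariance to keep a maximum of $u_n$ in a fixed fundamental domain, so no mass drifts off along the periodic direction), so the mass of $u_n$ splits as $n\to\infty$ into finitely many bumps of the above type, of masses $m_1^{(n)},\dots,m_r^{(n)}$ with $\sum_im_i^{(n)}=\mu_n(1+o(1))$, mutually far apart on their own scales, so that interaction terms are negligible and $E(u_n,\G)=\sum_i(2/k_i)^{2\beta}E(\phi_{m_i^{(n)},1},\R)(1+o(1))$. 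If $\G$ has no pendant all $k_i\ge2$, so using $(2/k_i)^{2\beta}\ge1$, the identity $E(\phi_{m,1},\R)=\theta_pm^{2\beta+1}$ with $2\beta+1=\frac{p+2}{6-p}<0$, and the elementary super-additivity $\sum_im_i^{2\beta+1}\ge(\sum_im_i)^{2\beta+1}$, one obtains $E(u_n,\G)\ge(1+o(1))\theta_p\mu_n^{2\beta+1}=(1+o(1))E(\phi_{\mu_n,1},\R)$, i.e. \eqref{ge1}; if $\G$ has a pendant, bounding instead $(2/k_i)^{2\beta}\ge2^{2\beta}$ gives $E(u_n,\G)\ge(1+o(1))2^{2\beta}E(\phi_{\mu_n,1},\R)=(1+o(1))E(\phi_{2\mu_n,1},\R^+)$, i.e. \eqref{ge1pend}. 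The principal obstacles are thus: (i) the mass/oscillation estimate forcing $M_n^{p-2}/\lambda_n\to\frac p2$; (ii) making the concentration/splitting bookkeeping rigorous (no lost mass, no adverse interactions, including secondary bumps at lower heights, which one checks still diverge); and (iii) the explicit identification of the blow-up profile at a higher-degree vertex together with the verification that its energy dominates that of the soliton of equal mass.
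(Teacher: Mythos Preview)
Your strategy is sound and reaches the same conclusion, but it is organised quite differently from the paper's proof, and the paper's route sidesteps precisely the obstacles you flag at the end. The paper also fixes $\mu_n\to0$, $u_n\in S_{\mu_n}$ with $\lambda_n\to+\infty$, but instead of blowing up at the global maximum and then running a concentration--compactness iteration, it introduces the superlevel set $A_n=\{u_n>1\}$ and shows that its connected components $B_i^n$ are finitely many, have measure $\to0$, each contains at most one vertex, and $E(u_n,\G\setminus A_n)\ge-\mu_n/p=o(1)$. The decisive simplification is that the paper then analyses only the component $B_n$ of \emph{minimum} energy: once one proves $E(u_n,B_n)\ge E(\phi_{\mu_n,1},\R)(1+o(1))$, all other components have at least this (positive) energy, so the sum over components is automatically $\ge E(u_n,B_n)$ and no bump-by-bump bookkeeping, interaction control, or finiteness of the decomposition is needed. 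This disposes of your obstacle (ii) entirely. For the single component $B_n$, the paper rescales by $\lambda_n$ rather than by $M_n$; since $u_n=1$ on $\partial B_n$, the rescaled boundary value is $\lambda_n^{-1/(p-2)}\to0$, and Lemma~\ref{growth} bounds the rescaled conserved quantity by $O(\lambda_n^{-1})$, so phase--plane continuity forces convergence to the homoclinic without a separate argument that $M_n^{p-2}/\lambda_n\to p/2$ (your obstacle (i) is thus absorbed into Lemma~\ref{growth}). Your obstacle (iii), the vertex case, is handled exactly as you suggest: the rescaled limit is a positive solution on a star $S_k$ with $k\ge3$, and the known inequality $E(w,S_k)\ge E(\phi_{\nu,1},\R)$ for such solutions gives the comparison. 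In short, your blow-up/splitting scheme can be made to work, but the level-set decomposition with the ``minimum-energy component'' trick is both shorter and avoids the delicate points you identify.
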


\begin{proof}
	We start proving \eqref{ge1}. If it is false, there exist $\delta \in (0,1)$, a sequence $\mu_n \to 0$ and a sequence $(u_n) \subset S_{\mu_n}$ such that, for every $n$, 
	\begin{equation}
	\label{contr}
	\frac{E(u_n,\G)}{E(\phi_{\mu_n,1},\R)} \le 1-\delta.
	\end{equation}
	Each $u_n$ solves \eqref{nlsG} for some $\lambda_n \geq 0$. In view of Lemma \ref{Properties_L}(d) we have that $\lambda_n \to +\infty$ as $n \to \infty$.
	To proceed, consider a point $q_n$ in some edge $e_n$ where $u_n$ attains a local maximum. Since $u_n$ solves \eqref{nlsG}, it must be $u_n(q_n) \ge \lambda_n^{\frac{1}{p-2}}$. If equality holds,  $u_n$ is constant on $e_n$ by uniqueness in the Cauchy problem, and $e_n$ is a bounded edge. But then
	\[
	\mu_n = \int_\G |u_n|^2\dx \ge  \int_{e_n} |u_n|^2\dx = \lambda_n^{\frac{2}{p-2}}|e_n| \ge  \lambda_n^{\frac{2}{p-2}}|e_0|,
	\]
	which is impossible since $\lambda_n \to +\infty$ and $\mu_n \to 0$. Therefore, $u_n$ is strictly larger than $\lambda_n^{\frac{1}{p-2}}$ at all its local maximum points, which are therefore nondegenerate. Since $u_n \in H_{\mu_n}^1(\G)$, outside a compact set $K_n \subset \G$, $u_n$ is smaller than some number, say $1$ (certainly smaller than $\lambda_n^{\frac{1}{p-2}})$, and thus all local maximum points of $u_n$ are in $K_n$. Since by the preceding argument they are also isolated, it then follows that each $u_n$ can have at most a finite number of maximum points.
	
	Since $\mu_n \to 0$, we can also assume that $\min_e u_n < 1$ for every $e \in \E_\G$ and every $n$.
	We then set
	\[
	A_n = \{x \in \G \mid u_n(x) >1\}
	\]
	and we note that no local minimum point of $u_n$ can be in $A_n$.
	Each $A_n$  is the disjoint union of a finite number of connected open subsets $B_1^n,\dots, B_{k_n}^n$ of $\G$ such that 
	\begin{itemize}
		\item[(i)] $|B_i^n| \to 0$ for every $i=1,\dots,k_n$ as $n\to \infty$;
		\item[(ii)] each $B_i^n$ contains at least one local maximum point of $u_n$;
		\item[(iii)] each $B_i^n$ contains at most one vertex of $\G$;
		\item[(iv)] if $B_i^n$ contains more than one local maximum point of $u_n$, then it contains exactly one vertex of $\G$.
	\end{itemize}
	Indeed, (i) is obvious as $u_n \ge 1$ on each $B_i^n$ and $\mu_n \to 0$. Moreover, 
	since each $B_i^n$ is non-empty, bounded, contained in $A_n$ and $u = 1$ on $\partial B_i^n$, it contains at least one local maximum point of $u_n$, i.e. property (ii) holds.
	Property (iii) follows by the fact that, if some $B_i^n$ contains at least two vertices of $\G$, then, being connected, it contains at least a whole edge of $\G$, which is impossible because the minimum of $u_n$ on each edge of $\G$ is strictly smaller than 1.
	Finally, to prove (iv),  observe that, if two local maximum points of $u_n$ belong to the same edge of $\G$, then the fact that $u_n$ solves \eqref{nlsG} implies that $u_n$ has a local minimum point halfway between the two local maximum points. Moreover, by phase plane analysis it is easy to see that the value attained by $u_n$ at this local minimum point coincides with the minimum value of $u_n$ on the whole edge. Hence, $u_n$ is smaller than or equal to $1$ at least at one point between two consecutive local maximum points belonging to the same edge, in turn implying that these local maximum points do not belong to the same $B_i^n$. Therefore if $B_i^n$ contains at least two local maximum points, these must belong to different edges. By connectedness, then, $B_i^n$ must contain a vertex, and by (iii) this vertex is unique.
	
	Now, since $0<u_n\le 1$ on $\G\setminus A_n$, it follows that 
	\begin{equation}
	\label{eq:Ancomp}
	E(u_n,\G\setminus A_n) \ge  -\frac1p  \|u_n\|_{L^p(\G\setminus A_n)}^p \ge  - \frac1p  \|u_n\|_{L^2(\G\setminus A_n)}^2 \ge -\frac1p \mu_n = o(1)
	\end{equation}
	as $n \to \infty$.
	
	For every $n$, we denote shortly by $B_n$ a set $B_{i_n}^n$ such that
	\begin{equation}
	\label{Bn}
	E(u_n, B_{i_n}^n) = \min_i E(u_n, B_i^n),
	\end{equation}
	which exists since for every $n$ the number of $B_i^n$'s is finite. Up to subsequences, we can assume that either every $B_n$ contains exactly one vertex $\vv_n$, or that no $B_n$ contains a vertex, and we treat the two cases separately.
	
	Let us first suppose that $B_n$ contains no vertex of $\G$, for every $n$. In this case, for each $n$, $B_i^n$ is an interval $(-a_n,a_n)$ contained in an edge $e_n$ and the restriction of $u_n$ to $[-a_n,a_n]$ is strictly positive,  symmetric with respect to $0$ and decreasing, say,  on $[0,a_n]$.  It satisfies
	\[
	\begin{cases}
	u_n''+u_n^{p-1}=\lambda_n u_n & \text{on }[-a_n,a_n]\,,\\
	u_n(0)= \|u_n\|_{L^\infty(-a_n,a_n)} > \lambda_n^{\frac{1}{p-2}},& \\ 
	u_n(\pm a_n) =1 &
	\end{cases}
	\]
	and 
	\[
	\frac12 |u_n'(x)|^2 + \frac1p |u_n(x)|^p - \frac{\lambda_n}2 |u_n(x)|^2 = C_n \qquad\text{ for every } x \in [-a_n,a_n]
	\]
	for some constant $C_n$. By Lemma \ref{growth},  we see that, as $n \to \infty$,
	\begin{equation}
	\label{Cn}
	C_n  = \frac12  |u_n(0)|^2\left(\frac2p  |u_n(0)|^{p-2} -\lambda _n \right)
	\le \frac12  |u_n(0)|^2 \left(\lambda_n + \frac{\pi^2}{e_o^2} -\lambda _n \right) = 
	O\left(\lambda_n^{\frac{2}{p-2}}\right).
	\end{equation}
	We now set 
	\[
	v_n(x):=\lambda_n^{-\frac1{p-2}}u_n\left(x/\sqrt{\lambda_n}\right),
	\]
	so that $v_n$ is a strictly positive function defined on the interval $\left[-\sqrt{\lambda_n}a_n,\sqrt{\lambda_n}a_n\right]$, symmetric with respect to $0$ and  decreasing on $\left[0,\sqrt{\lambda_n}a_n\right]$. It satisfies
	\begin{equation}
	\label{eq:v1}
	\begin{cases}
	v_n''+v_n^{p-1}=v_n & \text{on }\left[-\sqrt{\lambda_n}a_n,\sqrt{\lambda_n}a_n\right]\,,\\
	v_n(0)= \|v_n\|_{L^\infty\left(-\sqrt{\lambda_n} a_n,\sqrt{\lambda_n} a_n\right)}\ge 1, &\\ 
	v_n\left(\pm \sqrt{\lambda_n} a_n\right) =\lambda_n^{-\frac1{p-2}} & 
	\end{cases}
	\end{equation}
	and
	\[
	\frac12 |v_n'(x)|^2 + \frac1p |v_n(x)|^p - \frac12 |v_n(x)|^2 = \lambda_n^{-\frac{p}{p-2}} C_n \qquad\text{ for every } x \in \left[-\sqrt{\lambda_n}a_n,\sqrt{\lambda_n}a_n\right].
	\]
	Then, recalling \eqref{Cn}, we deduce that, as $n\to \infty$,
	\[
	\left|v'\left(\pm \sqrt{\lambda_n}a_n\right)\right|^2 =  2\lambda_n^{-\frac{p}{p-2}} C_n + \left|v\left(\pm \sqrt{\lambda_n}a_n\right)\right|^2 - \frac2p \left|v\left(\pm \sqrt{\lambda_n}a_n\right)\right|^p = O\left(\lambda_n^{-1}\right) + o(1) = o(1).
	\]
	Summing up, $v_n$ satisfies \eqref{eq:v1} and  $\left(v_n(\pm(\sqrt{\lambda_n}a_n)),v_n'(\pm\sqrt{\lambda_n}a_n)\right) \to (0,0)$  as $n\to+\infty$. By continuity in the phase plane, it then follows that  $\sqrt{\lambda_n}a_n \to \infty$ and that
	\begin{equation}
	\label{scale1}
	E\left(v_n,[-\sqrt{\lambda_n}a_n,\sqrt{\lambda_n}a_n]\right)=E(\phi,\R)+o(1)\qquad\text{as }n\to+\infty\,,
	\end{equation}
	where $\phi$ is the unique solution in $H^1(\R)$ of 
	\begin{equation}
	\label{eq:nlsR1}
	\begin{cases}
	\phi''+\phi^{p-1}=\phi & \text{on }\R\\
	\phi(0)=\|\phi\|_{L^\infty(\R)} \ge 1.
	\end{cases}
	\end{equation}
	Note that $\phi=\phi_{\nu,1}$ for $\displaystyle\nu:=\lim_{n\to+\infty}\|v_n\|_{L^2\left(-\sqrt{\lambda_n} a_n,\sqrt{\lambda_n} a_n\right)}^2>0$. Since by direct computations 
	\[
	E(u_n,B_n)=\lambda_n^{\frac{p+2}{2(p-2)}}E\left(v_n,[-\sqrt{\lambda_n}a_n,\sqrt{\lambda_n}a_n]\right),
	\]
	by \eqref{scale1} it follows
	\[
	\begin{split}
	E(u_n,B_n) \ge \lambda_n^{\frac{p+2}{2(p-2)}}E(\phi_{\nu,1}, \R) + o\left( \lambda_n^{\frac{p+2}{2(p-2)}}\right) = &\,E(\phi_{\nu_n,1},\R) + o( E(\phi_{\nu_n,1},\R)) \\
	\ge&\, E(\phi_{\mu_n,1},\R) + o( E(\phi_{\mu_n,1},\R)),
	\end{split}
	\]
	the last inequality coming by the fact that $\nu_n:=\lambda_n^{\frac{6-p}{2(p-2)}}\nu\leq\mu_n$ by definition of $\nu$ and 
	\[
	\|v_n\|_{L^2\left(-\sqrt{\lambda_n} a_n,\sqrt{\lambda_n} a_n\right)}^2=\lambda_n^{\frac{p-6}{2(p-2)}}\|u_n\|_{L^2(B_n)}^2\leq\lambda_n^{\frac{p-6}{2(p-2)}}\mu_n\qquad\forall n\,.
	\] 
	Hence, by \eqref{eq:Ancomp},
	\[
	E(u_n, \G) =  E(u_n, A_n) + E(u_n, \G\setminus A_n)  \ge    E(u_n, B_n) +o(1) \ge  E(\phi_{\mu_n,1},\R) + o( E(\phi_{\mu_n,1},\R)),
	\]
	so that
	\[
	\frac{E(u_n, \G) }{ E(\phi_{\mu_n,1},\R) } \ge 1 + o(1),
	\]
	which violates \eqref{contr} when $n\to \infty$ and completes the proof in case $B_n$ contains no vertex of $\G$.
	
	We now assume (keeping in mind \eqref{Bn}), that every $B_n$ contains a (unique) vertex $\vv_n$. Thus, each $B_n$ is the union of 
	$k_n \ge 3$ line segments glued together at this vertex.
	
	Hence, letting $k_n:=\text{deg}(\vv_n)$, there exist $a_1^n,\dots,a_{k_n}^n>0$ such that $B_n$ can be identified with the intervals $[0,a_1^n),\dots,[0,a_{k_n}^n)$ glued together at the origin (identified with $\vv_n$). Since by construction $B_n$ contains at least one local maximum point of $u_n$ on $\G$, there are $i_n \ge 0$ intervals $[0,a_1^n),\dots,[0,a_{i_n}^n)$ on which $u_n$ is increasing from $0$ to a local maximum point and then decreasing from this maximum point to the end of the interval, whereas on the intervals $[0,a_{i_n+1}^n),\dots,[0,a_{k_n}^n)$ $u_n$ is decreasing from $0$ to the end of the interval. Note that $i_n<k_n$, because otherwise $u_n$ would have a local minimum point at $\vv_n$, and by the Kirchhoff condition this would imply $u_n(\vv_n)\le 1$, contradicting $\vv_n\in B_n$.
	
	Observe that, if
	\begin{equation}
	\label{eq:uvpiccola}
	\limsup_{n\to+\infty} \lambda_n^{-\frac1{p-2}}u_n(\vv_n)=0,
	\end{equation}
	arguing as in the previous part of the proof on each interval (if any) $[0,a_i^n]$, $i = 1, \dots, i_n$,  yields, as $n\to+\infty$,
	\[
	E\left(u_n,[0,a_i^n]\right) = \lambda_n^{\frac{p+2}{2(p-2)}}E(\phi_{\nu,1},\R)+o\left(\lambda_n^{\frac{p+2}{2(p-2)}}\right)
	\] 
	for $\nu\leq\lambda_n^{\frac{p-6}{2(p-2)}}\mu_n$. On the other hand, on each interval $[0,a_i^n]$ with $i = i_n+1, \dots, k_n$, if $a_i^n = o\left(1/\sqrt{\lambda_n}\right)$, then
	\[
	E(u_n,[0,a_i^n]) \ge -\int_0^{a_n}|u_n|^p\dx \ge o \left(\lambda_n^{\frac{p}{p-2}}\right)a_i^n = o \left(\lambda_n^{\frac{p+2}{2(p-2)}}\right).
	\]
	If, on the contrary, $\sqrt{\lambda_n}a_i^n$ is bounded away from 0, then $E(u_n, [0,a_i^n])>0$ (this can be seen e.g. combining the scaling procedure used in the previous part of the proof with \cite[Lemma 4.5]{DTjmpa}). All in all, this yields, as above, 
	\[
	E(u_n, B_n)   \ge  E(\phi_{\mu_n,1},\R) + o( E(\phi_{\mu_n,1},\R)).
	\]
	Conversely, if \eqref{eq:uvpiccola} does not hold, there exists $K>0$ 
	such that along a subsequence (not relabeled) we have
	\[
	\lambda_n^{-\frac1{p-2}}u_n(\vv_n) \geq K\qquad\forall n\,.
	\]
	Considering the restriction of $u_n$ to $\overline{B_n}$, set then $v_n(x):=\lambda_n^{-\frac1{p-2}}u_n(x/\sqrt{\lambda_n})$. The function $v_n$ is defined on a star graph with $k_n$ bounded edges, identified with the intervals $\left[0,\sqrt{\lambda_n}a_i^n\right]$, $i= 1, \dots, k_n$. Moreover, on each edge, $v_n$ is either increasing from $0$ to a local maximum point and then decreasing to the end of the edge or decreasing, and it satisfies
	\[
	\begin{cases}
	v_n''+v_n^{p-1}=v_n & \text{on each } \left[0,\sqrt{\lambda_n}a_i^n\right]\\
	v_n(0) \ge K  & \\
	\sum_{i=1}^{k_n}\frac{dv_n}{dx_i}(0)=0,
	\end{cases}
	\]
	where $\displaystyle\frac{dv_n}{dx_i}(0)$ denotes the outgoing derivative of $v_n$ at $0$ along $\left[0,\sqrt{\lambda_n}a_i^n\right]$. Furthermore, $v_n\left(\sqrt{\lambda_n}a_i^n\right)\to0$ as $n\to+\infty$ for every $i$.

	Arguing exactly as in the previous part of the proof, we obtain again that  $|v_n'(\sqrt{\lambda_n}a_j^n)|\to0$ and $\sqrt{\lambda_n}a_i^n\to+\infty$ as $n\to+\infty$ for every $i$. By continuity in the phase plane, this implies that the restriction of $v_n$ to each interval $[0,a_i^n)$ converges to the restriction of the function $\phi$ of \eqref{eq:nlsR1} to a suitable interval of the form $[b_i,+\infty)$. Together with the fact that $v_n$ satisfies the Kirchhoff condition at the vertex of the star graph, this implies that 
	\[
	E\left(v_n,\sqrt{\lambda_n}B_n\right)=E(w_n,S_{k_n})+o(1)\qquad\text{as }n\to+\infty\,,
	\]
	where $S_{k_n}$ is the infinite star graph with $k_n$ half-lines and $w_n$ is a $H^1(S_{k_n})$ positive solution of 
	\[
	\begin{cases}
	w_n''+w_n^{p-1}=w_n & \text{on }S_{k_n}\\
	w_n(0) \ge K & \\
	\sum_{i=1}^{k_n}\frac{dw_n}{dx_i}(0)=0.
	\end{cases}
	\]
	Since $k_n\geq 3$ for every $n$, it is well known that $E(w_n,S_{k_n}) \ge E(\phi_{\nu_n,1},\R)$ (see e.g. \cite{ACFN}), where $\nu_n \le \mu_n$ is the mass of $w_n$.
	Hence, recalling that $\displaystyle E(u_n,B_n)=\lambda_n^{\frac{p+2}{2(p-2)}}E\left(v_n,\sqrt{\lambda_n}B_n\right)$, we conclude as in the previous case. This completes the proof of \eqref{ge1}.
	
	The proof of \eqref{ge1pend} is completely analogous to that of \eqref{ge1}. The only difference is that if $u_n$ ``concentrates'' on a pendant, then one must use the half-soliton $\phi_{2\mu_n,1}$ and its level $E(\phi_{2\mu_n,1}, \R^+) <E(\phi_{\mu_n,1},\R)$ in the asymptotic estimates.
\end{proof}
\begin{remark}
	\label{rem:estErho}
	If $\G$ has no pendant, it is clear that the proof of Proposition \ref{asympt}  can be easily modified to check that
	\[
	\liminf_{\mu\to0}\inf_{u\in S_{\mu,\rho}}\frac{E_\rho(u,\G)}{E_\rho(\phi_{\mu,\rho},\R)}\geq1\qquad\forall \rho\in\left[1/2,1\right],
	\]
	where $S_{\mu,\rho}$ is the set of all solutions of the problem
	\[
	\begin{cases}
	u''+\rho u^{p-1}=\lambda u & \text{on every }e\in\E_\G\\
	u\in H_\mu^1(\G),\quad u>0 &\text{on }\G\\
	\sum_{e\succ \vv}\frac{du}{dx_e}(\vv)=0 & \text{for every }\vv\in\V_\G\,.
	\end{cases}
	\]
	Observe that, since $u\in S_\mu$ if and only if $\rho^{-\frac1{p-2}}u\in S_{\rho^{-\frac2{p-2}}\mu,\rho}$, this implies that, given $\G$ with no pendant, $p>6$ and $\varepsilon>0$, there exists $\overline{\mu}_{\G,p,\varepsilon}>0$ independent of $\rho\in\left[1/2,1\right]$ for which 
	\[
	\inf_{u\in S_{\mu,\rho}}\frac{E_\rho(u,\G)}{E_\rho(\phi_{\mu,\rho},\R)}\geq 1-\varepsilon\qquad\forall\mu\in\left(0,\overline{\mu}_{\G,p,\varepsilon}\right],\,\forall \rho\in\left[1/2,1\right]\,.
	\]
	Analogously to Proposition \ref{asympt}, the same is true for graphs with a pendant, replacing $E_\rho(\phi_{\mu,\rho},\R)$ with $E_\rho(\phi_{2\mu,\rho},\R^+)$.
\end{remark}

Recalling (see \eqref{eq:Emu}) that both $E(\phi_{\mu,1},\R) \to + \infty$ and $E(\phi_{2\mu,1},\R^+) \to + \infty$ as $\mu \to 0$, the following is a direct consequence of Proposition \ref{asympt}.
\begin{corollary}
\label{positivity_ok}
Let $\G\in\mathbf{G}$ be either a noncompact graph with finitely many edges or a periodic graph.  There exists $\tilde{\mu} = \tilde{\mu}(\G, p)>0$ such that, if $\mu \in (0, \tilde{\mu}]$, then every solution $u \in H_\mu^1(\G)$ to \eqref{nlsG} (or \eqref{nlsG} in which $u^{p-1}$ is replaced by $\rho u^{p-1}$) satisfies 
$E(u, \G) > 0$ (respectively $E_{\rho}(u, \G) > 0$).
\end{corollary}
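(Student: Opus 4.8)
The plan is to read off the statement directly from Proposition \ref{asympt} and the explicit formula \eqref{eq:Emu}, treating separately the case in which $\G$ has a pendant and the case in which it does not.

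Suppose first that $\G$ has no pendant. By \eqref{ge1} we have $\liminf_{\mu\to0}\inf_{u\in S_\mu}E(u,\G)/E(\phi_{\mu,1},\R)\ge1$, so by the very definition of $\liminf$ there is $\tilde{\mu}_0>0$ such that $\inf_{u\in S_\mu}E(u,\G)/E(\phi_{\mu,1},\R)\ge \tfrac12$ for every $\mu\in(0,\tilde{\mu}_0]$ (when $S_\mu=\emptyset$ the assertion of the corollary is vacuously true for that $\mu$, and the inequality holds with the convention $\inf\emptyset=+\infty$). Since \eqref{eq:Emu} gives $E(\phi_{\mu,1},\R)=\theta_p\,\mu^{2\beta+1}>0$, it follows that $E(u,\G)\ge\tfrac12 E(\phi_{\mu,1},\R)>0$ for every $u\in S_\mu$ and every $\mu\in(0,\tilde{\mu}_0]$; as any solution of \eqref{nlsG} lying in $H_\mu^1(\G)$ belongs by definition to $S_\mu$, this is exactly the claim for $E$. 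If instead $\G$ has a pendant, the same argument works verbatim, now using \eqref{ge1pend} together with the fact, again from \eqref{eq:Emu}, that $E(\phi_{2\mu,1},\R^+)=\theta_p\,(2\mu)^{2\beta+1}>0$.

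For the functionals $E_\rho$ with $\rho\in[1/2,1]$, I would invoke Remark \ref{rem:estErho}: with the choice $\varepsilon=\tfrac12$ it provides a threshold $\overline{\mu}_{\G,p,\varepsilon}>0$, \emph{independent of} $\rho$, such that $\inf_{u\in S_{\mu,\rho}}E_\rho(u,\G)/E_\rho(\phi_{\mu,\rho},\R)\ge\tfrac12$ (respectively with $E_\rho(\phi_{2\mu,\rho},\R^+)$ in place of $E_\rho(\phi_{\mu,\rho},\R)$ when $\G$ has a pendant) for all $\mu\le\overline{\mu}_{\G,p,\varepsilon}$ and all $\rho\in[1/2,1]$. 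Combining this with $E_\rho(\phi_{\mu,\rho},\R)=\theta_p\,\rho^{4/(6-p)}\mu^{2\beta+1}>0$ from \eqref{eq:Emu} gives $E_\rho(u,\G)>0$ for every such $u$, $\mu$ and $\rho$. Setting $\tilde{\mu}:=\min\{\tilde{\mu}_0,\overline{\mu}_{\G,p,1/2}\}>0$ then yields a single $\tilde{\mu}=\tilde{\mu}(\G,p)$ that works simultaneously for $E$ and for every $E_\rho$ with $\rho\in[1/2,1]$.

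There is no genuine difficulty here: the corollary is pure bookkeeping on top of Proposition \ref{asympt} and Remark \ref{rem:estErho}, all the analytic content — the phase-plane/concentration analysis ruling out the formation of low-energy bumps — having already been carried out in the proof of Proposition \ref{asympt}. The only point deserving a moment's attention is that the threshold $\tilde{\mu}$ must be taken uniform in $\rho\in[1/2,1]$, which is precisely what the $\rho$-independence in Remark \ref{rem:estErho} secures.
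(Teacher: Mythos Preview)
Your argument is correct and is exactly the route the paper takes: the corollary is stated there as an immediate consequence of Proposition~\ref{asympt} (together with \eqref{eq:Emu}), and your write-up simply spells out that deduction, correctly invoking Remark~\ref{rem:estErho} for the $\rho$-uniform threshold. One harmless slip: $E(\phi_{2\mu,1},\R^+)=\tfrac12 E(\phi_{2\mu,1},\R)=\tfrac12\theta_p(2\mu)^{2\beta+1}$, not $\theta_p(2\mu)^{2\beta+1}$, but this does not affect the positivity conclusion.
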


\section{Mountain pass structure and a priori estimates}
\label{sec:apriori}

In this section we introduce the mountain pass geometry that will be exploited to prove Theorems \ref{thm:exHalf}--\ref{thm:exper} and we derive topology-driven a priori estimates on the mountain pass level.

We start with the next lemma, analogous to those in \cite{BCJS_Non,CJS}, that provides the precise description of the mountain pass structure we will consider.
\begin{lemma}
	\label{lem:MPgeom}
	Let $\G\in\mathbf{G}$ be either a noncompact graph with finitely many edges or a periodic graph. Then, for every $p>6$ and every $\mu>0$, there exists $\delta>0$, depending on $\G,p,\mu$ but not on $\rho\in\left[1/2,1\right]$, such that
	\[
	c_\rho(\G):=\inf_{\gamma\in\Gamma}\max_{t\in[0,1]}E_\rho(\gamma(t),\G)>\max\left\{\sup_{u\in A_\delta}E_\rho(u,\G),\sup_{u\in B}E_\rho(u,\G)\right\}\qquad\forall \rho\in\left[1/2,1\right]\,,
	\]
	where $\Gamma:=\left\{\gamma\in C([0,1],H_\mu^1(\G))\,:\,\gamma(0)\in A_\delta,\; \gamma(1)\in B
	\right\}$ and  
	\[
	A_\delta:=\left\{u\in H_\mu^1(\G)\,:\,\|u'\|_2^2\leq\delta\right\}\,,\qquad B:=\left\{u\in H_\mu^1(\G)\,:\, E_{\frac12}(u,\G)<0\right\}\,.
	\]
\end{lemma}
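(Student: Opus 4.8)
The plan is to verify the three ingredients of a mountain pass geometry, uniformly in $\rho\in[1/2,1]$: (1) the set $B$ of negative-$E_{1/2}$ functions is nonempty and $A_\delta$ connects to it inside $H_\mu^1(\G)$ (so that $\Gamma\neq\emptyset$), (2) $E_\rho$ stays below the mountain pass threshold on $A_\delta$ provided $\delta$ is small, and (3) $E_\rho$ is nonpositive on $B$ while the mountain pass level $c_\rho(\G)$ is strictly positive. The key point making everything uniform in $\rho$ is that $\tfrac12\|u'\|_2^2-\tfrac\rho p\|u\|_p^p\ge E(u,\G)$ for every $\rho\le 1$, so any lower bound proved for $E=E_1$ transfers to all $E_\rho$, $\rho\in[1/2,1]$, and conversely $E_\rho\le E_{1/2}$.

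First I would establish the \emph{separation of $A_\delta$ from the threshold}. On $A_\delta$ one has $\|u'\|_2^2\le\delta$, and by the Gagliardo--Nirenberg inequality \eqref{eq:GNp}, $\|u\|_p^p\le K_{p,\G}\mu^{\frac p4+\frac12}\delta^{\frac p4-\frac12}$, so $|E_\rho(u,\G)|\le\frac\delta2+\frac{K_{p,\G}}{p}\mu^{\frac p4+\frac12}\delta^{\frac p4-\frac12}=:\eta(\delta)\to 0$ as $\delta\to0$, independently of $\rho$. So $\sup_{A_\delta}E_\rho\le\eta(\delta)$. Next, the \emph{mountain pass level is bounded below away from zero}: any $\gamma\in\Gamma$ starts in $A_\delta$ and ends in $B$, where $E_{1/2}(\gamma(1),\G)<0$, hence in particular $E_\rho(\gamma(1),\G)\le E_{1/2}(\gamma(1),\G)<0$; meanwhile, using again \eqref{eq:GNp}, on the sphere $\|u'\|_2^2=s$ we get $E_\rho(u,\G)\ge \frac12 s-\frac1p K_{p,\G}\mu^{\frac p4+\frac12}s^{\frac p4-\frac12}$, which since $p>6$ is strictly positive for $s$ in some interval $(0,s_0]$ and attains a positive maximum, call it $m_0>0$, at some $s_\ast\in(0,s_0)$, with $m_0$ and $s_\ast$ independent of $\rho$. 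Choosing $\delta<s_\ast$, every path in $\Gamma$ starts with $\|u'\|_2^2\le\delta<s_\ast$ and must reach $B$, where (shrinking $\delta$ further if needed so that $B\cap\{\|u'\|_2^2\le\delta\}=\emptyset$, which holds since on $A_\delta$ one has $E_{1/2}\ge -\eta(\delta)$ but also $E_{1/2}$ is actually $\ge$ the small positive quantity for $s$ small — more simply, $\gamma(1)\in B$ forces $\|\gamma(1)'\|_2$ large) the quantity $\|u'\|_2^2$ has grown past $s_\ast$; by continuity and the intermediate value theorem there is $t$ with $\|\gamma(t)'\|_2^2=s_\ast$, so $\max_t E_\rho(\gamma(t),\G)\ge m_0$. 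Hence $c_\rho(\G)\ge m_0>0$ for all $\rho\in[1/2,1]$. Finally, fixing $\delta$ small enough that $\eta(\delta)<m_0$ gives $c_\rho(\G)\ge m_0>\eta(\delta)\ge\sup_{A_\delta}E_\rho$ and $c_\rho(\G)>0>\sup_B E_{1/2}\ge\sup_B E_\rho$, which is the claim.

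The one genuinely graph-dependent ingredient, and the step I expect to be the main obstacle, is showing that $B\neq\emptyset$ and that $\Gamma\neq\emptyset$, i.e. that some function of mass $\mu$ can be connected by a continuous path in $H_\mu^1(\G)$ to a function with $E_{1/2}<0$ (and that one can pick the starting point in $A_\delta$). Since the graph is noncompact (it has a half-line, or is periodic, or in any case has infinitely many/long enough edges under the $e_0>0$ hypothesis), the standard device works: take a fixed profile $w\in H_\mu^1(\G)$ supported on a half-line or on a long path of the graph, and consider its $L^2$-preserving dilations $w_\sigma(x)=\sigma^{1/2}w(\sigma x)$ along that half-line (or suitably along a periodic direction), for which $\|w_\sigma'\|_2^2=\sigma^2\|w'\|_2^2$ and $\|w_\sigma\|_p^p=\sigma^{p/2-1}\|w\|_p^p$; since $p>6$, $E_{1/2}(w_\sigma,\G)=\frac{\sigma^2}{2}\|w'\|_2^2-\frac{\sigma^{p/2-1}}{2p}\|w\|_p^p\to-\infty$ as $\sigma\to\infty$, so $w_\sigma\in B$ for large $\sigma$, while $w_\sigma\to 0$ in the $\dot H^1$-seminorm as $\sigma\to0$, so $w_\sigma\in A_\delta$ for small $\sigma$; the map $\sigma\mapsto w_\sigma$ is continuous into $H_\mu^1(\G)$, giving an admissible path after reparametrization. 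One must take a little care that such a dilation is well-defined on the graph — this is exactly where one uses that $\G$ has a half-line, or (for periodic graphs) invariance under the periodic translations combined with a cut-off, as in the cited references \cite{BCJS_Non,CJS}; I would simply cite that construction. With $B$ and $\Gamma$ nonempty, the three estimates above complete the proof.
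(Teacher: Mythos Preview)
Your argument is correct and is essentially the standard mountain-pass verification that the paper invokes by citing \cite[Lemma 3.1]{BCJS_Non}; the Gagliardo--Nirenberg estimates, the uniform lower bound $c_\rho(\G)\ge m_0$ via the level set $\|u'\|_2^2=s_\ast$, and the inequality $E_\rho\le E_{1/2}$ on $B$ are exactly the mechanism used there. The one point where the paper adds something beyond that citation is precisely the part you flag as the ``main obstacle'': for periodic graphs the dilation $w_\sigma$ with $\sigma\to0$ is not literally available (there is no half-line to spread along, and your references \cite{BCJS_Non,CJS} do not treat periodic graphs), so the paper instead observes that $\lambda_\G=0$ (Lemma~\ref{Properties_L}(b)) directly guarantees $A_\delta\neq\emptyset$ for every $\delta>0$, after which nonemptiness of $B$ (by concentration on a single bounded edge, $\sigma\to\infty$) and path-connectedness of $H_\mu^1(\G)$ give $\Gamma\neq\emptyset$ without any global scaling.
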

\begin{proof}
	The proof works exactly as that of \cite[Lemma 3.1]{BCJS_Non}, additionally recalling here that, for every $\mu,k>0$, the set
	\[
	A_k:=\left\{u\in H_\mu^1(\G)\,:\,\|u'\|_2^2\leq k\right\}
	\]
	is nonempty whenever $\G\in\mathbf{G}$ is a noncompact graph with finitely many edges or a periodic graph because for all such graphs we already pointed out in \eqref{eq:lambdaG} that $\lambda_\G=0$.
\end{proof}

\begin{remark}
	\label{rem:pathR}
	On the real line it is well-known that 
	\[
	c_\rho(\R)=E_\rho(\phi_{\mu,\rho},\R)\qquad\forall \mu>0,\, \forall\rho\in\left[1/2,1\right], 
	\]
	where $\phi_{\mu,\rho}$ is as in Section \ref{sec:prel}. Furthermore, setting $a_p:=\left(\frac{p-2}{4}\right)^{\frac2{p-6}}$, exploiting \eqref{eq:relphi1} it is easy to see that there exists a (small) $\varepsilon_p>0$ such that the path $\overline{\gamma}:=(\overline{\gamma}_t)_{t\in[0,1]}\subset H_\mu^1(\R)$ given by
	\[
	\overline{\gamma}_t(x):=\sqrt{a_p t+\varepsilon_p}\,\phi_{\mu,\rho}\left((a_p t+\varepsilon_p)x\right)
	\]
	satisfies $\overline{\gamma}_0\in A_\delta$, $\overline{\gamma}_1\in B$ and $\displaystyle c_\rho(\R)=\max_{t\in[0,1]}E_\rho(\overline{\gamma}_t,\R)=E_\rho\left(\overline{\gamma}_{\frac{1-\varepsilon_p}{a_p}},\R\right)$. Observe also that, by \eqref{eq:relphi1}, \eqref{eq:Erhophi}, \eqref{eq:Emu}, 
	\[
	\|\overline{\gamma}_t'\|_2^2=\overline{C}\mu^{2\beta+1},\qquad\|\overline{\gamma}_t\|_p^p=\overline{C}'\mu^{2\beta+1}\,
	\]
	with $\overline{C},\overline{C}'>0$ depending on $p,\rho,t$ only, and being bounded both from above and away from zero, uniformly for $\rho\in\left[1/2,1\right]$ and $t\in[0,1]$.  Clearly, since for every $\mu$ and $\rho$ there results
	\[
	c_\rho(\R^+)= 2^{2 \beta}c_\rho(\R),
	\]
	an analogous path can be constructed at mass $\mu$ on the half-line simply taking the restriction to $\R^+$ of the path $\overline{\gamma}\subset H_{2\mu}^1(\R)$.
\end{remark}

The main result of this section is the following proposition, that establishes suitable a priori estimates on the level $c_\rho(\G)$.

\begin{proposition}
	\label{prop:level}
	For every $\G\in\mathbf{G}$ and every $p>6$, when $\mu \to 0$
	\[
	c_\rho(\G)\leq c_\rho(\R)+o(c_\rho(\R)).
	\]
	Moreover, if $\G$ has a pendant, 
	\[
	c_\rho(\G)\le c_\rho(\R^+)+o\left(c_\rho(\R^+)\right), 
	\]
	whereas if $\G$ has a signpost, then 
	\[
	c_\rho(\G)< c_\rho(\R).
	\]
	All relations are uniform for $\rho\in\left[1/2,1\right]$.
\end{proposition}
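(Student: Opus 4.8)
\emph{Strategy.} I would prove every inequality by exhibiting an explicit competitor path in $\Gamma$, obtained by \emph{transplanting} onto a suitable portion of $\G$ the one-dimensional optimal path $\overline\gamma$ of Remark \ref{rem:pathR} (or its half-line version), and then using $c_\rho(\G)\le\max_{t\in[0,1]}E_\rho(\gamma_t,\G)$. The mechanism that makes this work is the concentration of the profiles: by \eqref{eq:asphi}, as $\mu\to0$ the width of $\phi_{\mu,\rho}$, hence of every $\overline\gamma_t$, is of order $(\rho^\alpha\mu^\beta)^{-1}\to0$, uniformly for $\rho\in[1/2,1]$ up to a bounded factor (since $\alpha<0$, so $\rho^\alpha$ is bounded on $[1/2,1]$), while by Remark \ref{rem:pathR} the quantities $\|\overline\gamma_t'\|_2^2$ and $\|\overline\gamma_t\|_p^p$ are comparable to $c_\rho(\R)\sim\mu^{2\beta+1}$, uniformly in $(\rho,t)\in[1/2,1]\times[0,1]$.

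\emph{Generic and pendant bounds.} Fix any edge $e$ of $\G$ and a smooth cut-off $\chi$ equal to $1$ on the central part of $e$ and vanishing near the finite endpoints of $e$; for each $t$ set $\gamma_t:=\chi\cdot\overline\gamma_t(\,\cdot-c_e)$ on $e$ (with $c_e$ the mid-point of $e$, or a far-away point if $e$ is a half-line) and $\gamma_t:=0$ on $\G\setminus e$, then rescale so that $\|\gamma_t\|_2^2=\mu$. For $\mu$ small each $\overline\gamma_t$ fits, up to an exponentially small tail, inside the central part of $e$, so truncation and renormalization alter $\|\gamma_t'\|_2^2$ and $\|\gamma_t\|_p^p$ by a factor $1+o(1)$, hence $E_\rho$ by $o(c_\rho(\R))$; moreover $\gamma_0\in A_\delta$ and $\gamma_1\in B$ for $\mu$ small, after shrinking $\varepsilon_p$ in Remark \ref{rem:pathR} so that $\overline\gamma_0$ lies well inside $A_\delta$ and using that $\overline\gamma_1\in B$ with a strict inequality stable under such perturbations. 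This gives $c_\rho(\G)\le c_\rho(\R)+o(c_\rho(\R))$, uniformly in $\rho$; the $o(c_\rho(\R))$ rather than $o(1)$ comes from the mass renormalization, which perturbs the energy by an exponentially small \emph{multiple} of $c_\rho(\R)$. If $\G$ has a pendant $e=vw$ with $\deg(w)=1$, the same construction with the half-line path of Remark \ref{rem:pathR} (the restriction to $\R^+$ of $\overline\gamma\subset H_{2\mu}^1(\R)$) transplanted onto $e$, the Neumann endpoint placed at $w$ and the cut-off taken only near $v$, gives $c_\rho(\G)\le c_\rho(\R^+)+o(c_\rho(\R^+))$, again uniformly in $\rho$.

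\emph{Signpost bound.} When $\G$ contains a signpost, i.e. a bounded edge $e=vw$ and a loop of length $\ell$ at $v$, the real line has to be beaten using the loop, and this is the delicate point. The plan is to build a path supported on the loop and $e$, vanishing near $w$, whose functions are essentially concentrated bumps living on the loop, symmetric under the reflection fixing $v$ and its antipode, with only an exponentially small part spilling onto $e$ so as to restore the Kirchhoff condition at $v$: for such a profile the tails of the bump wrap around the loop and, the nonlinearity being focusing, produce a \emph{strict} decrease of the energy with respect to a soliton of equal mass on $\R$. Concretely one compares $E_\rho$ of such a competitor with $c_\rho(\R)=E_\rho(\phi_{\mu,\rho},\R)$: the gain obtained by periodizing the truncated soliton on the loop and then rescaling it back to mass $\mu$ (which lowers the energy, since $E_\rho(\phi_{\cdot,\rho},\R)$ is decreasing in the mass) has to be shown to outweigh the exponentially small cost of the matching at the degree-three vertex $v$ and of the final cut-off near $w$. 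One must also route the $A_\delta$-endpoint of the path through functions still supported near the signpost; this is possible because, $\mu$ being small, admissible values of $\delta$ are of order $c_\rho(\R)\gg\mu$, so a nearly constant function on the loop dipping to $0$ around $v$ belongs to $A_\delta$. The outcome is $c_\rho(\G)<c_\rho(\R)$, uniformly in $\rho\in[1/2,1]$, for $\mu$ small.

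\emph{Main obstacle.} Transplantation, cut-offs, mass renormalization and uniformity in $\rho$ are all routine; the real difficulty is the signpost estimate, where the gain provided by the loop and the loss incurred at $v$ and near $w$ are a priori of the same (exponentially small) order, so that a careful quantitative comparison — and not a soft argument — is needed to obtain the strict sign.
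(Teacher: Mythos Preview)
Your treatment of the generic and pendant bounds is essentially the paper's: transplant the optimal path $\overline\gamma$ onto an edge (or a pendant), cut off linearly, renormalize the mass, and use the asymptotics \eqref{eq:asphi} together with Remark~\ref{rem:pathR} to see that all errors are $o(c_\rho(\R))$ uniformly in $\rho$ and $t$. No issues there.

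The signpost estimate is where your proposal diverges from the paper, and where it has a genuine gap. You propose to periodize the soliton on the loop and argue that the resulting gain beats the cut-off loss near $w$; you yourself note that both contributions are a priori of the same exponential order and that ``a careful quantitative comparison --- and not a soft argument --- is needed'', but you do not provide one. The parenthetical justification (``which lowers the energy, since $E_\rho(\phi_{\cdot,\rho},\R)$ is decreasing in the mass'') does not apply: the periodized-and-rescaled function is not a soliton, so the monotonicity of $\mu\mapsto E_\rho(\phi_{\mu,\rho},\R)$ says nothing about its energy. As stated, the argument is incomplete.

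The paper avoids this difficulty by a different construction. On the tadpole $\mathcal T_\ell$ (loop of length $2\ell$ glued to a half-line) it sets
\[
\widetilde\gamma_t(x):=\begin{cases}\overline\gamma_t(x)&x\in[-\ell,\ell]\;(\text{the loop}),\\[2pt]\overline\gamma_t\!\left(\dfrac{x+2\ell}{2}\right)&x\in\R^+\;(\text{the half-line}).\end{cases}
\]
Stretching the tail by a factor $2$ on the half-line preserves \emph{exactly} both $\|\widetilde\gamma_t\|_2^2=\mu$ and $\|\widetilde\gamma_t\|_p^p=\|\overline\gamma_t\|_p^p$, while
\[
\|\widetilde\gamma_t'\|_{L^2(\mathcal T_\ell)}^2=\|\overline\gamma_t'\|_2^2-\tfrac34\,\|\overline\gamma_t'\|_{L^2(\R\setminus[-\ell,\ell])}^2,
\]
so $E_\rho(\widetilde\gamma_t,\mathcal T_\ell)<E_\rho(\overline\gamma_t,\R)$ for \emph{every} $t$, with a gain of order $\mu^{2\alpha+\beta}e^{-2\tau c\mu^\beta\ell}$. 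When one then passes from the tadpole to the signpost in $\G$ by cutting off on the bounded edge of length $2a$, the cut-off takes place, in the unstretched coordinate, at $y=(a+2\ell)/2>\ell$, so all cut-off errors carry the strictly faster-decaying exponential $e^{-c'\mu^\beta(a+2\ell)/2}$ and are negligible with respect to the gain. This is what gives the strict inequality $c_\rho(\G)<c_\rho(\R)$ without any delicate balancing, and uniformly in $\rho$. Your periodization scheme does not produce this separation of scales; if you want to rescue it you would have to carry out the quantitative comparison you allude to, whereas the stretching trick makes it unnecessary.
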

\begin{proof}
	We argue in three steps.
	
	\smallskip
	{\em Step 1: proof of $c_\rho(\G)\leq c_\rho(\R)+o(c_\rho(\R))$.} Let $\G\in\mathbf{G}$ and $p>6$ be fixed. Let $e\in\E_\G$ be any given edge of $\G$, and let $4\ell:=|e|$ be its length. To prove the first part of Proposition \ref{prop:level}, we will now show that, as soon as $\mu \to 0$, there exists a path $\gamma=(\gamma_t)_{t\in[0,1]}\subset H_\mu^1(\G)$ such that $\gamma_t$ is compactly supported on $e$ for every $t\in[0,1]$ and $\displaystyle\max_{t\in[0,1]}E_\rho(\gamma_t,\G)\leq c_\rho(\R)+o(c_\rho(\R))$ for every $\rho\in\left[1/2,1\right]$. 
	
	To this end, let $\overline{\gamma}\subset H_\mu^1(\R)$ be the path given in Remark \ref{rem:pathR} and define $v_t:e\to\R$ as
	\[
	v_t(x):=\begin{cases}
	\overline{\gamma}_t(x) & \text{if }x\in\left[-\ell,\ell\right]\\
	\overline{\gamma}_t(\ell)(2-x/\ell) & \text{if }x\in\left(\ell,2\ell\right]\\
	\overline{\gamma}_t(-\ell)(2+x/\ell) & \text{if }x\in\left[-2\ell,-\ell\right),
	\end{cases}
	\]
	where we tacitly identified the edge $e$ with the interval $[-2\ell,2\ell]$. Extending then $v_t$ to be identically zero on $\G\setminus e$, we define $\gamma_t\in H_\mu^1(\G)$ as
	\[
	\gamma_t:=\frac{\sqrt{\mu}}{\|v_t\|_2}\,v_t\,.
	\]
	Note that, by definition of $v_t$, Remark \ref{rem:pathR} and \eqref{eq:asphi}, 
	\begin{equation}
	\label{eq:vtL21}
	\begin{split}
	\|v_t\|_2^2=\|\overline{\gamma}_t\|_{L^2(-\ell,\ell)}^2+\frac{2\ell}3\overline{\gamma}_t(\ell)^2&\,=\mu+\frac{2\ell}3\overline{\gamma}_t(\ell)^2-2\|\overline{\gamma}_t\|_{L^2(\ell,+\infty)}^2\\
	&\,=\mu+C\mu^{2\alpha}e^{-2\tau c\mu^\beta\ell}+o\left(\mu^{2\alpha}e^{-2\tau c\mu^\beta\ell}\right)\qquad\text{as }\mu\to0\,,
	\end{split}
	\end{equation}
	for constants $C,c>0$ depending on $p,\rho,t$ only and both bounded from above and away from zero uniformly for $\rho\in\left[1/2,1\right]$ and $t\in[0,1]$, and $\tau:=\alpha/\beta$ (with $\alpha,\beta$ as in \eqref{eq:alfabeta}). Analogous computations lead to
	\[
	\begin{split}
	\|v_t'\|_2^2&\,=\|\overline{\gamma}_t'\|_2^2+C'\mu^{2\alpha+\beta}e^{-2\tau c\mu^\beta\ell}+o\left(\mu^{2\alpha+\beta}e^{-2\tau c\mu^\beta\ell}\right)=\|\overline{\gamma}_t'\|_2^2+o\left(\|\overline{\gamma}_t'\|_2^2\right)\\
	\|v_t\|_p^p&\,=\|\overline{\gamma}_t\|_p^p+C''\mu^{\alpha p}e^{-p\tau c\mu^\beta\ell}+o\left(\mu^{\alpha p}e^{-p\tau c\mu^\beta\ell}\right)=\|\overline{\gamma}_t\|_p^p++o\left(\|\overline{\gamma}_t\|_p^p\right)\qquad\text{as }\mu\to0\,,
	\end{split}
	\]
	again with $C',C''>0$ depending on $p,\rho,t$ only and both bounded from above and away from zero uniformly for $\rho\in\left[1/2,1\right]$ and $t\in[0,1]$ by Remark \ref{rem:pathR}. All in all, as soon as $\mu$ is small enough we obtain $\gamma_0\in A_\delta$, $\gamma_1\in B$ and
	\[
	E_\rho(\gamma_t,\G)=\frac{\mu}{\|v_t\|_2^2}\frac12\|v_t'\|_2^2-\left(\frac\mu{\|v_t\|_2^2}\right)^\frac p2 \frac1p\|v_t\|_p^p= E_\rho(\overline{\gamma}_t,\R)+o(E_\rho(\overline{\gamma}_t,\R))
	\]
	uniformly for $\rho\in\left[1/2,1\right]$ and $t\in[0,1]$, in turn yielding
	\[
	c_\rho(\G)\leq\max_{t\in[0,1]}E_\rho(\gamma_t,\G)=\max_{t\in[0,1]}\left(E_\rho(\overline{\gamma}_t,\R)+o\left(E_\rho(\overline{\gamma}_t,\R)\right)\right)=c_\rho(\R)+o(c_\rho(\R))
	\]
	for sufficiently small $\mu$.
	
	\smallskip
	{\em Step 2: proof of $c_\rho(\G) \leq c_\rho(\R^+)+o(c_\rho(\R^+))$ for graphs with a pendant.} This step works exactly as Step 1, with the only difference that we now construct a path $\gamma\subset H_\mu^1(\G)$ such that $\gamma_t$ is compactly supported on the pendant of $\G$ for every $t\in [0,1]$ and $\displaystyle\max_{t\in[0,1]}E_\rho(\gamma_t)\leq c_\rho(\R^+)+o(c_\rho(\R^+))<c_\rho(\R)$. By Remark \ref{rem:pathR}, such a path $\gamma$ can be obtained repeating verbatim the construction of Step 1 starting with the restriction to $\R^+$ of $\overline{\gamma}\subset H_{2\mu}^1(\R)$.
	
	\smallskip	
	{\em Step 3: proof of $c_\rho(\G)<c_\rho(\R)$ for graphs with a signpost.} The argument is again analogous to that of Step 1, suitably adapted to functions defined on the tadpole graph, i.e. the graph $\mathcal{T}_\ell$ given by a loop $\mathcal{L}$ of length $2\ell>0$ and a single half-line $\HH$. Note first that, for every $\mu$, $\rho$, it holds
	\begin{equation}
	\label{eq:levT}
	c_\rho(\mathcal{T}_\ell)<c_\rho(\R)\,.
	\end{equation}
	To see this, take again the path $\overline{\gamma}\subset H_\mu^1(\R)$ introduced in Remark \ref{rem:pathR} and define $\widetilde{\gamma}\subset H_\mu^1(\mathcal{T}_\ell)$ as
	\[
	\widetilde{\gamma}_t(x):=\begin{cases}
	\overline{\gamma}_t(x) & \text{if }x\in\left[-\ell,\ell\right]\\
	\overline{\gamma}_t\left(\frac{x+2\ell}2\right) & \text{if }x\in\R^+
	\end{cases}
	\]
	where, with a slight abuse of notation, we identified the loop $\mathcal{L}$ with the interval $[-\ell,\ell]$ and the half-line $\HH$ with $\R^+$. Straightforward computations show that, for every $t\in[0,1]$,
	\begin{equation}
	\label{eq:gammatilde}
	\begin{split}
	&\|\widetilde{\gamma}_t\|_2^2=\mu\,,\qquad\qquad\|\widetilde{\gamma}_t\|_{L^p(\mathcal{T}_\ell)}^p=\|\overline{\gamma}_t\|_{L^p(\R)}^p\,,\\
	&\|\widetilde{\gamma}_t'\|_{L^2(\mathcal{T}_\ell)}^2=\|\overline{\gamma}_t'\|_{L^2(-\ell,\ell)}^2+\frac14\|\overline{\gamma}_t'\|_{L^2(\R\setminus[-\ell,\ell])}^2=\|\overline{\gamma}_t'\|_2^2-\frac34\|\overline{\gamma}_t'\|_{L^2(\R\setminus[-\ell,\ell])}^2\,,
	\end{split}
	\end{equation}
	so that $\widetilde{\gamma}_0\in A_\delta$, $\widetilde{\gamma}_1\in B$ and $E_\rho(\widetilde{\gamma}_t,\mathcal{T}_\ell)<E_\rho(\overline{\gamma}_t,\R)$, in turn yielding \eqref{eq:levT}.
	
	To show then that the same inequality holds on $\G$ when it has a signpost, it is enough to adapt the construction in Step 1 to the path $\widetilde{\gamma}$. Denote by $\mathcal{P}$ the signpost of $\G$ and let $2\ell$ be the length of its loop and $2a$ be that of its bounded edge. For every $t\in[0,1]$, we define $v_t:\mathcal{P}\to\R$ as
	\[
	v_t(x):=\begin{cases}
	\widetilde{\gamma}_t(x) & \text{if }x\in\mathcal{L}\\
	\widetilde{\gamma}_t(x) & \text{if }x\in \left[0,a\right]\cap\HH\\
	\widetilde{\gamma}_t(a)\left(2-\frac xa\right) & \text{if }x\in[a,2a]\,,
	\end{cases}
	\]
	again with the slight abuse of notation to identify the loop of $\mathcal{P}$ with the loop $\mathcal{L}$ of the tadpole $\mathcal{T}_\ell$ and its bounded edge with the interval $[0,2a]$ on the half-line $\HH$ of $\mathcal{T}_\ell$. Extending $v_t$ to zero on the rest of $\G$, we then define $\gamma\subset H_\mu^1(\G)$ as $\displaystyle\gamma_t:=\frac{\sqrt{\mu}}{\|v_t\|_2}\,v_t$, for every $t\in[0,1]$. Arguing as in Step 1, by \eqref{eq:asphi} there results
	\begin{equation}
	\label{eq:vtL22}
	\begin{split}
	\|v_t\|_2^2=&\,\|\widetilde{\gamma}_t\|_{L^2(\mathcal{L}\cup(\HH\cap[0,a]))}^2	+\frac{\overline{\gamma}_t\left(\frac{a+2\ell}2\right)^2a}{3}=\mu-\|\widetilde{\gamma}_t\|_{L^2(\HH\cap(a,+\infty))}^2+\frac{\overline{\gamma}_t\left(\frac{a+2\ell}2\right)^2a}{3}\\
	=&\,\mu-\int_{a}^{+\infty}\left|\overline{\gamma}_t\left(\frac{x+2\ell}{2}\right)\right|^2\,dx+\frac{\overline{\gamma}_t\left(\frac{a+2\ell}2\right)^2a}{3}=\mu-2\int_{\frac{a+2\ell}{2}}^\infty\left|\overline{\gamma}_t(y)\right|^2\,dy+\frac{\overline{\gamma}_t\left(\frac{a+2\ell}2\right)^2a}{3}\\
	=&\,\mu+K\mu^{2\alpha}e^{-2\tau c\mu^\beta\left(\frac{a+2\ell}2\right)}+o\left(\mu^{2\alpha}e^{-2\tau c\mu^\beta\left(\frac{a+2\ell}2\right)}\right)\qquad\text{as }\mu\to0\,,
	\end{split}
	\end{equation}
	where $c,\tau$ are the same as in \eqref{eq:vtL21} and $K>0$ is a constant depending on $p,\rho,t$ only, bounded from above and away from zero uniformly for $\rho\in\left[1/2,1\right]$ and $t\in[0,1]$. Similarly, recalling also \eqref{eq:gammatilde}, we have
	\begin{equation}
	\label{eq:vt'}
	\begin{split}
	\|v_t'\|_2^2=&\,\|\widetilde{\gamma}_t'\|_{L^2\left(\mathcal{L}\cup(\HH\cap[0,a])\right)}^2+\frac{\overline{\gamma}_t\left(\frac{a+2\ell}{2}\right)^2}{a}=\|\widetilde{\gamma}_t\|_{L^2(\mathcal{T}_\ell)}^2-\|\widetilde{\gamma}_t'\|_{L^2(\HH\cap(a,+\infty))}^2+\frac{\overline{\gamma}_t\left(\frac{a+2\ell}{2}\right)^2}{a}\\
	=&\,\|\overline{\gamma}_t'\|_2^2-\frac34\|\overline{\gamma}_t'\|_{L^2(\R\setminus[-\ell,\ell])}^2-\|\widetilde{\gamma}_t'\|_{L^2(\HH\cap(a,+\infty))}^2+\frac{\overline{\gamma}_t\left(\frac{a+2\ell}{2}\right)^2}{a}\\
	=&\,\|\overline{\gamma}_t'\|_{2}^2-K'\mu^{2\alpha+\beta}e^{-2\tau c\mu^\beta\ell}+o\left(\mu^{2\alpha+\beta}e^{-2\tau c\mu^\beta\ell}\right)
	\end{split}
	\end{equation}
	and
	\begin{equation}
	\label{eq:vtLp}
	\begin{split}
	\|v_t\|_p^p=&\,\|\widetilde{\gamma}_t\|_{L^p(\mathcal{L}\cup(\HH\cap[0,a]))}^p+\frac{\left|\overline{\gamma}_t\left(\frac{a+2\ell}{2}\right)\right|^pa}{p+1}= \|\widetilde{\gamma}_t\|_{L^p(\mathcal{T}_\ell)}^p-\|\widetilde{\gamma}_t\|_{L^p(\HH\cap(a,+\infty))}^p+\frac{\left|\overline{\gamma}_t\left(\frac{a+2\ell}{2}\right)\right|^pa}{p+1}\\
	=&\,\|\overline{\gamma}_t\|_{L^p(\R)}^p+K''\mu^{\alpha p}e^{-p\tau c\mu^\beta\left(\frac{a+2\ell}{2}\right)}+o\left(e^{-p\tau c\mu^\beta\left(\frac{a+2\ell}{2}\right)}\right)\,,
	\end{split}
	\end{equation}
	for constants $K',K''>0$ depending on $p,\rho,t$ only and bounded from above and away from zero uniformly for $\rho\in\left[1/2,1\right]$ and $t\in[0,1]$.
	Coupling \eqref{eq:vt'} and \eqref{eq:vtLp} yields, since $p>2$ and $a>0$,
	\[
	E_\rho(v_t,\G)=E_\rho(\overline{\gamma}_t,\R)-K'\mu^{2\alpha+\beta}e^{-2\tau c\mu^\beta\ell}+o\left(\mu^{2\alpha+\beta}e^{-2\tau c\mu^\beta\ell}\right)
	\]
	as $\mu\to0$, which together with \eqref{eq:vtL22} (and recalling again Remark \ref{rem:pathR} and \eqref{eq:Emu}) gives
	\[
	\begin{split}
	E_\rho(\gamma_t,\G)&\,=\frac{\mu}{\|v_t\|_2^2}\frac12\|v_t'\|_2^2-\left(\frac\mu{\|v_t\|_2^2}\right)^\frac p2 \frac1p\|v_t\|_p^p=\left(1-O\left(\mu^{2\alpha}e^{-2\tau c\mu^\beta\left(\frac{a+2\ell}2\right)}\right)\right)E_\rho(v_t,\G)\\
	&\,= E_\rho(\overline{\gamma}_t,\R)-K'\mu^{2\alpha+\beta}e^{-2\tau c\mu^\beta\ell}E_\rho(\overline{\gamma}_t,\R)+o\left(\mu^{2\alpha+\beta}e^{-2\tau c\mu^\beta\ell}E_\rho(\overline{\gamma}_t,\R)\right)
	\end{split}
	\]
	as $\mu \to 0$. Since $K'$ is bounded away from zero uniformly for $\rho\in\left[1/2,1\right]$ and $t\in[0,1]$, by Remark \ref{rem:pathR} the previous identity entails $\displaystyle c_\rho(\G)\leq \max_{t\in[0,1]}E_\rho(\gamma_t,\G)<\max_{t\in[0,1]}E_\rho(\overline{\gamma}_t,\R)=c_\rho(\R)$ as soon as $\mu$ is small enough, and we conclude.
\end{proof}
We end this section with a more precise characterization of the mountain pass level $c_\rho(\G)$ under further topological conditions on the graph. To this end, we recall that a noncompact graph $\G\in\mathbf{G}$ with finitely many edges is said to satisfy assumption (H) if every $x\in\G$ lies on a trail (i.e. a path of adjacent edges of $\G$, each one run through exactly once) containing two half-lines. Assumption (H) has been introduced for the first time in \cite{AST}, but the formulation we reported here is taken from \cite{AST_Parma}.
\begin{proposition}
	\label{prop:H}
	Let $\G\in\mathbf{G}$ be a noncompact graphs with finitely many edges. Then 
	\begin{equation}
	\label{eq:cG=cR}
	c_\rho(\G)\leq c_\rho(\R)\qquad\forall\mu>0,\,\forall\rho\in\left[1/2,1\right].
	\end{equation}
	Furthermore, if $\G$ satisfies assumption (H), then equality holds in \eqref{eq:cG=cR}.
\end{proposition}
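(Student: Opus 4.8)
\emph{Strategy.} I would prove the two assertions in turn: $c_\rho(\G)\le c_\rho(\R)$ for every noncompact $\G$ with finitely many edges, and then $c_\rho(\G)\ge c_\rho(\R)$ under (H). For the first inequality the idea is that of Step~1 of Proposition~\ref{prop:level}, but since $\mu$ is now arbitrary and cannot be used to kill the truncation errors I exploit instead that $\G$, being noncompact with finitely many edges, contains a half-line $\HH$, along which the errors can be pushed to infinity. On $\R$ one has, for every $\varepsilon>0$, an admissible path for $c_\rho(\R)$ — obtained from the path of Remark~\ref{rem:pathR} by letting its dilation parameter run down towards $0$ (so that the starting point has $\|\cdot'\|_2$ as small as we please, in particular it lies in $A_\delta$) and then truncating — which is compactly supported and whose maximum of $E_\rho(\cdot,\R)$ exceeds $c_\rho(\R)$ by at most $\varepsilon$. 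Identifying $\HH$ with $[0,+\infty)$ and translating this path far along $\HH$, one embeds it isometrically into $H_\mu^1(\G)$ by extension by zero: all $L^2$, $L^p$ and $H^1$ norms are preserved, hence so are $E_\rho$ and membership in $A_\delta$ and $B$. Thus $c_\rho(\G)\le c_\rho(\R)+\varepsilon$ for every $\varepsilon>0$, uniformly in $\rho\in[1/2,1]$, which gives $c_\rho(\G)\le c_\rho(\R)$.

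\emph{Lower bound under (H): crossing a Pohozaev set.} For the reverse inequality I would show that every admissible path attains energy at least $c_\rho(\R)$ somewhere, by intercepting it with
\[
\mathcal{P}_\rho:=\Big\{v\in H_\mu^1(\G)\ :\ \|v'\|_2^2=\tfrac{p-2}{2p}\rho\,\|v\|_p^p\Big\}.
\]
Using \eqref{eq:GNp}, if $\delta$ is small (which we may assume, shrinking it if needed) then every $u\in A_\delta$ satisfies $\|u'\|_2^2>\tfrac{p-2}{2p}\rho\,\|u\|_p^p$, while every $u\in B$ satisfies $\|u'\|_2^2<\tfrac1p\|u\|_p^p\le\tfrac{p-2}{2p}\rho\,\|u\|_p^p$, where the last step uses $\rho\ge\tfrac12>\tfrac{2}{p-2}$ (valid since $p>6$). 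Hence the two endpoints of any $\gamma\in\Gamma$ lie on opposite sides of $\mathcal{P}_\rho$, so by continuity $\gamma$ meets $\mathcal{P}_\rho$, and therefore $c_\rho(\G)\ge\inf_{\mathcal{P}_\rho}E_\rho(\cdot,\G)$. A direct computation shows $E_\rho(v,\G)=\tfrac{p-6}{4p}\rho\,\|v\|_p^p$ for $v\in\mathcal{P}_\rho$, so everything reduces to a lower bound for $\|v\|_p^p$ on $\mathcal{P}_\rho$.

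\emph{Lower bound under (H): sharp Gagliardo--Nirenberg, and conclusion.} This is where (H) enters: by the rearrangement techniques of \cite{AST} (see also \cite{AST_JFA}), on a graph satisfying (H) the inequality \eqref{eq:GNp} holds with the \emph{same} constant $K_{p,\R}$ as on $\R$. Substituting $\|v'\|_2^2=\tfrac{p-2}{2p}\rho\,\|v\|_p^p$ and $\|v\|_2^2=\mu$ into $\|v\|_p^p\le K_{p,\R}\|v\|_2^{(p+2)/2}\|v'\|_2^{(p-2)/2}$ and solving for $\|v\|_p^p$ (the relevant exponent $(6-p)/4$ being negative, so the inequality reverses) gives $\|v\|_p^p\ge\|\phi_{\mu,\rho}\|_p^p$, since equality holds all along for $\phi_{\mu,\rho}$, which is a Gagliardo--Nirenberg extremizer on $\R$ and satisfies $\|\phi_{\mu,\rho}'\|_2^2=\tfrac{p-2}{2p}\rho\,\|\phi_{\mu,\rho}\|_p^p$ by \eqref{eq:relphi1}. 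Together with \eqref{eq:Erhophi} and Remark~\ref{rem:pathR} this yields
\[
c_\rho(\G)\ \ge\ \inf_{\mathcal{P}_\rho}E_\rho(\cdot,\G)\ \ge\ \tfrac{p-6}{4p}\rho\,\|\phi_{\mu,\rho}\|_p^p\ =\ E_\rho(\phi_{\mu,\rho},\R)\ =\ c_\rho(\R),
\]
uniformly in $\rho\in[1/2,1]$, and combined with the first part this gives equality in \eqref{eq:cG=cR}. I expect the crux to be precisely this last step: one needs in hand the rigidity that (H) pins the graph Gagliardo--Nirenberg constant to $K_{p,\R}$, and it is this — rather than any compactness or rearrangement argument performed directly on the paths, which would run into the discontinuity of symmetric rearrangement — that turns the crossing estimate into the exact identity $c_\rho(\G)=c_\rho(\R)$.
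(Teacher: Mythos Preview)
Your argument is correct, but the route you take for the lower bound under (H) differs genuinely from the paper's. The paper rearranges each admissible path directly: given $\gamma\in\Gamma$, it forms the symmetric decreasing rearrangement $\widehat{\gamma}_t\in H_\mu^1(\R)$, checks that $t\mapsto\widehat{\gamma}_t$ is continuous in $H^1(\R)$ (citing Coron \cite{Coron}), and concludes from \eqref{eq:riarr} that $\widehat{\gamma}$ is admissible for $c_\rho(\R)$ with $E_\rho(\widehat{\gamma}_t,\R)\le E_\rho(\gamma_t,\G)$. You instead intercept every path with the Pohozaev-type set $\mathcal{P}_\rho$ and then use that (H) forces the sharp Gagliardo--Nirenberg constant on $\G$ to equal $K_{p,\R}$, which gives the pointwise bound $E_\rho(v,\G)\ge E_\rho(\phi_{\mu,\rho},\R)$ on $\mathcal{P}_\rho$. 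Both arguments ultimately rest on the same rearrangement inequalities under (H); the paper applies them path-by-path, you apply them once to the functional inequality. Your approach buys independence from any continuity result for the rearrangement map, while the paper's is shorter and avoids the crossing computation.

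One correction to your commentary: you write that rearranging paths ``would run into the discontinuity of symmetric rearrangement'', but in fact the symmetric decreasing rearrangement \emph{is} continuous as a map $W^{1,p}(\R)\to W^{1,p}(\R)$ --- this is precisely Coron's result, and it is what the paper invokes. So the direct approach is not obstructed in the way you anticipate. A minor technical point on your side: the claim that ``$\delta$ may be shrunk if needed'' tacitly uses that $c_\rho(\G)$ is independent of $\delta$ for $\delta$ small enough (so that restricting the starting set $A_\delta$ does not change the level); this is standard and follows from the mountain pass geometry of Lemma~\ref{lem:MPgeom}, but is worth a word.
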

\begin{proof}
	The validity of \eqref{eq:cG=cR} is evident as soon as $\G$ has at least one half-line. To show that $c_\rho(\G)=c_\rho(\R)$ whenever $\G$ satisfies assumption (H) recall that, by standard properties of rearrangements on graphs (see e.g. \cite[Section 3]{AST}), if $u\in H^1(\G)$ is a positive function on $\G$ satisfying assumption (H), then its symmetric decreasing rearrangement on the line $\widehat{u}\in H^1(\R)$ is such that
	\begin{equation}
	\label{eq:riarr}
	\|u\|_r=\|\widehat{u}\|_r,\quad\forall r\geq1,\qquad\|u'\|_2\geq\|\widehat{u}'\|_2\,,
	\end{equation}
	By \eqref{eq:riarr} it follows that, if $\gamma\in C\left([0,1],H_\mu^1(\G)\right)$ is a path in $H_\mu^1(\G)$ such that $\gamma_0\in A_\alpha$ and $\gamma_1\in B$ on $\G$, then the symmetric decreasing rearrangement $\widehat{\gamma}_t$ of $\gamma_t$ provides a path $\widehat{\gamma}\in C\left([0,1], H_\mu^1(\R)\right)$  (the strong continuity in $H^1(\R)$ of the map $t\mapsto\widehat{\gamma}_t$ can be proved e.g. as in \cite{Coron}) such that $\widehat{\gamma}_0\in A_\alpha$ and $\widehat{\gamma}_1\in B$ on $\R$, and $E_\rho(\widehat{\gamma}_t,\R)\leq E_\rho(\gamma_t,\G)$ for every $t$. As this ensures that $c_\rho(\G)\geq c_\rho(\R)$, coupling with \eqref{eq:cG=cR} yields the desired identity.
\end{proof}

\section{Graphs with finitely many edges: proof of Theorem \ref{thm:exHalf}}
\label{sec:exhalf}

In this section we provide the proof of the first main existence result of the paper, i.e. the existence of positive solutions of mountain pass type on noncompact graphs with finitely many edges stated as in Theorem \ref{thm:exHalf}. 

Before proving Theorem \ref{thm:exHalf}, we establish a useful alternative for bounded Palais-Smale sequences of $E_\rho$ with small mass.
\begin{lemma}
	\label{lem:dicHalf}
	Let $\G\in\mathbf{G}$ be a noncompact graph with finitely many edges and $p>6$. There exists $\overline{\mu}=\overline{\mu}(p,\G)>0$ such that, for every $\mu\in\left(0,\overline{\mu}\right]$ and every $\displaystyle\rho\in\left[1/2,1\right]$, if $(u_n)_n\subset H_\mu^1(\G)$ satisfies $u_n\geq0$ on $\G$ and, as $n\to+\infty$,
	\begin{itemize}
		\item[(a)] $\displaystyle E_\rho(u_n,\G)\to c_\rho(\G)$,
		\item[(b)] $\displaystyle E_\rho'(u_n,\G)+\lambda_n u_n\to0$ in the dual of $H^1(\G)$, with $\displaystyle \lambda_n:=-\frac{E_\rho'(u_n,\G)[u_n]}{\mu}=\frac{\rho\|u_n\|_p^p-\|u_n'\|_2^2}{\mu}$ for every $n$, and
		\item[(c)] $\displaystyle u_n\rightharpoonup u$ in $H^1(\G)$,
	\end{itemize}
	then either $u\equiv 0$ on $\G$ or $u$ is a critical point of $E_\rho$ constrained to $H_\mu^1(\G)$ at level $c_\rho(\G)$.
\end{lemma}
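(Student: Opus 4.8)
The plan is to establish the dichotomy by proving that if the weak limit $u$ is nontrivial, then no mass escapes and $u_n\to u$ strongly in $H^1(\G)$ (so that $u\in H_\mu^1(\G)$ is a critical point of $E_\rho$ at level $c_\rho(\G)$), while if $u\equiv0$ we are in the first alternative and there is nothing to prove. Throughout, set $\overline\mu:=\tilde\mu(\G,p)$ with $\tilde\mu$ from Corollary~\ref{positivity_ok}.

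\emph{First reductions.} By (c) the sequence $(u_n)_n$ is bounded in $H^1(\G)$, so the multipliers $\lambda_n=(\rho\|u_n\|_p^p-\|u_n'\|_2^2)/\mu$ are bounded and, along a subsequence, $\lambda_n\to\lambda$. Since $\G$ has finitely many edges, $H^1\hookrightarrow L^p$ compactly on bounded subgraphs; passing to the limit in (b) tested against compactly supported functions, and then by density, one gets that $u$ solves $-u''+\lambda u=\rho|u|^{p-2}u$ on $\G$ with Kirchhoff conditions. If $u\not\equiv0$, testing with a $\varphi$ for which $\int_\G u\varphi\neq0$ determines $\lambda$ uniquely, hence $\lambda_n\to\lambda$ for the whole sequence. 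Assume from now on $u\not\equiv0$. Since $u=\lim u_n\geq0$ satisfies that equation, the strong maximum principle together with uniqueness for the Cauchy problem of $u''=\lambda u-\rho u^{p-1}$ and the Kirchhoff condition (at a vertex where $u$ vanishes every outgoing slope is a nonnegative minimum slope, hence zero) force $u>0$ on all of $\G$. Using the scaling $v\in S_{\sigma,\rho}\iff\rho^{1/(p-2)}v\in S_{\rho^{2/(p-2)}\sigma}$, which preserves the multiplier, Lemma~\ref{Properties_L}(c) yields $\lambda>0$, and Corollary~\ref{positivity_ok} (its $\rho$-version), applied with the mass $m:=\|u\|_2^2\in(0,\mu]\subset(0,\overline\mu]$, yields $E_\rho(u,\G)>0$.

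\emph{Ruling out loss of mass.} Suppose, for contradiction, $m<\mu$, and put $w_n:=u_n-u\rightharpoonup0$, $\bar\mu:=\mu-m>0$, so $\|w_n\|_2^2\to\bar\mu$. By the Brezis--Lieb lemma (using $u_n\to u$ a.e. and the $H^1$-bound), $\|u_n'\|_2^2=\|u'\|_2^2+\|w_n'\|_2^2+o(1)$ and $\|u_n\|_p^p=\|u\|_p^p+\|w_n\|_p^p+o(1)$, hence $E_\rho(w_n,\G)\to c_\rho(\G)-E_\rho(u,\G)$; similarly $E_\rho'(w_n,\G)+\lambda w_n\to0$ in $H^{-1}(\G)$. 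Testing the latter against $w_n$ gives $\|w_n'\|_2^2-\rho\|w_n\|_p^p+\lambda\bar\mu=o(1)$, so $\rho\|w_n\|_p^p\geq\lambda\bar\mu/2>0$ for $n$ large: $w_n$ does not vanish in $L^p$. As $w_n\to0$ in $L^p$ on the (compact) union of the bounded edges of $\G$ and $\G$ has finitely many half-lines, there is a half-line $\HH\cong\R^+$ along which, by Lions' concentration lemma, one finds $t_n\to+\infty$ with $\int_{t_n-1}^{t_n+1}|w_n|^2\geq\varepsilon_0>0$. Unfolding $\HH$ into $\R$, the translates $\widetilde w_n:=w_n(\cdot+t_n)$ converge weakly in $H^1(\R)$ to some $\overline w\not\equiv0$ solving $-\overline w''+\lambda\overline w=\rho|\overline w|^{p-2}\overline w$ on $\R$; far out on $\HH$, $w_n=u_n-u$ with $u_n\geq0$ and $u$ decaying, so $\overline w\geq0$, hence $\overline w>0$, and being bounded (by the $L^\infty$ Gagliardo--Nirenberg estimate) and of finite mass it must be a soliton, $\overline w=\phi_{\nu,\rho}$ with $\nu:=\|\overline w\|_2^2>0$; weak lower semicontinuity gives $\nu\leq\bar\mu<\mu$.

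\emph{Conclusion.} Iterating the previous step in the standard way — the remainder $w_n-\phi_{\nu,\rho}(\cdot-t_n)$ is again a bounded Palais--Smale sequence for $-v''+\lambda v=\rho|v|^{p-2}v$, weakly null, whose escaping bumps are again \emph{positive} (hence further translates of $\phi_{\nu,\rho}$), the process stopping after finitely many steps because the total mass is bounded and the leftover converging strongly to $0$ in $H^1$ — we obtain $\liminf_nE_\rho(w_n,\G)\geq E_\rho(\phi_{\nu,\rho},\R)$. By \eqref{eq:Emu} and Remark~\ref{rem:pathR}, $E_\rho(\phi_{\nu,\rho},\R)=\theta_p\rho^{4/(6-p)}\nu^{2\beta+1}$ is strictly decreasing in $\nu$ (since $2\beta+1<0$ for $p>6$), so $E_\rho(\phi_{\nu,\rho},\R)\geq\theta_p\rho^{4/(6-p)}\mu^{2\beta+1}=c_\rho(\R)$. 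Hence $c_\rho(\G)=E_\rho(u,\G)+\lim_nE_\rho(w_n,\G)\geq E_\rho(u,\G)+c_\rho(\R)>c_\rho(\R)$, contradicting \eqref{eq:cG=cR} of Proposition~\ref{prop:H}. Therefore $m=\mu$; then $u_n\to u$ in $L^2(\G)$, hence in $L^p(\G)$ by \eqref{eq:GNp}, and testing (b) against $u_n-u$ forces $\|u_n'\|_2\to\|u'\|_2$, so $u_n\to u$ strongly in $H^1(\G)$ and $u$ is a critical point of $E_\rho$ constrained to $H_\mu^1(\G)$ at level $c_\rho(\G)$. The main obstacle is precisely the loss-of-mass step: one must identify the escaping profile as a soliton and, above all, bound from below the energy it carries; the argument closes only because the mass constraint renders the soliton energy on $\R$ decreasing in the mass, so any genuine splitting overshoots the comparison level $c_\rho(\R)\geq c_\rho(\G)$, and it is here that the smallness of $\mu$ enters, through the positivity $E_\rho(u,\G)>0$ granted by Corollary~\ref{positivity_ok} (i.e. Proposition~\ref{asympt}).
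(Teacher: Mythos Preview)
Your argument is correct and follows the same architecture as the paper's: assume $m:=\|u\|_2^2\in(0,\mu)$, split the energy via Brezis--Lieb, show that $u_n-u$ escapes along the half-lines and decomposes into finitely many solitons, and reach a contradiction by comparing with the level on $\R$. The only real difference is in the endgame. The paper keeps the full decomposition $E_\rho(v_n,\G)=kE_\rho(\phi_{(\mu-m)/k,\rho},\R)+o(1)$, combines it with the quantitative lower bound $E_\rho(u,\G)\ge\tfrac12E_\rho(\phi_{2m,\rho},\R^+)$ from Remark~\ref{rem:estErho}, optimizes over $m\in(0,\mu)$, and contradicts the asymptotic upper bound $c_\rho(\G)\le c_\rho(\R)+o(c_\rho(\R))$ of Proposition~\ref{prop:level}. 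You instead retain just one escaping bump, observe that its mass $\nu\le\mu-m<\mu$ already pushes its energy strictly above $c_\rho(\R)$ by the monotonicity in \eqref{eq:Emu}, add the bare positivity $E_\rho(u,\G)>0$ from Corollary~\ref{positivity_ok}, and contradict the \emph{exact} inequality $c_\rho(\G)\le c_\rho(\R)$ of Proposition~\ref{prop:H}. Your route is shorter and sidesteps the minimization over $m$; the paper's route stays within the asymptotic machinery of Section~\ref{sec:apriori} and does not appeal to Proposition~\ref{prop:H} (which is proved there but plays no role in its existence argument).
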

\begin{proof}
	Since $(u_n)_n$ is bounded in $H^1(\G)$ by (c), so is $(\lambda_n)_n \subset \R$. Hence, up to subsequences, $\lambda_n\to\lambda$ as $n\to+\infty$, for some $\lambda\in\R$.
	
	By weak lower semicontinuity, $m:=\|u\|_2^2\in[0,\mu]$. If $m=0$, then $u\equiv0$. Conversely, if $m=\mu$, then $u\in H_\mu^1(\G)$, $u_n\to u$ in $L^2(\G)$ and thus in $L^p(\G)$, so that by (b) and $\lambda_n\to\lambda$ it follows
	\[
	\|u_n'-u'\|_2^2+\lambda\|u_n-u\|_2^2=o(1)\qquad\text{as }n\to+\infty\,,
	\]
	that is $u_n\to u$ in $H^1(\G)$, and $u$ is a critical point of $E_\rho$ constrained to $H_\mu^1(\G)$ at level $c_\rho(\G)$. Hence, to prove the lemma it is enough to rule out the case $m\in(0,\mu)$.
	
	To this end, we argue by contradiction. Assume thus $0<m<\mu$, so that $u\not\equiv0$ on $\G$ satisfies
	\begin{equation}
	\label{eq:NLSu}
	\begin{cases}
	u''+\rho u^{p-1}=\lambda u & \text{on every }e\in\E_\G\\
	u\geq 0 &\text{on }\G\\
	\sum_{e\succ \vv}\frac{du}{dx_e}(\vv)=0 & \text{for every }\vv\in\V_\G\,.
	\end{cases}
	\end{equation} 
	By standard arguments, $u>0$ on $\G$, that is $u\in S_{m,\rho}$ (where $S_{m,\rho}$ is the set defined in Remark \ref{rem:estErho}). Thus we have from Lemma \ref{Properties_L}(c)  that $\lambda >0$.  By Remark \ref{rem:estErho}, see also Corollary \ref{positivity_ok},
	there exist $\overline{\mu}>0$  (depending on $p$ and $\G$ only), such that for every $\mu\in\left(0,\overline{\mu}\right)$ and every $\displaystyle\rho\in\left[1/2,1\right]$, there results
	\begin{equation}
	\label{eq:Eu>0}
	E_\rho(u,\G) \ge \frac{1}{2} E_\rho(\phi_{2m, \rho}, \R^+)\,.
	\end{equation}
	Set then $v_n:=u_n-u$. By weak convergence of $u_n$ to $u$ in $H^1(\G)$, as $n \to \infty$ we have
	\begin{equation}
	\label{eq:vnvn'L2}
	\|v_n\|_2^2=\mu-m+o(1),\qquad\|v_n'\|_2^2=\|u_n'\|_2^2-\|u'\|_2^2+o(1)\,,
	\end{equation}
	so that by Brezis-Lieb lemma \cite{BL} and (a)
	\begin{equation}
	\label{eq:Evn}
	E_\rho(v_n,\G)=E_\rho(u_n,\G)-E_\rho(u,\G)+o(1)=c_\rho(\G)-E_\rho(u,\G)+o(1)\qquad\text{as }n\to+\infty\,.
	\end{equation}
	Moreover, by (b), (c), \eqref{eq:NLSu} and $\lambda_n\to\lambda$, it is easy to see that $v_n\to0$ in $H^1(\K)$, where $\K$ denotes as usual the compact core of $\G$. This can be easily seen for instance by taking the difference between (b) and \eqref{eq:NLSu} both tested with $\varphi_n:=(u_n-u)\eta$, where $\eta$ is compactly supported on a fixed neighbourhood of $\K$ and $\eta\equiv1$ on $\K$. Hence, up to negligible corrections in $H^1(\G)$ that do not affect the validity of \eqref{eq:vnvn'L2} and \eqref{eq:Evn}, we can take $v_n$ to be identically zero on $\K$ for every $n$. Since $\displaystyle\G\setminus\K=\bigcup_{i=1}^N\HH_i$, where $\HH_i,\,i=1,\dots,N$, are all the half-lines of $\G$, this means that we can think of $v_n$ as the sum of its restrictions $v_{n,i}$ to the half-lines $\HH_i$, $i=1,\dots,N$. As $v_{n,i}$ vanishes at the vertex of $\HH_i$, we can embed $\HH_i$ in $\R$ and extend each $v_{n,i}$ by $0$. Up to translations, there is no loss of generality in further assuming that each $v_{n,i}$ satisfies $\|v_{n,i}\|_\infty=v_{n,i}(0)$. Moreover, standard arguments (see e.g. \cite{CZR}) show that, for every $i=1,\dots, N$, as $n\to+\infty$,
	\begin{equation}
	\label{eq:E'v_nR}
	\int_\R v_{n,i}'\varphi'\,dx-\rho\int_\R v_{n,i}^{p-1}\varphi\,dx+\lambda\int_\R v_{n,i}\varphi\,dx=o(1)\|\varphi\|_{H^1(\R)}\qquad\forall \varphi\in H^1(\R)\,.
	\end{equation}
	Observe that, together with \eqref{eq:vnvn'L2} and $\lambda >0$, this entails that (up to subsequences) $\|v_{n,i}\|_\infty\not\to0$ as $n\to+\infty$ at least for one value of $i$. Indeed, otherwise $\|v_{n,i}\|_p^p \to0$ as $n\to+\infty$ contradicting \eqref{eq:E'v_nR}.
	
	All in all, \eqref{eq:vnvn'L2} and \eqref{eq:E'v_nR} imply that there exists $k\in\N$, $k\geq1$, such that
	\[
	E_\rho(v_n,\G)=\sum_{i=1}^NE_\rho(v_{n,i},\R)=k E_\rho\left(\phi_{\frac{\mu-m}{k},\rho},\R\right)+o(1)
	\]
	as soon as $n$ is large enough. Coupling with \eqref{eq:Eu>0} and \eqref{eq:Evn} 	
	then yields (recalling \eqref{eq:Emu})
	\begin{align*}
	c_\rho(\G) &\geq k E_\rho\left(\phi_{\frac{\mu-m}{k},\rho},\R\right)+ E_\rho(u,\G) +
	o(1) \geq E_\rho\left(\phi_{\mu-m,\rho},\R\right)+ \frac{1}{2} E_\rho(\phi_{2m,\rho},\R^+) +o(1)  \\
	&= \theta_p \rho^{\frac{4}{6-p}}\left((\mu-m)^{2\beta +1} +  2^{2\beta -1} m^{2\beta +1}\right) +o(1) \ge (1-\tau)^{2\beta+1}E_\rho(\phi_{\mu,\rho},\R)+ o(1)
	\end{align*}
	where $(1-\tau)^{2\beta+1}>1$ since $2\beta + 1 <0$, and $\tau:=\left(2 (\frac{1}{2})^{-2\beta}+1\right)^{-1}$ is such that $\tau\mu$ is the minimum point of the function $m\mapsto (\mu-m)^{2\beta +1} +  2^{2\beta -1} m^{2\beta +1}$ on $(0,\mu)$. Note that $\tau$ depends on $p$ only.\\
	However, by Proposition \ref{prop:level} (and recalling that $\tau<1$ and $2\beta+1<0$), up to a possible reduction of  the threshold $\overline{\mu}>0$, there results $c_\rho(\G)\leq (1-\tau/2)^{2\beta+1}c_\rho(\R)=(1-\tau/2)^{2\beta+1} E_\rho(\phi_{\mu,\rho},\R)$ for every $\mu\in(0,\overline{\mu}]$ and every $\displaystyle\rho\in\left[1/2,1\right]$, contradicting the preceding inequality and  completing the proof.
\end{proof} 

\begin{proof}[Proof of Theorem \ref{thm:exHalf}]
	The proof is divided in two steps: in the first one, we prove existence of positive critical points of $E_\rho$ constrained to $H_\mu^1(\G)$ for almost every $\displaystyle\rho\in\left[1/2,1\right]$, in the second one we prove that (a subsequence of) these critical points converges, as $\rho\to1^-$, to a positive critical point of $E$ in $H_\mu^1(\G)$.
	
	\smallskip
	{\em Step 1.} For every fixed $\mu>0$, by Lemma \ref{lem:MPgeom} and Theorem \ref{thm:PS} it follows that, for almost every $\displaystyle\rho\in\left[1/2,1\right]$, there exists a bounded sequence $(u_n)_n\subset H_\mu^1(\G)$ satisfying properties (i)--(iv) of Theorem \ref{thm:PS}. Furthermore, as pointed out in \cite[Remark 1.4]{BCJS_TAMS}, it is not restrictive to assume $u_n\geq0$ on $\G$ for every $n$. Set
	\begin{equation}
	\label{eq:lambdan}
	\lambda_n:=-\frac{E_\rho'(u_n,\G)[u_n]}{\|u_n\|_2^2}=\frac{\rho\|u_n\|_p^p-\|u_n'\|_2^2}{\mu}\,.
	\end{equation}
	Since $(u_n)_n$ is bounded in $H^1(\G)$ by property (iii) of Theorem \ref{thm:PS}, $(\lambda_n)_n$ is bounded too, so that as $n\to+\infty$ up to subsequences $\lambda_n\to\lambda_\rho$, for some $\lambda_\rho\in\R$, and $u_n\rightharpoonup u_\rho$ in $H^1(\G)$, for some $u_\rho\in H^1(\G)$ satisfying
	\begin{equation}
	\label{eq:NLSurho}
	\begin{cases}
	u_\rho''+u_\rho^{p-1}=\lambda_\rho u_\rho & \text{on every }e\in\E_\G\\
	u_\rho\geq0 &\text{on }\G\\
	\sum_{e\succ \vv}\frac{du_\rho}{dx_e}(\vv)=0 & \text{for every }\vv\in\V_\G\,.
	\end{cases}
	\end{equation}
	Since $\G$ is a graph with at least one half-line and $u_n$ satisfies properties (iv) of Theorem \ref{thm:PS}, by Lemma \ref{lem:dim3} and \cite[Lemma 3.2]{BCJS_Non} we obtain $\lambda_\rho\geq0$.
	
	We now show that $u_\rho\not\equiv0$ on $\G$. Assume by contradiction that this is the case, so that $u_n\rightharpoonup 0$ in $H^1(\G)$ and $u_n\to0$ in $L_{loc}^\infty(\G)$ as $n\to+\infty$. Note that the convergence of $u_n$ to 0 cannot be in $L^\infty(\G)$, because then $u_n\in H_\mu^1(\G)$ would imply $u_n\to0$ in $L^p(\G)$, which together with \eqref{eq:lambdan} and $\liminf \lambda_n\geq0$ would yield $E_\rho(u_n,\G)\to0$, that is impossible since $E_\rho(u_n,\G)\to c_\rho(\G)>0$. Hence, $u_n\to0$ in $L_{loc}^\infty(\G)$ but not in $L^\infty(\G)$. Whenever the mass is smaller than a threshold independent of $\rho$, arguing as in the proof of Lemma \ref{lem:dicHalf} it is then easy to see that, denoting by $\HH_i$, $i=1,\dots,N$, the half-lines of $\G$, as $n\to+\infty$ there results
	\[
	\begin{split}
	&\|u_n\|_{L^2(\bigcup_{i=1}^N\HH_i)}^2=\mu+o(1)\\
	&\int_{\HH_i}u_n'\varphi'\,dx-\int_{\HH_i}u_n^{p-1}\varphi\,dx+\lambda\int_{\HH_i}u_n\varphi\,dx=o(1)\|\varphi\|_{H^1(\HH_i)},\qquad\forall i=1,\dots,N\,,
	\end{split}
	\]
	and, up to negligible corrections in $H^1(\G)$, $u_n$ can be taken to be identically zero on $\K$. Since $\|u_n\|_\infty$ is uniformly bounded away from 0, it then follows that there exists $k\in\N$, $k\geq1$, such that
	\begin{equation}
	\label{eq:ass}
	E_\rho(u_n,\G)=k E_\rho\left(\phi_{\frac\mu k,\rho},\R\right)+o(1)\geq E_\rho(\phi_{\mu,\rho},\R)+o(1)=c_\rho(\R)+o(1)\qquad\text{as }n\to+\infty\,, 
	\end{equation}
	the first inequality following by \eqref{eq:Emu}. However, since by assumption $\G$ contains either a pendant or a signpost, Proposition \ref{prop:level} ensures that $E_\rho(u_n,\G)=c_\rho(\G)+o(1)<c_\rho(\R)$ as $\mu \to 0$  (uniformly in $\rho$), contradicting \eqref{eq:ass}. Hence, $u_{\rho}\not\equiv0$ on $\G$.
	
	Up to possibly lowering the threshold on the mass, by Lemma \ref{lem:dicHalf} we then have that $u_\rho$ is a critical point of $E_\rho$ constrained to $H_\mu^1(\G)$ at level $c_\rho(\G)$. Moreover, since $u_\rho$ satisfies \eqref{eq:NLSurho} and $\G$ is a noncompact graph with finitely many edges, we have $\lambda_\rho>0$ and $u_\rho>0$ on $\G$. 
	
	\smallskip
	{\em Step 2.} Since the previous argument works for every $\mu\in(0,\mu_{p,\G}]$ and almost every $\displaystyle\rho\in\left[1/2,1\right]$, for some value $\mu_{p,\G}>0$ depending on $p$ and $\G$ only, by Step 1 there exists $u_\rho>0$ critical point of $E_\rho$ constrained to $H_\mu^1(\G)$ at level $c_\rho(\G)$, for every $\mu\in(0,\mu_{p,\G}]$ and for almost every $\displaystyle\rho\in\left[1/2,1\right]$. We then fix any $\mu$ in this interval and consider the limit $\rho\to1^-$ (along a subsequence of $\rho$'s, not relabebed). Recall first that $u_\rho$ satisfies \eqref{eq:NLSurho} with $\lambda_\rho>0$. Since $\|u_\rho\|_2^2=\mu$ for every $\rho$, arguing exactly as in the proof of Proposition \ref{asympt} proves that $(\lambda_\rho)_\rho$ is bounded as $\rho\to1^-$.  By \eqref{eq:NLSurho}, the boundedness of $\lambda_\rho$ and that of $E_\rho(u_\rho,\G)=c_\rho(\G)$, we obtain that $(u_\rho)_\rho$ is bounded in $H_\mu^1(\G)$ as $\rho\to1^-$. All in all, this implies that, as $\rho\to 1$, $(u_\rho)_\rho\subset H_\mu^1(\G)$ satisfies $E(u_\rho)\to c(\G)+o(1)$. Indeed, the continuity of the map $\rho\mapsto c_\rho(\G)$ as $\rho\to1^-$ follows repeating the argument of \cite[Lemma 2.3]{J}. Furthemore,
	\[
	E'(u_\rho,\G)+\lambda_{\rho} u_\rho\to0\qquad\text{in the dual of }H^1(\G)
	\] 
	where, up to subsequences, $\lambda_\rho\to\lambda\geq0$ as $\rho\to1^-$. Therefore, repeating verbatim the argument of Step 1 yields a positive critical point of $E$ in $H_\mu^1(\G)$ at level $c(\G)$, i.e a solution of \eqref{nlsG} as desired.
\end{proof}

\section{Periodic graphs: proof of Theorem \ref{thm:exper}}
\label{sec:exper}
In this section we prove our existence result for mountain pass solutions on periodic graphs. The line of the argument is essentially the same as the one exploited in the previous section for graphs with half-lines, but the periodicity of the graph will now be crucial to recover the necessary compactness. The first result in this direction is indeed the analogue of Lemma \ref{lem:dicHalf}.
\begin{lemma}
	\label{lem:dicPer}
	Let $\G\in\mathbf{G}$ be a periodic graph and $p>6$. There exists $\overline{\mu}=\overline{\mu}(p,\G)>0$ such that, for every $\mu\in\left(0,\overline{\mu}\right]$ and every $\displaystyle\rho\in\left[1/2,1\right]$, if $(u_n)_n\subset H_\mu^1(\G)$ satisfies $u_n\geq0$ on $\G$ and, as $n\to+\infty$,
	\begin{itemize}
		\item[(a)] $\displaystyle E_\rho(u_n,\G)\to c_\rho(\G)$,
		\item[(b)] $\displaystyle E_\rho'(u_n,\G)+\lambda_n u_n\to0$ in the dual of $H^1(\G)$, with $\displaystyle \lambda_n:=-\frac{E_\rho'(u_n, \G)[u_n]}{\mu}$ for every $n$, and
		\item[(c)] $\displaystyle u_n\rightharpoonup u$ in $H^1(\G)$,
	\end{itemize}
	then either $u\equiv 0$ on $\G$ or $u$ is a critical point of $E_\rho$ constrained to $H_\mu^1(\G)$ at level $c_\rho(\G)$.
\end{lemma}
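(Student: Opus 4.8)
The plan is to follow closely the scheme of the proof of Lemma~\ref{lem:dicHalf}, replacing the decomposition of the ``escaping mass'' along the finitely many half-lines (which is unavailable on a periodic graph) by a Lions-type concentration--compactness argument exploiting the invariance of $\G$ under its periodicity group. Up to a subsequence, $\lambda_n\to\lambda$ (the sequence $(\lambda_n)_n$ being bounded since $(u_n)_n$ is), and $m:=\|u\|_2^2\in[0,\mu]$. The cases $m=0$ (which gives $u\equiv0$) and $m=\mu$ (which, by the compactness of $H_\mu^1(\G)\hookrightarrow L^p(\G)$ on bounded subsets of $H^1$ together with (b), forces $u_n\to u$ in $H^1(\G)$ and hence makes $u$ a constrained critical point at level $c_\rho(\G)$) are handled verbatim as in Lemma~\ref{lem:dicHalf}. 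It thus remains to rule out $0<m<\mu$.

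Assume then $0<m<\mu$. By elliptic regularity and the strong maximum principle $u\in S_{m,\rho}$. Since $\G$ is periodic, Lemma~\ref{Properties_L}(c) is not available; instead, using the scaling $u\in S_{m,\rho}\iff\rho^{-\frac1{p-2}}u\in S_{\rho^{-\frac2{p-2}}m,1}$ (which preserves the multiplier $\lambda$) and taking $\overline\mu$ small enough, Lemma~\ref{Properties_L}(d) gives $\lambda\geq1$, in particular $\lambda>0$, uniformly in $\rho\in[1/2,1]$. Set $v_n:=u_n-u\rightharpoonup0$ in $H^1(\G)$. By the Brezis--Lieb lemma, $\|v_n\|_2^2=\mu-m+o(1)$ and $E_\rho(v_n,\G)=c_\rho(\G)-E_\rho(u,\G)+o(1)$, while subtracting the equation of $u$ from (b) yields $E_\rho'(v_n,\G)+\lambda v_n\to0$ in the dual of $H^1(\G)$.

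The core step is the concentration--compactness analysis of $(v_n)_n$. Because $\lambda>0$, $v_n$ cannot vanish in $L^p(\G)$: otherwise $\|v_n'\|_2^2+\lambda\|v_n\|_2^2=\rho\|v_n\|_p^p+o(1)=o(1)$ would contradict $\|v_n\|_2^2\to\mu-m>0$. Using that $\G$ is invariant under its periodicity lattice, one extracts lattice translations $\tau_n$ with $v_n(\cdot+\tau_n)\rightharpoonup w_1\not\equiv0$ in $H^1(\G)$ (the profile being nontrivial because the relevant $L^2$-mass survives on a fixed bounded subgraph, where $H^1\hookrightarrow L^2$ is compact); by periodicity $w_1$ solves the $\rho$-NLS on $\G$ with the same $\lambda>0$, so $w_1>0$ and $w_1\in S_{m_1,\rho}$ for some $m_1\in(0,\mu-m]$. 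Iterating on $v_n-w_1(\cdot-\tau_n)$, with a Brezis--Lieb splitting at each step, and noting that every bubble carries energy at least a fixed positive amount (by Remark~\ref{rem:estErho} and $m_j\le\mu$, recalling that $2\beta+1<0$ makes $m_j^{2\beta+1}\ge\mu^{2\beta+1}>0$), the process terminates after finitely many steps and produces $w_1,\dots,w_k$ with $\sum_{j=1}^k m_j=\mu-m$ and
\[
c_\rho(\G)=E_\rho(u,\G)+\sum_{j=1}^k E_\rho(w_j,\G)+o(1),\qquad w_j\in S_{m_j,\rho}.
\]

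To conclude I would insert the energy lower bounds of Remark~\ref{rem:estErho}: being positive solutions on $\G$ of small mass, $u$ and each $w_j$ satisfy $E_\rho(\cdot,\G)\geq(1-\varepsilon)\theta_p\rho^{\frac4{6-p}}\sigma\,(\mathrm{mass})^{2\beta+1}$, with $\sigma=1$ if $\G$ has no pendant and $\sigma=2^{2\beta}$ otherwise, and $\varepsilon=\varepsilon(\mu)\to0$ as $\mu\to0$. Since $2\beta+1<0$, the map $t\mapsto t^{2\beta+1}$ is super-additive and strictly convex, so $m^{2\beta+1}+\sum_j m_j^{2\beta+1}\geq m^{2\beta+1}+(\mu-m)^{2\beta+1}\geq 2^{-2\beta}\mu^{2\beta+1}$, whence $c_\rho(\G)\geq(1-\varepsilon)\sigma\,2^{-2\beta}c_\rho(\R)+o(1)$, uniformly in $\rho$. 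If $\G$ has no pendant this reads $c_\rho(\G)\geq(1-\varepsilon)2^{-2\beta}c_\rho(\R)+o(1)$ with the fixed factor $2^{-2\beta}>1$, which contradicts $c_\rho(\G)\leq c_\rho(\R)+o(c_\rho(\R))$ from Proposition~\ref{prop:level} once $\mu$ is small; if $\G$ has a pendant it reads $c_\rho(\G)\geq(1-\varepsilon)c_\rho(\R)+o(1)$, which contradicts $c_\rho(\G)\leq c_\rho(\R^+)+o(c_\rho(\R^+))=2^{2\beta}(1+o(1))c_\rho(\R)$ from Proposition~\ref{prop:level}, since $2^{2\beta}<1$ is fixed and $\varepsilon\to0$. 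In either case $0<m<\mu$ is impossible, which proves the lemma. The main obstacle is exactly this concentration--compactness step: the escaping mass is no longer carried by copies of $\R$, so one must run the argument intrinsically on the periodic graph via translations by the periodicity lattice, and verify that the limiting profiles are genuine $H^1(\G)$-solutions (so that Remark~\ref{rem:estErho} applies to them) and that the iteration terminates --- all of which hinge on $\lambda>0$, itself a consequence of the smallness of $\mu$ through Lemma~\ref{Properties_L}(d).
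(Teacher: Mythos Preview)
Your argument is correct and follows essentially the same strategy as the paper: a concentration--compactness decomposition via translations by the periodicity lattice, producing finitely many positive solutions $w_j\in S_{m_j,\rho}$ with $\sum_j m_j=\mu$ and $\sum_j E_\rho(w_j,\G)=c_\rho(\G)$, followed by a contradiction between Remark~\ref{rem:estErho} and Proposition~\ref{prop:level}. The only notable difference is the endgame: the paper simply observes that since $N\geq2$ one of the profiles has mass at most $\mu/2$, and compares $E_\rho(\phi_{\mu/2,\rho},\R)$ (or its $\R^+$ analogue) directly with $c_\rho(\G)\leq c_\rho(\R)(1+o(1))$, whereas you sum over all profiles and invoke the convexity inequality $m^{2\beta+1}+(\mu-m)^{2\beta+1}\geq 2^{-2\beta}\mu^{2\beta+1}$; both routes yield the same contradiction. (A tiny quibble: $t\mapsto t^{2\beta+1}$ with $2\beta+1<0$ is sub-additive, not super-additive --- but the inequality you actually use is correct and follows from strict convexity.) Your appeal to Lemma~\ref{Properties_L}(d) to obtain $\lambda>0$ is the right one for periodic graphs.
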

\begin{proof}
	Since the proof is very similar to that of Lemma \ref{lem:dicHalf}, we only highlight the main differences. 
	
	Let us consider first the case of a periodic graph $\G$ with no pendant. If we assume $m:=\|u\|_2^2\in(0,\mu)$, arguing exactly as above we obtain that $u\in S_{m,\rho}$ solves \eqref{eq:NLSu} for some  $\lambda \in \R$, where up to subsequences $\lambda_n\to\lambda$ as $n\to+\infty$. Note that by Lemma \ref{Properties_L}(b), $\lambda \geq 0$ and also, assuming $\mu >0$ sufficiently small, $\lambda >0$ by Lemma \ref{Properties_L}(c).  Moreover, exploiting the periodicity of $\G$, we can consider $u_n-u$ and suitably translate it on $\G$ to obtain a function $v_n$ such that, as $n\to+\infty$,
	\[
		\begin{split}
		&\|v_n\|_2^2=\mu-m+o(1)\\
		&E_\rho(v_n,\G)=c_\rho(\G)-E_\rho(u,\G)+o(1)\\
		&E_\rho'(v_n,\G)+\lambda v_n\to0\text{ in the dual of }H^1(\G)
		\end{split}
	\]
	and, for every $n$, $v_n$ attains its $L^\infty$ norm in a given compact subset of $\G$ independent of $n$. Since $(v_n)_n$ is bounded in $H^1(\G)$ by definition,  the argument used for $u_n$ implies that $v_n\rightharpoonup w$ in $H^1(\G)$, for some $w$ satisfying again \eqref{eq:NLSu} with the same $\lambda$ as $u$. Iterating this procedure, we eventually end up with finitely many functions $w_k$, $k=1,\dots, N$, each one solving \eqref{eq:NLSu} with the same $\lambda>0$ (here $w_1=u$, $w_2=w$ as above) and such that
	\[
	\mu=\|u_n\|_2^2=\sum_{k=1}^N\|w_k\|_2^2,\qquad c_\rho(\G)=E_\rho(u_n,\G)=\sum_{k=1}^NE_\rho(w_k,\G)+o(1)
	\]
	as $n \to +\infty$. Note however that $N\geq2$ since by assumption $\|w_1\|_2^2=m\in(0,\mu)$, so that in particular there exists $\overline{k}\in\left\{1,\dots,N\right\}$ for which $\displaystyle\|w_{\overline{k}}\|_2^2\leq\frac\mu2$. Remark \ref{rem:estErho} and Proposition \ref{prop:level} then ensure the existence of a value $\overline{\mu}>0$, depending only on $p$ and $\G$, such that  for every $\displaystyle\rho\in\left[1/2,1\right]$ and $\mu \in (0, \overline{\mu}]$, 
	\[
	E_\rho(\phi_{\mu,\rho},\R)+o\left(E_\rho(\phi_{\mu,\rho},\R)\right)\geq c_\rho(\G)>E_\rho(w_{\overline{k}},\G)\geq E_\rho\left(\phi_{\frac\mu2,\rho},\R\right)+o\left(E_\rho\left(\phi_{\frac\mu2,\rho},\R\right)\right)\,,
	\]
	i.e. a contradiction in view of \eqref{eq:Emu}.
	
	If $\G$ has a pendant, the proof is exactly the same, making use of the corresponding estimates in Remark \ref{rem:estErho} and Proposition \ref{prop:level}.
\end{proof} 
We are now almost ready to prove Theorem \ref{thm:exper}, for which we only need the following preliminary lemma, that is the counterpart of \cite[lemma 3.2]{BCJS_Non} in the context of periodic graphs.

\begin{lemma}
	\label{lem:Perdim3}
	Let $\G\in\mathbf{G}$ be a periodic graph. For every $\lambda<0$ there exists a subspace $Y\subset H^1(\G)$ such that $\text{\normalfont dim}\,Y=3$ and
	\begin{equation}
	\label{eq:ineqY}
	\|w'\|_2^2+\lambda\|w\|_2^2\leq\frac\lambda2\|w\|_{H^1(\G)}^2\qquad\forall w\in Y.
	\end{equation}
\end{lemma}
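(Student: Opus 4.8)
The plan is to recast \eqref{eq:ineqY} as a bound on the Rayleigh quotient and then to build $Y$ out of far‑apart translates of a single compactly supported near‑minimizer of that quotient. Since $\|w\|_{H^1(\G)}^2=\|w'\|_2^2+\|w\|_2^2$, a direct rearrangement shows that, for $w\neq 0$, inequality \eqref{eq:ineqY} is equivalent to
\[
\frac{\|w'\|_2^2}{\|w\|_2^2}\;\le\;\frac{-\lambda}{2-\lambda}\;=:\;\kappa\,,
\]
with $\kappa\in(0,1)$ because $\lambda<0$. Hence it suffices to exhibit a subspace $Y\subset H^1(\G)$ with $\dim Y=3$ on which $\|w'\|_2^2\le\kappa\|w\|_2^2$.

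First, since $\G$ is periodic, Lemma \ref{Properties_L}(b) gives $\lambda_\G=0$, so by the definition \eqref{eq:lambdaG} there is $\psi\in H^1(\G)\setminus\{0\}$ with $\|\psi'\|_2^2<\kappa\|\psi\|_2^2$. Next I would make $\psi$ compactly supported without losing this strict bound: taking a Lipschitz cutoff $\eta_R$ equal to $1$ on the metric ball $B_R$ about a fixed vertex, vanishing outside $B_{R+1}$, with $|\eta_R'|\le1$, the function $\eta_R\psi$ is compactly supported (a ball of finite radius in a graph belonging to $\mathbf{G}$ meets only finitely many edges, as degrees are finite and $e_0>0$) and $\eta_R\psi\to\psi$ in $H^1(\G)$ as $R\to+\infty$, since $\int_{\G\setminus B_R}(|\psi|^2+|\psi'|^2)\to0$ and $\int_{B_{R+1}\setminus B_R}|\psi|^2\to0$. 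Thus for $R$ large $\widehat\psi:=\eta_R\psi$ is a nonzero, compactly supported element of $H^1(\G)$ still satisfying $\|\widehat\psi'\|_2^2<\kappa\|\widehat\psi\|_2^2$; set $K:=\operatorname{supp}\widehat\psi$, a compact subset of $\G$.

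Now I would exploit the translation invariance. The periodic structure provides an isometry $T$ of $\G$ of infinite order such that only finitely many of the translates $T^mK$, $m\in\Z$, meet $K$ (a consequence of the periodic structure, e.g. since $d(x,T^mx)\to+\infty$ as $|m|\to\infty$ uniformly in $x$); fix $n\in\N$ large enough that $T^{\pm n}K$ and $T^{\pm2n}K$ are all disjoint from $K$. Putting $\psi_j:=\widehat\psi\circ T^{-jn}$ for $j=0,1,2$, these lie in $H^1(\G)$, are supported on the pairwise disjoint sets $T^{jn}K$, and satisfy $\|\psi_j'\|_2=\|\widehat\psi'\|_2$ and $\|\psi_j\|_2=\|\widehat\psi\|_2$ since $T$ is an isometry. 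Disjointness of supports forces $\psi_0,\psi_1,\psi_2$ to be linearly independent, so $Y:=\operatorname{span}\{\psi_0,\psi_1,\psi_2\}$ has dimension $3$; and for $w=\sum_{j=0}^2 c_j\psi_j$ the disjoint supports give $\|w'\|_2^2=\big(\sum_j c_j^2\big)\|\widehat\psi'\|_2^2$ and $\|w\|_2^2=\big(\sum_j c_j^2\big)\|\widehat\psi\|_2^2$, whence $\|w'\|_2^2\le\kappa\|w\|_2^2$. Undoing the initial rearrangement yields \eqref{eq:ineqY} for every $w\in Y$ (trivially for $w=0$).

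The routine algebra of the equivalence and the disjoint‑support identities is immediate; the only two points needing a word of care are that truncation preserves the strict Rayleigh bound — guaranteed by the $H^1$ convergence $\eta_R\psi\to\psi$ — and that a compact set meets only finitely many of its $T$‑translates, which is properness of the periodic action. I do not expect a genuine obstacle: the content of the lemma is just that the spectral bottom $\lambda_\G=0$ of a periodic graph is not isolated and is approached by compactly supported functions, whose translates furnish arbitrarily large families of (essentially) orthogonal near‑minimizers.
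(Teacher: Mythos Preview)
Your proposal is correct and follows essentially the same route as the paper: both proofs recast \eqref{eq:ineqY} as the Rayleigh bound $\|w'\|_2^2\le\frac{-\lambda}{2-\lambda}\|w\|_2^2$, use $\lambda_\G=0$ to find a single compactly supported near-minimizer, and then take $Y$ as the span of three disjointly supported translates obtained from the periodic action. You have merely filled in details that the paper leaves implicit (the cutoff to pass to compact support, the properness of the periodic action, and the verification of \eqref{eq:ineqY} on the full span rather than just the generators).
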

\begin{proof}
	It is a direct consequence of the fact that $\lambda_\G=0$ (with $\lambda_\G$ given by \eqref{eq:lambdaG}) for every periodic graph $\G$. Indeed, from this it follows that, for every $\lambda<0$, there exists $w_0\in H^1(\G)$ with compact support such that
	\[
	\|w_0'\|_2^2\leq \frac{-\lambda}{2-\lambda}\|w_0\|_2^2\,,
	\]
	i.e. $w_0$ satisfies \eqref{eq:ineqY}. Exploiting the periodicity of the graph, it is then straightforward to take $w_1,w_2\in H^1(\G)$ to be two different copies of $w_0$, suitably translated on $\G$ so that $w_0, w_1, w_2$  have disjoint supports. The proof is then completed simply taking $Y=\text{\normalfont span}\left\{w_0,w_1,w_2\right\}$.
\end{proof}
\begin{proof}[Proof of Theorem \ref{thm:exper}]
	The proof follows exactly the same line as that of Theorem \ref{thm:exHalf}, replacing whenever needed Lemma \ref{lem:dicHalf} with Lemma \ref{lem:dicPer}, and \cite[Lemma 3.2]{BCJS_Non} with Lemma \ref{lem:Perdim3}. The only significant difference arises when one shows that the weak limit $u_\rho$ of the Palais-Smale sequence $(u_n)_n\subset H_\mu^1(\G)$ for $E_\rho$ at level $c_\rho(\G)$ is not identically equal to 0 on $\G$. In the case of a periodic graph, it is even easier to perform this passage. Indeed, exploiting the periodicity of $\G$, there is no loss of generality in taking from the very beginning each $u_n$ to attain its $L^\infty$ norm in a given compact subset of $\G$ independent of $n$. Like this, if we assume by contradiction that $u_n\rightharpoonup0$ in $H^1(\G)$ as $n\to+\infty$, then $u_n\to0$ in $L^p(\G)$, in turn entailing (just as in the proof of Theorem \ref{thm:exHalf}) $\|u_n'\|_2^2\to0$ and $E_\rho(u_n,\G)\to0$ as $n\to+\infty$, that is impossible since $E_\rho(u_n,\G)\to c_\rho(\G)>0$ by assumption.
\end{proof}

\section{Proof of Theorems \ref{thm:neg}--\ref{thm:prop_p}}
\label{sec:thmneg}

This section is devoted to the proof of Theorems \ref{thm:neg}--\ref{thm:prop_p}, i.e. to show the existence of solutions of \eqref{nlsG} with negative energy on noncompact graphs $\G\in\mathbf{G}$ with finitely many edges and suitable topological assumptions when the mass is large enough.

We begin with the proof of Theorem \ref{thm:neg}. Since we will prove the theorem with a perturbative argument around $p=6$, let us briefly recall some previous results at this critical exponent. First, when $p=6$ it is well-known that the function $\phi_{\mu,1}$ as in Section \ref{sec:prel} solves the minimization problem
\[
\inf_{u\in H_\mu^1(\R)}E(u,\R)
\]
if and only if $\displaystyle\mu=\mu_\R:=\frac{\sqrt{3}\pi}2$. In this case, $E(\phi_{\mu_\R,1},\R)=0$.  Moreover, for every $\lambda>0$, the function $\varphi_\lambda(x):=\sqrt{\lambda}\phi_{\mu_\R,1}(\lambda x)$ satisfies $\varphi_\lambda\in H_{\mu_\R}^1(\R)$, $E(\varphi_\lambda,\R)=0$ and 
\begin{equation}
\label{eq:Ephibar}
\frac{\lambda}2\mu_\R=J_{\lambda,6}(\varphi_\lambda,\R)=\inf_{v\in\NN_{\lambda,6}(\R)}J_{\lambda,6}(v,\R)\,
\end{equation}
where 
\begin{equation}
\label{eq:JR}
J_{\lambda,p}(v,\R):=\frac12\|v'\|_2^2+\frac{\lambda}2\|v\|_2^2-\frac1p\|v\|_p^p
\end{equation}
and 
\begin{equation}
\label{eq:NR}
\NN_{\lambda,p}(\R):=\left\{v\in H^1(\R)\,:\,\|v'\|_2^2+\lambda\|v\|_2^2=\|v\|_p^p\right\}\,.
\end{equation}
Conversely, when $p=6$ and $\G$ has finitely many edges, no pendant and exactly one half-line, it has been shown in \cite[Theorem 3.3]{AST_CMP} that the minimization problem $\displaystyle \inf_{u\in H_\mu^1(\G)}E(u,\G)$ has a solution if and only if $\displaystyle\mu\in\left(\mu_\R/2,\mu_\R\right]$. Furthermore, for every $\displaystyle\mu\in\left(\mu_\R/2,\mu_\R\right]$, if $u\in H_\mu^1(\G)$ is such that $\displaystyle E(u,\G)= \inf_{u\in H_\mu^1(\G)}E(u,\G)$, then $E(u,\G)<0$. 

The following preliminary lemma moves from these considerations to establish existence of solutions to the problem
\[
\inf_{v\in\NN_{\lambda,p}(\G)}J_{\lambda,p}(v,\G)
\]
for $p$ slightly above $6$ (where $J_{\lambda,p}(v,\G)$, $\NN_{\lambda,p}(\G)$ are defined analogously to \eqref{eq:JR}, \eqref{eq:NR}).
\begin{lemma}
	\label{lem:J}
	Let $\G\in\mathbf{G}$ be a noncompact graph with finitely many edges, no pendant and exactly one half-line. Then, there exist $\overline{\lambda}>0$ and $\delta>0$ (depending on $\G$) such that $\displaystyle\inf_{v\in\NN_{\overline{\lambda},p}(\G)}J_{\overline{\lambda},p}(v,\G)$ is attained, for every $p\in[6,6+\delta)$.
\end{lemma}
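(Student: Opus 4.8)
The plan is to show that at $p=6$ the minimization of $J_{\lambda,6}(\cdot,\G)$ on the Nehari manifold $\NN_{\lambda,6}(\G)$ is attained for a suitable $\overline\lambda>0$, and then to transfer this existence to $p$ slightly above $6$ by a continuity/stability argument. First I would recall the scaling correspondence at the critical exponent: since for $p=6$ the functional $E(\cdot,\G)$ on a mass constraint and the unconstrained functional $J_{\lambda,6}(\cdot,\G)$ on the Nehari manifold are essentially two faces of the same problem, the result of \cite[Theorem 3.3]{AST_CMP} (existence of a negative-energy minimizer of $E(\cdot,\G)$ on $H^1_\mu(\G)$ for $\mu\in(\mu_\R/2,\mu_\R]$, and strict negativity of $E$ there) should translate into: there exists $\overline\lambda>0$ such that $\inf_{v\in\NN_{\overline\lambda,6}(\G)}J_{\overline\lambda,6}(v,\G)$ is attained, and moreover that this infimum is \emph{strictly below} the corresponding level on the half-line $\R^+$ (equivalently, strictly below $\frac{\overline\lambda}{2}\,\mu_\R$, the level \eqref{eq:Ephibar} attained by the soliton family). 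Fixing such an $\overline\lambda$ once and for all, I would then establish the three ingredients that make the minimum persist for $p\in[6,6+\delta)$:

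\emph{(i) A priori bounds and a Nehari lower bound.} For $v\in\NN_{\overline\lambda,p}(\G)$ one has $J_{\overline\lambda,p}(v,\G)=\left(\frac12-\frac1p\right)(\|v'\|_2^2+\overline\lambda\|v\|_2^2)\geq 0$, so the infimum $m_p:=\inf_{\NN_{\overline\lambda,p}(\G)}J_{\overline\lambda,p}$ is finite and nonnegative; minimizing sequences are bounded in $H^1(\G)$ because the Nehari constraint together with the Gagliardo--Nirenberg inequality \eqref{eq:GNp} forces $\|v'\|_2^2+\overline\lambda\|v\|_2^2$ to stay bounded away from $0$ and from $+\infty$ along any minimizing sequence (the lower bound away from $0$ is the standard fact that $\NN_{\overline\lambda,p}(\G)$ is bounded away from the origin).

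\emph{(ii) Continuity of the critical level in $p$.} Using the explicit solitons $\phi_{\mu,1}$ as competitors and the fact that $L^p$ norms of $H^1$ functions depend continuously on $p$ on compact subsets, one shows $\limsup_{p\to 6^+} m_p\le m_6$, and conversely $\liminf_{p\to 6^+} m_p\ge m_6$ by a standard lower-semicontinuity argument along minimizers; hence $m_p\to m_6$ as $p\to 6^+$. Combined with the analogous (elementary, because explicit) continuity of the half-line level $2^{2\beta}$-type quantity, the \emph{strict} inequality $m_6<(\text{level on }\R^+)$ is preserved for $p$ in a right-neighbourhood of $6$, after shrinking $\delta$ if necessary.

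\emph{(iii) Compactness of minimizing sequences via the level gap.} Take a minimizing sequence $(v_n)$ for $m_p$; by (i) it is bounded, so up to subsequence $v_n\rightharpoonup v$ in $H^1(\G)$, and by the Kirchhoff equation it converges strongly on the compact core $\K$. On the single half-line $\HH$ of $\G$ one analyses $v_n-v$ as in the proof of Proposition \ref{asympt} / Lemma \ref{lem:dicHalf}: either $v\equiv 0$, in which case the whole mass escapes along $\HH$ and $m_p$ equals the half-line level, contradicting the strict gap from (ii); or $v\not\equiv 0$, in which case the residual $v_n-v$ carries a level $\geq$ the half-line level, and if it were nonzero we would again get $m_p\geq m_p'+(\text{half-line level})>m_p$, a contradiction. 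Hence $v_n\to v$ strongly, $v\in\NN_{\overline\lambda,p}(\G)$ and $J_{\overline\lambda,p}(v,\G)=m_p$, i.e. the infimum is attained.

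The main obstacle will be step (iii): ruling out the loss of mass along the half-line of $\G$. On the real line or the half-line, dilation invariance at $p=6$ makes the soliton a genuine minimizer, so the "escaping bubble" has exactly the energy of the half-line problem, and one needs the strict inequality $m_6<(\text{level on }\R^+)$ — which is precisely the content of \cite[Theorem 3.3]{AST_CMP} via its strict negativity statement — to survive the perturbation in $p$. Making the perturbative transfer of this strict gap fully rigorous (controlling the $o(1)$ terms uniformly as $p\to 6^+$, exactly as in Remark \ref{rem:estErho}) is the only genuinely delicate point; the rest is a routine adaptation of the concentration-compactness arguments already used throughout the paper.
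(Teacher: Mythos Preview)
Your approach is correct and close in spirit to the paper's, but you take a longer and slightly rougher route. The paper's proof is much shorter because it outsources your step (iii) entirely to \cite{DDGST}, which already establishes that for noncompact graphs with finitely many edges the action infimum on $\NN_{\lambda,p}(\G)$ is attained whenever it lies \emph{strictly below} the corresponding level on $\R$. Hence the whole task reduces to proving the gap $\inf_{\NN_{\overline\lambda,p}(\G)}J_{\overline\lambda,p}<\inf_{\NN_{\overline\lambda,p}(\R)}J_{\overline\lambda,p}$ at $p=6$ and then invoking continuity of $p\mapsto m_p$. For the $p=6$ step the paper also uses \cite[Theorem 1.3]{DST_MA} to convert the negative-energy ground state of \cite{AST_CMP} into an action ground state at the explicit $\overline\lambda=(\|\overline u\|_6^6-\|\overline u'\|_2^2)/\mu_\R$, making precise what you loosely call the ``scaling correspondence''. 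Two small corrections to your sketch: the comparison level for a bubble escaping along the single half-line is the level on $\R$, not on $\R^+$ (the formula $\frac{\overline\lambda}{2}\mu_\R$ you cite from \eqref{eq:Ephibar} is indeed the $\R$ level, so the content is right but the name is off); and in (ii) you cannot argue ``along minimizers'' for $p>6$, since their existence is precisely what you are proving---use near-minimizers and Nehari projection instead. Your hands-on concentration-compactness argument would go through, but quoting \cite{DDGST} is the cleaner way.
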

\begin{proof}
	Recall first that, by \cite{DDGST}, if $\G\in\mathbf{G}$ is a noncompact graph with finitely many edges, for every $p>2$ and $\lambda>0$ the minimization problem $\displaystyle \inf_{v\in\NN_{\lambda,p}(\G)}J_{\lambda,p}(v,\G)$ admits a solution if
	\begin{equation}
	\label{eq:JG<JR}
	\inf_{v\in\NN_{\lambda,p}(\G)}J_{\lambda,p}(v,\G)<\inf_{v\in\NN_{\lambda,p}(\R)}J_{\lambda,p}(v,\R)\,.
	\end{equation}
	Hence, to prove the lemma it is enough to show that \eqref{eq:JG<JR} holds at a certain value of $\lambda$ for every $p$ sufficiently close to $6$, whenever $\G$ has finitely many edges, exactly one half-line and no pendant. Moreover, since at fixed $\lambda>0$ the map $\displaystyle p\mapsto\inf_{v\in\NN_{\lambda,p}(\G)}J_{\lambda,p}(v,\G)$ is easily proved to be continuous, we are left to prove the validity of  \eqref{eq:JG<JR} at $p=6$.
	
	Let then $p=6$. In this case, by the aforementioned result of \cite{AST_CMP}, there exists $\overline{u}\in H_{\mu_\R}^1(\G)$ such that
	\begin{equation}
	\label{eq:Eubar}
	E(\overline{u},\G)=\inf_{u\in H_{\mu_\R}^1(\G)}E(u,\G)<0\,.
	\end{equation}
	By \cite[Theorem 1.3]{DST_MA}, it then follows that
	\[
	J_{\overline{\lambda},6}(\overline{u},\G)=\inf_{v\in\NN_{\overline{\lambda},6}(\G)}J_{\overline{\lambda},6}(v,\G), \quad \text{with} \quad \overline{\lambda}:=\frac{\|\overline{u}\|_6^6-\|\overline{u}'\|_2^2}{\mu_\R}\,.
	\] 
	 By \eqref{eq:Ephibar} and \eqref{eq:Eubar}, we then have
	\[
	\inf_{v\in\NN_{\overline{\lambda},6}(\G)}J_{\overline{\lambda},6}(v,\G)=E(\overline{u},\G)+\frac{\overline{\lambda}}{2}\|\overline{u}\|_2^2<\frac{\overline{\lambda}}{2}\mu_\R=\inf_{v\in\NN_{\overline{\lambda},6}(\R)}J_{\overline{\lambda},6}(v,\R)\,,
	\]
	and we conclude.
\end{proof}

\begin{proof}[Proof of Theorem \ref{thm:neg}]
	Since it is a standard fact that solutions of the problem $\displaystyle\inf_{v\in\NN_\lambda(\G)}J_{\lambda,p}(v,\G)$ satisfy \eqref{nlsG}, in view of Lemma \ref{lem:J} to complete the proof of Theorem \ref{thm:neg} it is enough to show that, as soon as $p$ is sufficiently close to 6, functions $u\in\NN_{\overline{\lambda},p}(\G)$ for which $\displaystyle J_{\overline{\lambda},p}(u,\G)=\inf_{v\in\NN_{\overline{\lambda},p}(\G)}J_{\overline{\lambda},p}(v,\G)$ satisfy $E(u,\G)<0$ (where $\overline{\lambda}$ is the number given by Lemma \ref{lem:J}). 
	
	To this end, we argue by contradiction. Assume that there exist exponents $p_n\to 6^+$ and functions $u_n\in\NN_{\overline{\lambda},p_n}(\G)$ such that $\displaystyle J_{\overline{\lambda},p_n}(u_n,\G)=\inf_{v\in\NN_{\overline{\lambda},p_n}(\G)}J_{\overline{\lambda},p_n}(v,\G)$ and $E(u_n,\G)\geq0$. Since $p_n$ is bounded and the proof of Lemma \ref{lem:J} shows that $\displaystyle \inf_{v\in\NN_{\overline{\lambda},p_n}(\G)}J_{\overline{\lambda},p_n}(v,\G)<\inf_{v\in\NN_{\overline{\lambda},p_n}(\R)}J_{\overline{\lambda},p_n}(v,\R)$, it follows that $(u_n)_n$ is bounded in $H^1(\G)$. Hence, up to subsequences  $u_n\rightharpoonup u$ in $H^1(\G)$ and $u_n\to u$ in $L_{loc}^\infty(\G)$ as $n\to+\infty$, for some $u\in H^1(\G)$. 
	
	Note first that $u\not\equiv0$ on $\G$. Indeed, if it were $u\equiv0$ on $\G$, we would have $u_n\to0$ in $L^\infty(\K)$. Therefore, denoting by $\HH$ the unique half-line of $\G$, it would follow
	\begin{equation}
	\label{eq:assurdo}
	\begin{split}
	\inf_{v\in\NN_{\overline{\lambda},p_n}(\G)}J_{\overline{\lambda},p_n}(v,\G)=J_{\overline{\lambda},p_n}(u_n,\G)=J_{\overline{\lambda},p_n}(u_n,\HH)+o(1)\geq \inf_{v\in \NN_{\overline{\lambda},p_n}(\R)} J_{\overline{\lambda},p_n}(v,\R)+o(1)
	\end{split}
	\end{equation}
	which is impossible by the proof of Lemma \ref{lem:J}.
	
	Arguing in a similar way, we also observe that $u_n$ converges strongly in $H^1(\HH)$ to $u$. Indeed, $u_n$ being a solution of \eqref{nlsG} with $\lambda=\overline{\lambda}$ implies that $u_n(x)=\varphi_{\overline{\lambda},p_n}(x+a_n)$ for every $x\in\HH$, where $\varphi_{\overline{\lambda},p_n}$ is the unique $H^1$ solution of \eqref{eq:nlsR} with $\lambda=\overline{\lambda}$ and $p=p_n$ (and $\rho=1$), and $a_n\in\R$ is a suitable number depending on $n$. It is then easy to see that 
	\begin{equation}
	\label{eq:an}
	\liminf_{n\to+\infty}a_n>-\infty\,,
	\end{equation}
	because if this were not the case we would obtain again a contradiction as in \eqref{eq:assurdo}. By \eqref{eq:an}, $p_n\to6^+$ and the properties of $\varphi_{\overline{\lambda},p_n}$, it thus follows that the restriction of $u_n$ to $\HH$ converges strongly in $H^1(\HH)$ to $\varphi_{\overline{\lambda},6}(\cdot+a)$, for a suitable $a\in\R\cup\{+\infty\}$ (if $a=+\infty$, then $u_n$ tends to 0 strongly in $H^1(\HH)$). By uniqueness of the limit, this means that $u(\cdot)\equiv\varphi_{\overline{\lambda},6}(\cdot+a)$ on $\HH$ and, in particular,
	\begin{equation}
	\label{eq:convH}
	\|u_n\|_{L^2(\HH)}\to\|u\|_{L^2(\HH)},\qquad\|u_n\|_{L^{p_n}(\HH)}\to\|u\|_{L^6(\HH)}\qquad\text{as }n\to+\infty\,.
	\end{equation}
	Moreover, since $u_n\to u$ in $L^\infty_{loc}(\G)$, then $u_n(x)\to u(x)$ almost everywhere on the compact core $\mathcal{K}$ as $n\to+\infty$. Since $\K$ is a set of finite measure by assumption and $u_n$ is uniformly bounded from above (by the uniform boundedness in $H^1(\G)$), by dominated convergence  we have
	\[
	\|u_n\|_{L^{p_n}(\K)}\to\|u\|_{L^6(\K)}\qquad\text{as }n\to+\infty\,.
	\]
	Together with \eqref{eq:convH}, the strong convergence of $u_n$ to $u$ in $L^2(\K)$ and weak lower semicontinuity, this yields
	\[
	\begin{split}
	&\|u_n\|_2\to\|u\|_2,\qquad\|u_n\|_{p_n}\to\|u\|_6\qquad\text{as }n\to+\infty\\
	&\liminf_{n\to+\infty}\|u_n'\|_2\geq\|u'\|_2\,,
	\end{split}
	\]
	that is
	\[
	\sigma:=\frac{\|u'\|_2^2+\overline{\lambda}\|u\|_2^2}{\|u\|_6^6}\leq\liminf_{n\to+\infty}\frac{\|u_n'\|_2^2+\overline{\lambda}\|u_n\|_2^2}{\|u_n\|_{p_n}^{p_n}}=1\,,
	\]
	the last equality being a consequence of $u_n\in\NN_{\overline{\lambda},p_n}(\G)$ for every $n$. Noting that $\sigma^{\frac14}u\in\NN_{\overline{\lambda},6}(\G)$ by definition, we obtain
	\[
	\begin{split}
	\inf_{v\in\NN_{\overline{\lambda},6}(\G)}J_{\overline{\lambda},6}(v,\G)\leq J_{\overline{\lambda},6}\left(\sigma^{\frac14}u\right)=&\,\left(\frac12-\frac16\right)\sigma^{\frac32}\|u\|_6^6\leq \liminf_{n\to+\infty}\left(\frac12-\frac1{p_n}\right)\|u_n\|_{p_n}^{p_n}\\
	=&\,\liminf_{n\to+\infty}\inf_{v\in\NN_{\overline{\lambda},p_n}(\G)}J_{\overline{\lambda},p_n}(v,\G)=\inf_{v\in\NN_{\overline{\lambda},6}(\G)}J_{\overline{\lambda},6}(v,\G),
	\end{split}
	\]
	where we exploited also the continuity of the map $p\mapsto\inf_{v\in\NN_{\overline{\lambda},p}(\G)}J_{\overline{\lambda},p}(v,\G)$. The previous chain of inequalities shows that $\sigma=1$, $u\in\NN_{\overline{\lambda},6}(\G)$, $J_{\overline{\lambda},6}(u,\G)=\inf_{v\in\NN_{\overline{\lambda},6}(\G)}J_{\overline{\lambda},6}(v,\G)$ and that the convergence of $u_n$ to $u$ is strong in $H^1(\G)$. By Lemma \ref{lem:J} above, \cite[Theorem 1.3]{DST_MA} and the aforementioned results of \cite{AST_CMP}, this implies that $\|u\|_2^2=\mu_\R$ and $E(u,\G)=\inf_{v\in H_{\mu_\R}^1(\G)}E(v,\G)<0$. By strong convergence in $H^1(\G)$, this entails $E(u_n,\G)<0$ for sufficiently large $n$, providing the contradiction we seek and concluding the proof.
\end{proof}

We can now conclude our analysis by proving Theorem \ref{thm:prop_p}.

\begin{proof}[Proof of Theorem \ref{thm:prop_p}]
Let $\G$ be a noncompact graph with finitely many edges, at least one of which bounded, and such that every vertex is attached at an even number (possibly zero) of half-lines. Observe first that, on a graph with these properties, for every $\lambda>0$ there exists a positive solution $u_\lambda$ of \eqref{nlsG}. Indeed, since for every vertex $\vv_i\in\V_\G$ there exists $k_i\in\N$ such that $\vv_i$ is attached to $2k_i$ half-lines, we can group these half-lines in pairs and think of them as $k_i$ full lines crossing at the same point $\vv_i$.  Then, identifying $\vv_i$ with the origin of each of these $k_i$ lines and denoting by $\varphi_\lambda$ the unique solution of the nonlinear Schr\"odinger equation \eqref{eq:nlsR} on $\R$ (with $\rho=1$), it is readily seen that the function
\[
u_\lambda(x):=\begin{cases}
	\lambda^{\frac1{p-2}} & \text{for }x\in\K\\
	\varphi_\lambda(x+\tau_\lambda) & \text{for }x\text{ on each of the $k_i$ lines attached to $\vv_i$, for every $\vv_i\in\V_\G$},
\end{cases}
\]
with $\K$ the compact core of $\G$ and $\tau_\lambda>0$ such that $\displaystyle\varphi_\lambda(\tau_\lambda)=\lambda^{\frac1{p-2}}$, solves \eqref{nlsG} on $\G$. Note that $\tau_\lambda$ exists because
		\begin{equation*}
			\| \varphi_{\lambda} \|_{\infty}
			= \varphi_{\lambda}(0)
			= \left( \frac{\lambda p}{2} \right)^{\frac{1}{p-2}}
			> \lambda^{\frac{1}{p-2}}.
		\end{equation*}
To complete the proof of Theorem \ref{thm:prop_p} we are thus left to show that, varying $\lambda$, $u_\lambda$ has negative energy as soon as its mass is sufficiently large. Since $u_\lambda$ is made of a constant function on the compact core of $\G$ and a certain number, say $k$, of full copies of $\varphi_\lambda$, we have
		\begin{align*}
			\| u_{\lambda} \|_{2}^{2}
			&= |\K| \lambda^{\frac{2}{p-2}} + k \| \varphi_{\lambda} \|_{2}^{2}
			= |\K| \lambda^{\frac{2}{p-2}} + k \lambda^{\frac{6-p}{2(p-2)}} \| \varphi_1 \|_{2}^{2}\,,\\
			\| u_{\lambda} \|_{p}^{p}
			&= |\K| \lambda^{\frac{p}{p-2}} + k \| \varphi_{\lambda} \|_{p}^{p} 
			= |\K| \lambda^{\frac{p}{p-2}} + k \lambda^{\frac{p+2}{2(p-2)}} \| \varphi_1 \|_{p}^{p}\,,\\
			\| u_{\lambda}' \|_{2}^{2}
			&= k \| \varphi'_{\lambda} \|_{2}^{2}
			= k \lambda^{\frac{p+2}{2(p-2)}} \| \varphi'_1 \|_{2}^{2}\,,
		\end{align*}
		where we used the well-known relation $\displaystyle	\varphi_{\lambda}(x) = \lambda^{\frac{1}{p-2}} \varphi_1\left(\sqrt{\lambda}x\right).$
		Therefore,
		\begin{equation*}
			E(u_{\lambda},\G)
			= \frac{1}{2}\| u_{\lambda}' \|_{2}^{2} - \frac{1}{p} \| u_{\lambda} \|_{p}^{p}
			= k \left( \frac{\| \varphi'_1 \|_{2}^{2}}{2} - \frac{\| \varphi_1 \|_{p}^{p}}{p} \right) \lambda^{\frac{p+2}{2(p-2)}}
			- \frac{|\K|}{p} \lambda^{\frac{p}{p-2}}.
		\end{equation*}
		Since $p > 2$, one has $\frac2{p-2}>0$ and $\frac{p}{p-2} > \frac{p+2}{2(p-2)}$.
		Hence, if $\lambda$ is large enough (depending on $k$, $|\K|$ and $p$), then $\|u_\lambda\|_2$ is large and $E(u_{\lambda},\G) < 0$.
\end{proof}

\section*{Acknowledgements}
S.D. acknowledges that this study was carried out within the project E53D23005450006 ``Nonlinear dispersive equations in presence of singularities'' -- funded by European Union -- Next Generation EU  within the PRIN 2022 program (D.D. 104 - 02/02/2022 Ministero dell'Università e della Ricerca). This manuscript reflects only the author's views and opinions and the Ministry cannot be considered responsible for them. The work has been partially supported by the INdAM GNAMPA project 2023 “Modelli nonlineari
in presenza di interazioni puntuali”. This work has been carried out in the framework of the Project NQG (ANR-23-CE40-0005-01), funded by the French National Research Agency (ANR).
L. Jeanjean thank the ANR for its support.

\end{document}